\newcommand{\GI}[2][]{\sidenote[colback=yellow!20]{\textbf{GI\xspace #1:} #2}}
\newcommand{\eff}{\mathrm{eff}}
\newcommand{\tmix}{t_\mathrm{mix}}
\newcommand{\e}{\bm{e}}
\newcommand{\cyl}{\mathcal{C}}
\newcommand{\zerotuple}{{\bm{0}}}
\DeclareMathOperator{\dist}{dist}
\DeclareMathOperator{\cov}{cov}
\DeclareMathOperator{\unif}{unif}
\newtheorem{assumption}[theorem]{Assumption}
\begin{document}
\title[Residual diffusivity]{Residual Diffusivity for Noisy Bernoulli Maps}
\begin{abstract}
  Consider a discrete time Markov process~$X^\epsilon$ on~$\R^d$ that makes a deterministic jump prescribed by a map~$\varphi \colon \R^d \to \R^d$, and then takes a small Gaussian step of variance~$\epsilon^2$.
  For certain chaotic maps~$\varphi$, the \emph{effective diffusivity} of~$X^\epsilon$ may be bounded away from~$0$ as~$\epsilon \to 0$.
  This is known as \emph{residual diffusivity},
  and in this paper we prove residual diffusivity occurs for a class of maps~$\varphi$ obtained from piecewise affine expanding Bernoulli maps.
\end{abstract}
\thanks{This work has been partially supported by the National Science Foundation under grants
  DMS-2108080 DMS-2406853,
  and the Center for Nonlinear Analysis.}
\subjclass{%
  Primary:
    60J05. 
  Secondary:
    37A25, 
    35B27. 
  }
\keywords{residual diffusion, homogenization}
\author[Iyer]{Gautam Iyer}
\address{%
  Department of Mathematical Sciences, Carnegie Mellon University, Pittsburgh, PA 15213.
}
\email{gautam@math.cmu.edu}
\author[Nolen]{James Nolen}
\address{%
  Department of Mathematics, Duke University, Durham, NC 27708.
}
\email{nolen@math.duke.edu}
\maketitle
\section{Introduction}\label{s:intro}

\subsection{Main Result}

The goal of this paper is to study the long time behavior of the Markov process $\{X^\epsilon_n\}_{n\geq 0}$ on $\R^d$ defined by
\begin{equation}\label{e:Xn}
  X^\epsilon_{n+1} = \varphi(X^\epsilon_n) + \epsilon \xi_{n+1} \,.
\end{equation}
Here $\{\xi_n\}_{n\geq 1}$ is a family of independent standard Gaussian random variables, and~$\varphi \colon \R^d \to \R^d$ is a Lebesgue measure preserving map with a periodic displacement (i.e.\ the function~$x \mapsto \varphi(x) - x$ is~$\Z^d$ periodic).
Since~$\varphi$ has a periodic displacement, it projects to a well-defined map~$\tilde \varphi \colon \T^d \to \T^d$ defined by
\begin{equation}
  \tilde \varphi(x) = \varphi(x) \pmod{\Z^d}
  \,.
\end{equation}

\emph{Residual diffusivity} is the phenomenon that the asymptotic variance (i.e.\ the limit as~$n \to \infty$ of~$\var(\e_i \cdot X_n) / n$) remains bounded and non-degenerate as~$\epsilon \to 0$.
It has been conjectured~\cite{Taylor21,BiferaleCrisantiEA95,MurphyCherkaevEA17} to occur in certain situations
when~$\tilde \varphi \colon \T^d \to \T^d$ is ``chaotic''.
The main result of this paper shows that the processes~$X^\epsilon$ exhibit residual diffusion when~$\varphi$ is obtained from a piecewise affine linear, expanding Bernoulli map (see Section~\ref{s:bernoulli}, below).
To the best of our knowledge, this is the only chaotic map for which residual diffusion has been rigorously proved.

\begin{theorem}\label{t:residual-diffusivity}
  Suppose~$\varphi$ is obtained from an expanding, piecewise affine linear, Bernoulli map satisfying the conditions in Assumption~\ref{a:phi}, below.
  There exists a constant $c > 0$ such that for all $v \in \R^d$, and all initial distributions~$\mu$ we have
  \begin{equation}\label{e:residual-diffusivity}
    \liminf_{\epsilon \to 0}
      \lim_{n \to \infty} \frac{\var^\mu( v \cdot X^\epsilon_n )}{n}
      \geq c \var\paren[\big]{ v \cdot \floor{\varphi(U)} \given U \sim \unif( [0, 1)^d ) } 
      \,.
  \end{equation}
\end{theorem}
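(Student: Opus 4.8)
The plan is to show that the asymptotic variance converges, as $\epsilon\to0$, to the asymptotic variance of the Birkhoff sums of the integer‑jump function under the undisturbed Bernoulli map, and that the Bernoulli structure makes the latter equal to $\var_\lambda(v\cdot\floor{\varphi})$. Throughout, $\lambda$ denotes Lebesgue measure on $[0,1)^d\cong\T^d$, $L(x):=v\cdot\floor{\varphi(x)}$, and $\tilde X^\epsilon$ is the projection of $X^\epsilon$ to $\T^d$, a Markov chain preserving $\lambda$. A few reductions come first. If the right side of~\eqref{e:residual-diffusivity} vanishes there is nothing to prove. Writing $V:=\varphi-\mathrm{id}$, which is bounded and $\Z^d$‑periodic, one has $v\cdot(X^\epsilon_n-X^\epsilon_0)=\sum_{k<n}v\cdot V(\tilde X^\epsilon_k)+\epsilon\sum_{k<n}v\cdot\xi_{k+1}$, a quantity of finite variance, so if $\mu$ has infinite second moment the left side of~\eqref{e:residual-diffusivity} is $+\infty$ and we are done; thus we may assume $\var_\lambda(L)>0$ and $\mu$ of finite second moment. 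Since $\varphi$ has $\Z^d$‑periodic displacement, $\floor{X^\epsilon_{n+1}}=\floor{X^\epsilon_n}+J_{n+1}$ with $J_{n+1}:=\floor{\varphi(\tilde X^\epsilon_n)+\epsilon\xi_{n+1}}\in\Z^d$, and a short computation (in which the Gaussian terms cancel) yields the exact identity $v\cdot X^\epsilon_n=v\cdot\floor{X^\epsilon_0}+\sum_{k=0}^{n-1}v\cdot J_{k+1}+v\cdot\tilde X^\epsilon_n$; the two boundary terms change $\var^\mu(v\cdot X^\epsilon_n)$ by $O(1)$ plus a cross term that is $o(n)$ by Cauchy--Schwarz. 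Finally I would establish that the transfer operator of $\tilde X^\epsilon$ has a spectral gap on a suitable Banach space of bounded‑variation type, with constants uniform for small $\epsilon$ --- the Lasota--Yorke inequality together with Keller--Liverani stability for the randomly perturbed piecewise affine expanding map of Assumption~\ref{a:phi}. Then $\sigma^2(\epsilon):=\lim_{n\to\infty}\frac1n\var_\lambda\bigl(\sum_{k<n}v\cdot J_{k+1}\bigr)$ exists, is independent of the initial law, and it suffices to prove $\liminf_{\epsilon\to0}\sigma^2(\epsilon)\geq c\,\var_\lambda(L)$.

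Next I would write $\sigma^2(\epsilon)$ in Green--Kubo form. As $v\cdot J_{k+1}$ is a function of $\tilde X^\epsilon_k$ together with the fresh increment $\xi_{k+1}$, stationarity and summability of correlations give $\sigma^2(\epsilon)=\sum_{\ell\in\Z}\kappa_\epsilon(|\ell|)$ with $\kappa_\epsilon(\ell)=\cov\bigl(v\cdot J_1,\,v\cdot J_{\ell+1}\bigr)$ under $\tilde X^\epsilon_0\sim\lambda$; since $L$ (hence each $v\cdot J_{k+1}$, as a function of $\tilde X^\epsilon_k$) lies in the bounded‑variation class, the uniform spectral gap yields $|\kappa_\epsilon(\ell)|\leq C\rho^{\ell}$ with $C$ and $\rho<1$ independent of $\epsilon$. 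For fixed $\ell$, as $\epsilon\to0$ the perturbed trajectory converges to the orbit of $\tilde\varphi$ for $\lambda$‑a.e.\ starting point and $\floor{\varphi(x)+\epsilon\xi}\to\floor{\varphi(x)}$ for $\lambda$‑a.e.\ $x$, so by dominated convergence $\kappa_\epsilon(\ell)\to\cov_\lambda(L,L\circ\tilde\varphi^{\ell})$; combined with the uniform geometric bound, dominated convergence in $\ell$ gives
\begin{equation*}
  \liminf_{\epsilon\to0}\sigma^2(\epsilon)\ \geq\ \sigma_0^2:=\var_\lambda(L)+2\sum_{\ell\geq1}\cov_\lambda\bigl(L,L\circ\tilde\varphi^{\ell}\bigr)\ =\ \lim_{n\to\infty}\tfrac1n\var_\lambda\Bigl(\textstyle\sum_{k<n}L\circ\tilde\varphi^{k}\Bigr),
\end{equation*}
the asymptotic variance of the Birkhoff sums of $L$ for the undisturbed map (which one checks is also the value at $\epsilon=0$ of the left side of~\eqref{e:residual-diffusivity}).

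It remains to bound $\sigma_0^2$ below by $\var_\lambda(L)$, and here the Bernoulli hypothesis enters. Under Assumption~\ref{a:phi} the defining partition is a full‑branch Markov partition, so under $\lambda$ the sequence of branches visited by $\tilde\varphi^k(x)$ is i.i.d.; moreover $\floor{\varphi}$ is constant on each branch (equivalently, each branch is mapped by $\varphi$ onto a translate of $[0,1)^d$), so $L\circ\tilde\varphi^k=\ell(i_k)$ for a fixed function $\ell$ of the symbol $i_k$. Thus the Birkhoff sums of $L$ are sums of i.i.d.\ increments, every covariance with $\ell\geq1$ in the display above vanishes, and $\sigma_0^2=\var_\lambda(L)$; hence~\eqref{e:residual-diffusivity} holds with $c=1$. (This is sharp in the following sense: if $L$ were cohomologous to a constant one would have $\sigma_0^2=0$ while $\var_\lambda(L)>0$, so some non‑degeneracy of the integer jumps must be built into Assumption~\ref{a:phi}, and the i.i.d.\ structure above is exactly what supplies it.)

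The main obstacle is the uniform spectral gap: controlling the transfer operator of the \emph{perturbed} map as $\epsilon\to0$, on the correct function space. The discontinuities of $x\mapsto\floor{\varphi(x)}$, together with the fact that the deterministic corrector equation $(I-\mathcal K)g=L-\lambda(L)$ has no $L^2$ solution (the Koopman operator $\mathcal K$ of $\tilde\varphi$ being an isometry of $L^2$), are precisely what force the bounded‑variation space and the uniform‑in‑$\epsilon$ bookkeeping; granted those estimates, the Green--Kubo manipulation and the Bernoulli‑shift computation are routine.
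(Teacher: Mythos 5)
Your overall architecture is different from the paper's: you aim for a Green--Kubo identity for the stationary chain, a uniform-in-$\epsilon$ spectral gap via Lasota--Yorke plus Keller--Liverani, term-by-term convergence of the correlations $\kappa_\epsilon(\ell)$ as $\epsilon\to0$, and then the i.i.d.\ symbol computation to evaluate the deterministic limit. The last ingredient is correct and is exactly the computation the paper performs in Lemma~\ref{l:DwCheck} (the integer parts $\floor{X^0_n(U)}$ form a random walk with i.i.d.\ increments, so all lagged covariances vanish and the limit is $\var(v\cdot\floor{\varphi(U)})$). The displacement identity $v\cdot X^\epsilon_n=v\cdot\floor{X^\epsilon_0}+\sum_k v\cdot J_{k+1}+v\cdot\tilde X^\epsilon_n$ and the reduction to the stationary law are also fine.

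The genuine gap is the step you yourself flag as ``the main obstacle'' and then treat as granted: the uniform bound $|\kappa_\epsilon(\ell)|\leq C\rho^{\ell}$ with $C,\rho$ independent of $\epsilon$. This is not a routine technicality here; it is essentially the open difficulty of the problem. If such a bound held, summing the Green--Kubo series would immediately give $\lim_n \var(v\cdot X^\epsilon_n)/n\leq \sum_\ell|\kappa_\epsilon(\ell)|\leq C$ uniformly in $\epsilon$, i.e.\ the sharp $O(1)$ upper bound and, together with your term-by-term limit, the full conjectured identity~\eqref{e:RDiffChaotic}. The paper explicitly cannot prove that upper bound --- Proposition~\ref{p:var-upper-intro} only achieves $O(\abs{\ln\epsilon})$, and the remark following Proposition~\ref{p:var-gen-upper} identifies exactly this uniform decorrelation of increments as the estimate that ``seems challenging to prove rigorously.'' So your proposal silently assumes a statement strictly stronger than the theorem being proved. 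In $d=1$ a Baladi--Young/Keller--Liverani stability argument for the convolved transfer operator is plausible (and would then genuinely improve on the paper), but under Assumption~\ref{a:phi} the result is claimed for all $d$ with only cube partitions and possibly weak expansion, where uniform Lasota--Yorke inequalities on BV-type or quasi-H\"older spaces are delicate and not off-the-shelf; none of this is supplied.

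The paper's route is designed precisely to avoid needing any decorrelation or corrector bounds: it proves a one-sided Kipnis--Varadhan inequality (Proposition~\ref{p:KV-variance} and Corollary~\ref{c:KV-stopped}) showing that a Doeblin minorization of the transition density by a cube-constant kernel $w$ forces $\lim_n\var(v\cdot Y_n)/n\geq\beta\, v\cdot\bar{\mathcal D}_w v$, and then verifies the minorization for an auxiliary, integer-shifted, randomly time-changed process $Z^\epsilon$ after $2+\theta^\epsilon$ steps (Lemma~\ref{l:p2z-doeblin}). The price of that detour is the unspecified constant $c$ in~\eqref{e:residual-diffusivity}; the payoff is that no spectral information about the perturbed operator is required. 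To salvage your approach you would need to actually prove the uniform spectral gap (or replace it with a one-sided device of the paper's kind), since without it neither the summability of the series nor the interchange of $\epsilon\to0$ with the sum over $\ell$ is justified.
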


We use $\var^\mu(v \cdot X^\epsilon_n)$ to denote the variance of~$v \cdot X^\epsilon_n$ given~$X^\epsilon_0 \sim \mu$.
For $x \in \R^d$ the notation $\floor{x}$ used above denotes the unique $n \in \Z^d$ such that $x \in n + [0, 1)^d$.
Unless $\floor{\varphi(x)}$ is a constant for (almost every) $x \in [0,1)^d$, the right side of \eqref{e:residual-diffusivity} will be positive.  A heuristic argument, supported by numerics, indicates that the bound~\eqref{e:residual-diffusivity} is sharp.
While we are currently unable to prove a matching upper bound, we can prove an upper bound that grows like~$O(\abs{\ln \epsilon})$ as~$\epsilon \to 0$.

\begin{proposition}\label{p:var-upper-intro}
  Let~$\varphi$ be the same map from Theorem~\ref{t:residual-diffusivity}.
  There exists a constant~$C < \infty$ such that for all~$v \in \R^d$ and all sufficiently small~$\epsilon > 0$ we have
  \begin{equation}\label{e:residual-diffusivity-upper}
      \lim_{n \to \infty} \frac{\var( v \cdot X^\epsilon_n )}{n}
      \leq C \abs{\ln \epsilon} \abs{v}^2
      \,.
  \end{equation}
\end{proposition}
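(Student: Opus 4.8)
The plan is to project onto the torus. Set $\psi(x) := \varphi(x) - x$, which is $\Z^d$-periodic by hypothesis and bounded since $\varphi$ is piecewise affine with finitely many pieces, and put $g := v\cdot\psi$, so that $g\in L^\infty(\T^d)$ with $\|g\|_{L^\infty}\le C\abs{v}$. Iterating \eqref{e:Xn} gives
\begin{equation*}
  v\cdot X^\epsilon_n - v\cdot X^\epsilon_0
    = \sum_{k=0}^{n-1} g(\tilde X^\epsilon_k) \;+\; \epsilon\sum_{k=1}^{n} v\cdot\xi_k
    \;=:\; S_n + \epsilon Z_n \,,
\end{equation*}
where $\tilde X^\epsilon$ is the projection of $X^\epsilon$ to $\T^d$ and $Z_n$ is Gaussian with $\var(Z_n) = n\abs{v}^2$. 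Since $\abs{\cov(S_n,Z_n)} \le (\var(S_n)\var(Z_n))^{1/2}$, a bound of the form \eqref{e:residual-diffusivity-upper} follows from one on $\limsup_{n\to\infty}\var(S_n)/n$. Once $\tilde X^\epsilon$ is shown to be geometrically ergodic, standard Markov-chain limit theory makes this $\limsup$ an actual limit, independent of the initial law, so I may take $\tilde X^\epsilon_0 \sim \pi$, the stationary law; and $\pi$ is Lebesgue measure on $\T^d$, because $\tilde\varphi$ preserves Lebesgue measure (as $\varphi$ does and has periodic displacement) and convolution with the wrapped Gaussian preserves it too.

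Writing $P_\epsilon$ for the transition operator on $L^2(\T^d) = L^2(\T^d,\pi)$ and $\bar g := g - \int_{\T^d} g$, summing the stationary two-point correlations yields the Green--Kubo identity
\begin{equation*}
  \lim_{n\to\infty}\frac{\var(S_n)}{n}
    = \|\bar g\|_{L^2}^2 + 2\sum_{k\ge 1}\langle\bar g,\,P_\epsilon^k\bar g\rangle \,,
\end{equation*}
valid as soon as the series is absolutely convergent. So Proposition~\ref{p:var-upper-intro} reduces to proving
\begin{equation}\label{e:corr-sum}
  \sum_{k\ge 0}\bigl\lvert\langle\bar g,\,P_\epsilon^k\bar g\rangle\bigr\rvert \;\le\; C\abs{\ln\epsilon}\abs{v}^2
\end{equation}
for all small $\epsilon$, which also justifies the use of the identity.

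To obtain \eqref{e:corr-sum} I would use a uniform minorization on the time scale $\abs{\ln\epsilon}$: there are $\beta\in(0,1]$ and an integer $k_\epsilon\le C\abs{\ln\epsilon}$, with $\beta$ \emph{independent} of $\epsilon$, such that $P_\epsilon^{k_\epsilon}(x,\cdot)\ge\beta\,\pi$ for every $x\in\T^d$. Given this, $P_\epsilon^{k_\epsilon}$ has Dobrushin coefficient at most $1-\beta$, so every $\pi$-mean-zero $h$ obeys $\|P_\epsilon^{m k_\epsilon} h\|_{L^\infty}\le 2(1-\beta)^m\|h\|_{L^\infty}$; combined with the fact that $P_\epsilon$ is an $L^\infty$-contraction, this gives $\|P_\epsilon^k\bar g\|_{L^\infty}\le 2(1-\beta)^{\lfloor k/k_\epsilon\rfloor}\|\bar g\|_{L^\infty}$ for all $k\ge 0$. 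Therefore
\begin{equation*}
  \sum_{k\ge 0}\bigl\lvert\langle\bar g,\,P_\epsilon^k\bar g\rangle\bigr\rvert
    \le 2\|\bar g\|_{L^1}\|\bar g\|_{L^\infty}\sum_{k\ge 0}(1-\beta)^{\lfloor k/k_\epsilon\rfloor}
    = \frac{2 k_\epsilon}{\beta}\,\|\bar g\|_{L^1}\|\bar g\|_{L^\infty}
    \le C\abs{\ln\epsilon}\abs{v}^2 \,,
\end{equation*}
using $\|\bar g\|_{L^1}\le\|\bar g\|_{L^\infty}\le 2\|g\|_{L^\infty}\le C\abs{v}$, which is \eqref{e:corr-sum}. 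This argument uses nothing beyond geometric ergodicity with relaxation time of order $\abs{\ln\epsilon}$, and it is exactly this slow relaxation that produces the logarithm; removing it would seem to require a genuinely spectral argument, which is why we fall short of the conjecturally sharp $O(\abs{v}^2)$ bound.

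The main obstacle is the minorization itself. One step has transition density $p_1^\epsilon(x,y) = G_\epsilon(y-\varphi(x))$, where $G_\epsilon$ is the variance-$\epsilon^2$ Gaussian density wrapped onto $\T^d$; this is at least $c\epsilon^{-d}$ on a ball of radius $\asymp\epsilon$ about $\varphi(x)$. Since $\varphi$ is piecewise affine with each linear part expanding by a factor $\ge\lambda>1$ and is measure preserving, I would show that one step carries such a ``good set'' to one on which a comparable bound $c'\epsilon^{-d}$ still holds and which has been dilated by a factor $\asymp\lambda$ in every direction --- until its size is comparable to that of $\T^d$ --- while the full-branch (Bernoulli) structure forces the portion meeting the discontinuity set of $\varphi$ to spread over all of $\T^d$ rather than concentrate. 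After $k_\epsilon\asymp\log_\lambda(1/\epsilon)\asymp\abs{\ln\epsilon}$ steps the good set covers $\T^d$, so $p_{k_\epsilon}^\epsilon(x,y)\ge c$ uniformly in $x$ and $y$, giving the minorization with a constant independent of $\epsilon$. The one-dimensional constant-slope model $\varphi(x) = Nx$ is instructive and can be done explicitly: there $p_k^\epsilon(x,\cdot)$ is a wrapped Gaussian about $N^k x$ of variance $\tfrac{N^{2k}-1}{N^2-1}\epsilon^2$, uniformly bounded below once $N^k\epsilon$ exceeds an absolute constant. The difficulty in general is to retain quantitative control of the geometry under the interaction between the expansion and the discontinuities of $\varphi$ --- ensuring the mass genuinely equidistributes on this time scale rather than remaining trapped in thin, strongly anisotropic sets --- and it is here that the piecewise-affine and full-branch hypotheses of Assumption~\ref{a:phi} are needed. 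Equivalently, this minorization is a bound of the form $\ttmix^\epsilon\le C\abs{\ln\epsilon}$ of the type established earlier in the paper.
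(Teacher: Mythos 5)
Your argument is correct and follows essentially the same route as the paper: both reduce the asymptotic variance to a sum of increment autocovariances that decay geometrically with period equal to the mixing time of the projected chain on $\T^d$, and both obtain the $\abs{\ln \epsilon}$ factor from the bound $\tmix(\tilde X^\epsilon) \le C \abs{\ln \epsilon}$, which the paper imports from \cite{IyerLuEA24} rather than reproving --- just as you defer the Doeblin minorization. The only cosmetic differences are that the paper works with the increments $\Delta_k = X^\epsilon_{k+1} - X^\epsilon_k$ directly from an arbitrary starting point (Lemma~\ref{l:cov-decay-delta} and Proposition~\ref{p:var-gen-upper}) rather than splitting off the Gaussian part and invoking the stationary Green--Kubo identity plus independence of the limit from the initial law.
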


\subsection{Motivation and Literature review}

Our interest in this problem stems from understanding the long time behavior of diffusions with a chaotic (but periodic) drift.
That is, consider the continuous time diffusion process $X^\epsilon_t$ defined by the SDE
\begin{equation}\label{e:SDEX}
  d X^\epsilon_t = u(X^\epsilon_t) \, dt + \epsilon \, dW_t
  \quad\text{on } \R^d
  \,,
\end{equation}
where $u \colon \R^d \to \R^d$ is a periodic and divergence-free vector field.
One physical situation where this is relevant is in the study of diffusive tracer particles being advected by an incompressible fluid.

On small (i.e.\ $O(1)$) time scales, it the process of~$X^\epsilon$ stays close to the deterministic trajectories of~$u$, and a large deviations principle can be established (see for instance~\cite{FreidlinWentzell12}).
On intermediate (i.e.\ $O(\abs{\ln \epsilon}/\epsilon^\alpha)$ for~$\alpha \in [0, 2)$) time scales certain non-Markovian effects arise and lead to anomalous diffusion~\cite{
  Young88,
  YoungPumirEA89,
  Bakhtin11,
  HairerKoralovEA16,
  HairerIyerEA18
}.
On long (i.e.\ $O(1/\epsilon^2)$) time scales, homogenization occurs and the net effect of the drift can be averaged and the process~$X^\epsilon$ can be approximated by a Brownian motion with covariance matrix~${D^\epsilon_\eff}$ called the \emph{effective diffusivity}.
This was first studied in this setting by Freidlin~\cite{Freidlin64}, and is now the subject of many standard books with several important applications~\cite{BensoussanLionsEA78,PavliotisStuart08}.

The effective diffusivity matrix ${D^\epsilon_\eff}$ is formally given by
\begin{equation}
  \e_i {D^\epsilon_\eff} \e_j
     = \lim_{t \to \infty} \frac{ \cov( \e_i \cdot X^\epsilon_t, \e_j \cdot X^\epsilon_t ) }{t}
     \,,
\end{equation}
where~$\e_i \in \R^d$ is the $i^\text{th}$ standard basis vector.
This, however, is hard to compute explicitly and authors usually characterize it in terms of a cell problem on $\T^d$.
In a few special situations (such as shear flows, or cellular flows) the asymptotic behavior of~${D^\epsilon_\eff}$ as~$\epsilon \to 0$ is known~\cite{
  Taylor53,
  ChildressSoward89,
  FannjiangPapanicolaou94,
FannjiangPapanicolaou97,
MajdaKramer99,
  Heinze03,
  Koralov04,
NovikovPapanicolaouEA05,
  RyzhikZlatos07
}.

The motivation for the present paper comes from thinking about the case when the deterministic flow of~$u$ is chaotic or exponentially mixing on the torus (see~\cite{SturmanOttinoEA06}).
In this case it has been conjectured that~${D^\epsilon_\eff}$ is~$O(1)$ as~$\epsilon \to 0$, a phenomenon known as ``residual diffusivity''.
Study of this was initiated by Taylor~\cite{Taylor21} over 100 years ago, and has since been extensively studied by many authors~\cite{
  ZaslavskyStevensEA93,
  BiferaleCrisantiEA95,
  MajdaKramer99,
  Zaslavsky02,
  MurphyCherkaevEA17
}.
While this has been confirmed numerically and studied for elephant random walks~\cite{
  LyuXinEA17,
  LyuXinEA18,
  MurphyCherkaevEA20,
  WangXinEA21,
  WangXinEA22,
  LyuWangEA22,
  KaoLiuEA22
},
a rigorous proof is of this in even one example is still open.

We now present a heuristic explanation as to why residual diffusivity is expected.
As before, let~$\tilde X^\epsilon$ be the projection of~$X^\epsilon$ to the torus~$\T^d$, and let~$T_\epsilon = \tmix(\tilde X^\epsilon)$ be the \emph{mixing time} of~$\tilde X^\epsilon$ on the torus~$\T^d$.
In general, due to the Gaussian noise, we are guaranteed~$T_\epsilon \leq O( 1/\epsilon^2)$.
The chaotic dynamics of~$\varphi$ may cause \emph{enhanced dissipation}, and reduce~$T_\epsilon$ significantly (see for instance~\cite{
  FannjiangWoowski03,
  FannjiangNonnenmacherEA04,
  FannjiangNonnenmacherEA06,
  ConstantinKiselevEA08,
  FengIyer19,
  CotiZelatiDelgadinoEA20,
  ElgindiLissEA23,
  IyerLuEA24,
  TaoZworski24
}).
However, for reasons that will be explained shortly, having~$T_\epsilon \ll 1/\epsilon^2$ does not necessarily imply residual diffusion.

Now, after the mixing time~$T_\epsilon$ the distribution of~$\tilde X^\epsilon$ is close to the uniform distribution on~$\T^d$.
Thus, it is reasonable to expect that the distribution of~$X^\epsilon$ can be approximated by a linear combination of indicator functions of unit cubes in~$\R^d$ whose vertices lie in the integer lattice~$\Z^d$ (see Figure~\ref{f:Nepsilon}, right).
By the Markov property this should imply that for every~$v \in \R^d$ we have
\begin{equation}\label{e:varXTep}
  \lim_{t \to \infty} \frac{\var(v \cdot X_t^\epsilon)}{t}
    = \frac{ \var(v \cdot X^\epsilon_{T_\epsilon} \given X^\epsilon_0 \sim \unif( [0, 1)^d ) }{T_\epsilon}
    \,.
\end{equation}

Let~$X^0$ be the (deterministic) solution to~\eqref{e:SDEX} with~$\epsilon = 0$.
If we knew
\begin{equation}\label{e:XepMinusX0}
  \abs{X^\epsilon_{T_\epsilon} - X^0_{T_\epsilon}} = o(T_\epsilon)
\end{equation}
then
\begin{equation}
  \frac{ \var(v \cdot X^\epsilon_{T_\epsilon} \given X^\epsilon_0 \sim \unif( [0, 1)^d ) }{T_\epsilon}
  \approx
  \frac{ \var(v \cdot X^0_{T_\epsilon} \given X^0_0 \sim \unif( [0, 1)^d ) }{T_\epsilon}
  \,.
\end{equation}
If the deterministic flow of~$u$ is chaotic and if~$X^0_0 \sim \unif( [0, 1)^d )$ then~$X^0_n$ should behave like a random walk after~$n$ steps.
Since the variance of a random walk grows linearly with time, we expect
\begin{equation}\label{e:rdiff-cts}
  \lim_{\epsilon \to 0} \lim_{t \to \infty} \frac{\var(v \cdot X_t^\epsilon)}{t}
    = \var(v \cdot X^0_{1} \given X^0_0 \sim \unif( [0, 1)^d )
    \,.
\end{equation}

Unfortunately the above heuristic has not been proved for even one example, and there are two main technical obstructions to making it rigorous.
First, after time~$T_\epsilon$, even though the density of~$\tilde X^\epsilon_{T_\epsilon}$ is roughly uniform on~$\T^d$, the density of~$X^\epsilon_{T_\epsilon}$ need not be well approximated by a linear combination of indicator functions of unit cubes with vertices on the integer lattice.
Second, the distance between~$X^\epsilon_t$ and~$X^0_t$ may grow exponentially with~$t$, and the bound~\eqref{e:XepMinusX0} need not hold.
\medskip

The goal of this paper is to rigorously exhibit residual diffusivity in a simple setting.
In continuous time, examples of exponentially mixing flows are not easy to construct.
The canonical example of an exponentially mixing flows is the geodesic flow on the unit sphere bundle of negatively curved manifolds~\cite{Dolgopyat98}. On the~$3$-torus, however, the existence of a divergence free, $C^1$, time independent, exponentially mixing velocity field is an open question.
To the best of our knowledge, there are only examples of lower regularity~\cite{ElgindiZlatos19}, and several time dependent examples~\cite{Pierrehumbert94,BedrossianBlumenthalEA19,MyersHillSturmanEA22,BlumenthalCotiZelatiEA23,ElgindiLissEA23,ChristieFengEA23}.

On the other hand, there are several simple, well known, examples of regular, Lebesgue measure preserving, exponentially mixing dynamical systems on the torus~\cite{SturmanOttinoEA06}.  Therefore, instead of studying a continuous-time system, we study the discrete time system~\eqref{e:Xn}, and choose~$\varphi$ so that its projection to the torus generates an exponentially mixing dynamical system.
Such systems are interesting in their own right, and various aspects of them have been extensively studied~\cite{
  FannjiangWoowski03,
  FannjiangNonnenmacherEA04,
  ThiffeaultChildress03,
  FengIyer19,
  OakleyThiffeaultEA21,
  IyerLuEA24
}.

In this time-discrete setting, the analog of~\eqref{e:rdiff-cts} is
\begin{equation}\label{e:RDiffChaotic}
  \lim_{\epsilon \to 0}
    \lim_{n \to \infty} \frac{\var^x( v \cdot X^\epsilon_n )}{n}
      = \var\paren[\big]{ v \cdot \floor{\varphi(U)} \given U \sim \unif( [0, 1)^d ) }
    \,.
\end{equation}
Our main result (Theorem~\ref{t:residual-diffusivity}) establishes a lower bound that obtains~\eqref{e:RDiffChaotic} to up to a constant factor.
For the upper bound we will prove the discrete time version of~\eqref{e:varXTep}.
However, the best estimate we can presently obtain on the right hand side of~\eqref{e:varXTep} is suboptimal, leading to the logarithmically growing upper bound in Proposition~\ref{p:var-upper-intro}.

\subsection*{Plan of this paper}
In Section~\ref{s:bernoulli} we describe piecewise linear expanding Bernoulli maps, and precisely state the assumptions required for Theorem~\ref{t:residual-diffusivity}.
Next, in Section~\ref{s:variance-upper} we prove a general upper bound for the asymptotic variance of Markov processes with a transition density that is invariant under~$\Z^d$ shifts.
We use this bound to prove the upper bound Proposition~\ref{p:var-upper-intro}.
The proof of the lower bound can be broken down into two steps.
The first step is to prove a lower bound for the asymptotic variance of Markov processes whose transition density can be suitably minorized.
The second step is to prove that the process we study satisfies this minorization condition.
Section~\ref{s:lower} states these two steps precisely, and proves Theorem~\ref{t:residual-diffusivity} assuming these steps.
Section~\ref{s:KV} uses an argument of Kipnis and Varadhan to prove the first step, and finally Section~\ref{s:minorizing} proves the required minorization condition for the processes~$X^\epsilon$.

\subsection*{Acknowledgements}

The authors would like to thank Jack Xin and Albert Fannjiang for helpful discussions.

\section{Piecewise Affine Linear Expanding Bernoulli Maps.}\label{s:bernoulli}

We begin by precisely describing the piecewise affine linear, expanding Bernoulli map~$\varphi:\R^d \to \R^d$, and stating the assumptions required for Theorem~\ref{t:residual-diffusivity}.
Partition $\R^d$ into unit cubes $\set{Q_k \st k \in \Z^d}$, where $Q_k = k + [0, 1)^d$.
Let $M \geq 2$ and $E_1, \dots, E_M \subseteq Q_0$ be a partition of $Q_0$, with
\begin{equation}
  \abs{E_1} \leq \abs{E_2} \cdots \leq \abs{E_M}
  \,.
\end{equation}
For each $i \in \set{1, \dots, M}$ let~$\varphi_i:E_i \to \R^d$ be an affine linear map of the form
\begin{equation}\label{e:Phi-i-Def}
  \varphi_i(x) = \frac{\mathcal O_i x}{\abs{E_i}^{1/d}} + o_i\,, \quad \forall \; x \in E_i,
\end{equation}
for some vectors~$o_i \in \R^d$, and orthogonal matrices~$\mathcal O_i$ (one example of such a map is shown in Figure~\ref{f:phi}).
Given $x \in \R^d$ we let $n = \floor{x}$ denote the unique element in $\Z^d$ such that $x \in Q_n = n + Q_0$, and define
\begin{equation}\label{e:PhiDef}
  \varphi(x) = n + \varphi_i(x - n) 
  \quad \text{if } x - n \in E_i
  \,.
\end{equation}
The function $x \mapsto \varphi(x) - x$ is $\Z^d$ periodic, and~$\varphi$ is Lebesgue measure preserving.
We now precisely describe the assumptions required for Theorem~\ref{t:residual-diffusivity}.
\begin{assumption}\label{a:phi}
  Let~$\varphi\colon \R^d \to \R^d$ be the affine linear expanding map defined by~\eqref{e:PhiDef}, and assume that the following conditions hold.
  \begin{enumerate}[(1)]
    \item 
      If $d = 1$, then each $E_i$ is an interval.
      If $d > 1$, then each $E_i$ is a non-degenerate $d$-dimensional cube with edges parallel to the coordinate axes (as in the left figure in Figure~\ref{f:phi}).
    \item\label{a:int-cube}
      For each~$i \in \set{1, \dots, M}$ there exists~$\sigma_0(i) \in \Z^d$ such that $\varphi \colon \mathring E_i \to \mathring Q_{\sigma_0(i)}$ is a bijection. ($\mathring E$ denotes the interior of a set $E$.) 
    \item
      \label{a:boundary}
      For every $x \in \partial Q_0$, there exists $k \in \Z^d$, $y \in \mathring Q_k$ such that $\varphi(y) = x$.
  \end{enumerate}
\end{assumption}

\begin{remark}
  The right hand side of~\eqref{e:residual-diffusivity} is strictly positive if there exists some $i, j \in \set{1, \dots, M}$ such that $\sigma_0(i) \neq \sigma_0(j)$.  This is equivalent to saying that the image of $Q_0$ under $\varphi$ does not coincide with $Q_k$ for some $k \in \Z^d$. 
\end{remark}

\begin{figure}[hbt]
  \includegraphics[width=.4\linewidth]{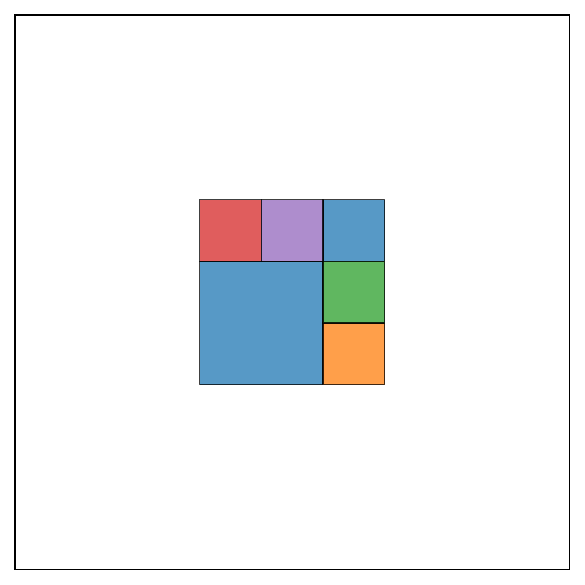}
  \qquad
  \includegraphics[width=.4\linewidth]{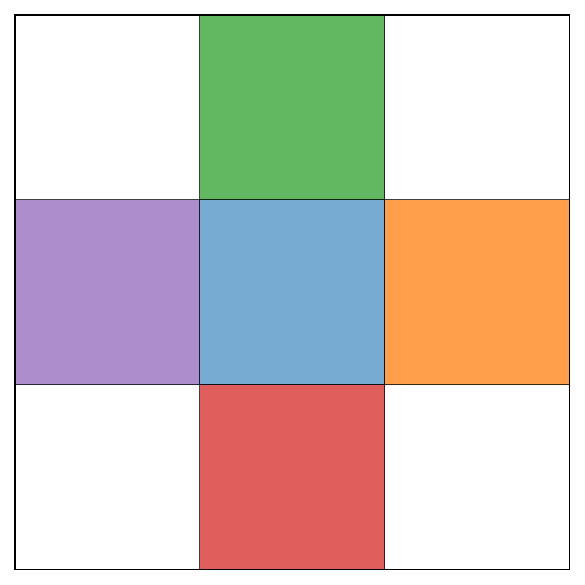}
  \caption{A visual example of~$\varphi$ in two dimensions.
    Each square region in the left figure is expanded to the corresponding unit cube in the right figure.
  }
  \label{f:phi}
\end{figure}

\section{Proof of the upper bound (Proposition \ref{p:var-upper-intro})}\label{s:variance-upper}

The upper bound~\eqref{e:residual-diffusivity-upper} follows from a more general fact about Markov processes whose transition density is invariant under~$\Z^d$ shifts.
Since the proof is short and elementary we present it first.

Let $Y_n$ be a Markov process on $\R^d$ and~$p^Y_n$ denote its~$n$-step transition density.
For brevity, when~$n = 1$ we will write~$p^Y$ for~$p^Y_1$.
We will assume~$p^Y$ is \emph{invariant under~$\Z^d$-shifts}.
That is, for every $k \in \Z^d$, $y, z \in \R^d$ we assume
\begin{equation}\label{e:shift-invariant}
  p^Y(y + k, z + k) = p^Y(y, z)\,.
\end{equation}

Let $\tilde Y_n$, defined by
\begin{equation}\label{e:tildeY}
  \tilde Y_n \defeq Y_n \pmod{\Z^d}
  \,,
\end{equation}
denote the projection of the process~$Y$ onto the torus~$\T^d$.
The $\Z^d$-shift invariance~\eqref{e:shift-invariant} implies that the process~$\tilde Y$ is also a Markov process on~$\T^d$ with transition density given by
\begin{equation}\label{e:ptildeY}
  p^{\tilde Y}(\tilde y, \tilde z)
    = \sum_{k \in \Z^d} p^{Y}(y, z + k)
\end{equation}
where~$y, z \in \R^d$ are any two elements such that
\begin{equation}
  \tilde y = y \pmod{\Z^d}
  \,,
  \qquad\text{and}\qquad
  \tilde z = z \pmod{\Z^d}
  \,.
\end{equation}
We will assume that the stationary distribution of~$\tilde Y$ is the Lebesgue measure on~$\T^d$.
By~\eqref{e:ptildeY} this is equivalent to assuming
\begin{equation}\label{e:leb-inv}\noeqref{e:leb-inv}
    \sum_{n \in \Z^d}  \int_{y \in Q_0} p^Y(y, z + n) \, dy
    = 1
    \quad
    \text{for every~$z \in Q_0$}
  \,.
\end{equation}
By~\eqref{e:shift-invariant}, is equivalent to the condition
\begin{equation}\label{e:leb-inv-prime}
  \tag{\ref{e:leb-inv}$'$}
  \int_{\R^d} p^Y(y, z) \, dy
    = 1
    \,,
    \quad
    \text{for every~$z \in \R^d$}
  \,.
\end{equation}

We will now prove Proposition~\ref{p:var-upper-intro} by proving an upper bound on the asymptotic variance of~$Y$ in terms of the \emph{mixing time} of~$\tilde Y$.
\begin{proposition}\label{p:var-gen-upper}
  Let~$T = \tmix(\tilde Y)$ be the first time for which
  \begin{equation}
    \sup_{\tilde y \in \T^d}
      \norm{p_T^{\tilde Y}(\tilde y, \cdot) - 1 }_{L^1(\T^d)} < \frac{1}{2}
    \,.
  \end{equation}
  There exists a universal constant~$C$ such that for all $v \in \R^d$
  \begin{equation}\label{e:var-gen-upper}
    \lim_{n \to \infty} \frac{ \var^x(v \cdot Y_n) }{n}
      \leq C \abs{v}^2 \tmix(\tilde Y) \sup_{y \in Q_0} \E^y \abs{Y_1 - Y_0}^2\,.
  \end{equation}
\end{proposition}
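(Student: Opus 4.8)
The plan is to sample $Y$ at multiples of the mixing time $T \defeq \tmix(\tilde Y)$, to telescope $v \cdot Y_n$ into block increments over time intervals of length $T$, and to show that these increments are only geometrically correlated; then~\eqref{e:var-gen-upper} drops out by summing a geometric series. Throughout write $\kappa \defeq \sup_{y \in Q_0} \E^y\abs{Y_1 - Y_0}^2$, which we may assume finite (otherwise there is nothing to prove), and let $\mathcal{F}_n$ be the $\sigma$-algebra generated by $Y_0,\dots,Y_n$. First I would record two elementary consequences of the hypotheses. Using the shift invariance~\eqref{e:shift-invariant} one gets $\E^y\abs{Y_1 - Y_0}^2 = \E^{y - \floor{y}}\abs{Y_1-Y_0}^2 \le \kappa$ for \emph{every} $y \in \R^d$, and hence, by Cauchy--Schwarz and conditioning one step at a time, $\E^y\abs{Y_k - Y_0}^2 \le k^2\kappa$ for every $y$ and $k \ge 0$. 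Second, since $\tilde Y$ has Lebesgue stationary measure by~\eqref{e:leb-inv-prime}, its Dobrushin contraction coefficient at time $T$ is $\tfrac12\sup_{\tilde y,\tilde y'}\norm{p_T^{\tilde Y}(\tilde y,\cdot)-p_T^{\tilde Y}(\tilde y',\cdot)}_{L^1(\T^d)} \le \sup_{\tilde z}\norm{p_T^{\tilde Y}(\tilde z,\cdot)-1}_{L^1(\T^d)} < \tfrac12$, so iterating the $L^1$-contraction of $(p_T^{\tilde Y})$ against the invariant density $1$ gives the geometric mixing bound $\sup_{\tilde z}\norm{p_{kT}^{\tilde Y}(\tilde z,\cdot)-1}_{L^1(\T^d)} \le C_0 2^{-k}$ for all $k\ge 0$, with $C_0$ universal.

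Next I would set up the block decomposition. Fix $v$ and $n$, write $n = NT + r$ with $0 \le r < T$, and set $W_m \defeq v\cdot(Y_{(m+1)T} - Y_{mT})$ for $0 \le m < N$ and $R \defeq v\cdot(Y_n - Y_{NT})$, so that $v\cdot Y_n$ equals a constant plus $\sum_{m=0}^{N-1}W_m + R$. From $\E^y\abs{Y_k-Y_0}^2 \le k^2\kappa$ we get $\var^x(R) \le \abs{v}^2 T^2\kappa$ and $\var^x(W_m) \le \E^x[W_m^2] \le \abs{v}^2 T^2\kappa$, and also $\abs{g} \le \abs{v} T\sqrt\kappa$ on $\T^d$, where $g(\tilde z) \defeq \E[\,v\cdot(Y_T - Y_0) \mid \tilde Y_0 = \tilde z\,]$ is well defined by shift invariance. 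The point is the cross terms: by the Markov property and shift invariance, for $m < l$,
\begin{equation*}
  \E[W_l \mid \mathcal{F}_{(m+1)T}] = \bigl(P^{\tilde Y}_{(l-m-1)T}\, g\bigr)\bigl(\tilde Y_{(m+1)T}\bigr),
\end{equation*}
where $P^{\tilde Y}_t$ denotes the transition operator with kernel $p^{\tilde Y}_t$; since $W_m$ is $\mathcal{F}_{(m+1)T}$-measurable this yields $\cov^x(W_m, W_l) = \cov^x\bigl(W_m,\, (P^{\tilde Y}_{(l-m-1)T}g)(\tilde Y_{(m+1)T})\bigr)$. Subtracting the constant $\bar g \defeq \int_{\T^d} g$, the mixing bound gives $\norm{P^{\tilde Y}_{(l-m-1)T}g - \bar g}_{L^\infty(\T^d)} \le C_0 2^{-(l-m-1)}\abs{v} T\sqrt\kappa$, and Cauchy--Schwarz then gives $\abs{\cov^x(W_m,W_l)} \le C_0 \abs{v}^2 T^2\kappa\, 2^{-(l-m-1)}$.

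Finally, I would assemble the pieces. Expanding $\var^x\bigl(\sum_{m}W_m\bigr) = \sum_m \var^x(W_m) + 2\sum_{m<l}\cov^x(W_m,W_l)$ and using $\sum_{0\le m<l\le N-1} 2^{-(l-m-1)} \le 2N$, together with $\var^x(v\cdot Y_n) \le 2\var^x\bigl(\sum_m W_m\bigr) + 2\var^x(R)$, produces $\var^x(v\cdot Y_n) \le C_1(N+1)\abs{v}^2 T^2\kappa$ for a universal $C_1$. Dividing by $n$ and using $(N+1)/n \to 1/T$ as $n\to\infty$ gives
\begin{equation*}
  \limsup_{n\to\infty}\frac{\var^x(v\cdot Y_n)}{n} \le C_1\abs{v}^2 T\kappa = C_1\abs{v}^2\,\tmix(\tilde Y)\sup_{y\in Q_0}\E^y\abs{Y_1-Y_0}^2,
\end{equation*}
which is~\eqref{e:var-gen-upper} (the limit itself exists by standard Markov additive functional theory, but only the $\limsup$ bound is needed here).

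There is no deep difficulty — the paper itself flags this as short and elementary — so the one thing I would be careful about is \emph{uniformity in the starting point} $x$: the identity $\cov^x(W_m,W_l)=\cov^x(W_m,(P^{\tilde Y}_{(l-m-1)T}g)(\tilde Y_{(m+1)T}))$ and the bounds $\E^y\abs{Y_1-Y_0}^2\le\kappa$, $\abs{g}\le\abs{v}T\sqrt\kappa$ must be derived from the tower property, the Markov property, and the $\Z^d$-shift invariance~\eqref{e:shift-invariant} alone, never from any assumed stationarity of $Y$ on $\R^d$ (which does not exist). Getting the geometric mixing bound with a genuinely universal constant $C_0$ out of the $L^1$ mixing-time condition is the only other step where one must be slightly attentive to constants.
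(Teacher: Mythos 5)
Your proof is correct, and it follows the same overall strategy as the paper's: telescope $v\cdot Y_n$ into increments, show the increments decorrelate because of the $L^1$ mixing of $\tilde Y$ on the torus, and sum a geometric series. The two arguments differ only in the granularity of the decomposition and in where the factor of $T$ enters. The paper works with the unit-step increments $\Delta_m = Y_{m+1}-Y_m$ and proves a covariance-decay lemma, $\abs{\cov^x(v\cdot\Delta_m, v\cdot\Delta_{m+k+1})}\le 4\abs{v}^2\sup_{\tilde y}\norm{p_k^{\tilde Y}(\tilde y,\cdot)-1}_{L^1(\T^d)}\sup_y\E^y\abs{\Delta_0}^2$; the factor $\tmix(\tilde Y)$ then comes from summing the mixing distances over all lags, $\sum_{k\ge 0}\sup_{\tilde y}\norm{p_k^{\tilde Y}(\tilde y,\cdot)-1}_{L^1(\T^d)}\le 2T$. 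You instead group the steps into blocks of length $T$, so you only need the mixing bound at multiples of $T$, but you pay for it with the cruder per-block variance bound $\var^x(W_m)\le \abs{v}^2T^2\kappa$ (via Minkowski, with $\kappa=\sup_y\E^y\abs{Y_1-Y_0}^2$), which is where your factor of $T$ comes from; with only about $n/T$ blocks the two bookkeepings land on the same $C\abs{v}^2 T\kappa$. Your mechanism for exploiting periodicity --- replacing $W_l$ by its conditional expectation $(P^{\tilde Y}_{(l-m-1)T}g)(\tilde Y_{(m+1)T})$ and subtracting the constant $\bar g$ --- is the exact analogue of the paper's observation that its auxiliary function $g$ is $\Z^d$-periodic while its $f$ has zero integral, which is what lets both arguments replace the kernel by its deviation from $1$. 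One small point in your favor: neither your argument nor the paper's actually establishes that the limit in~\eqref{e:var-gen-upper} exists (both only bound the $\limsup$), so your closing caveat is consistent with what the paper itself does.
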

\begin{remark}
  In this generality the upper bound in Proposition~\ref{p:var-gen-upper} can not be improved and it is easy to produce examples where the upper bound~\eqref{e:var-gen-upper} is attained.
  One such example can be obtained using the flow map of a \emph{shear flow}.
  Explicitly, let~$f \colon \R \to \R$ be a periodic function.
  The flow map at time $t=1$ of the shear flow with profile~$f$ directed along the~$x$ axis is given by
  \begin{equation}
    \varphi(x, y) = (x + f(y), y), \quad \quad (x,y) \in \R^2
    \,.
  \end{equation}
  If we choose~$Y = X^\epsilon$ (i.e. the system~\eqref{e:Xn} with $\varphi$ defined in this way), then~$\tmix(\tilde Y) = O(1/\epsilon^2)$, and if~$v = \e_1$, then both sides of~\eqref{e:var-gen-upper} are~$O(1/\epsilon^2)$ as~$\epsilon \to 0$.
\end{remark}

Proposition~\ref{p:var-gen-upper} immediately implies Proposition~\ref{p:var-upper-intro}, and we now present the proof.
Here, and throughout this paper, we will allow~$C$ to be a generic finite~$\epsilon$-independent constant, whose value is unimportant and may increase from line to line.
\begin{proof}[Proof of Proposition~\ref{p:var-upper-intro}]
  Let~$Y_n = X^\epsilon_n$.
  The upper bound~\eqref{e:var-gen-upper} implies that for any~$v \in \R^d$ we have
  \begin{equation}
    \lim_{n \to \infty} \frac{ \var^x( v \cdot X^\epsilon_n ) }{n}
      \leq C \abs{v}^2 \tmix(\tilde X^\epsilon) \sup_{y \in Q_0} \E \abs{\varphi(y + \epsilon \xi_1) - y}^2
      \,.
  \end{equation}
  By~\cite{IyerLuEA24} we know~$\tmix(\tilde X^\epsilon) \leq C \abs{\ln \epsilon}$, from which~\eqref{e:residual-diffusivity-upper} follows immediately.
\end{proof}

\begin{remark}
  Before proceeding further, we briefly comment on why we expect the upper bound~\eqref{e:residual-diffusivity-upper} is not optimal in our setting and misses by a~$\abs{\ln \epsilon}$ factor.
  Let~$T' = \tmix( \tilde X^\epsilon )$ be the mixing time of~$\tilde X^\epsilon$ on the torus~$\T^d$.
  By~\cite{IyerLuEA24} we recall that~$T' \leq C \abs{\ln \epsilon}$.
  Applying Proposition~\ref{p:var-gen-upper} to the Markov processes~$Y_n = X^\epsilon_{n T'}$ gives
  \begin{equation}
    \lim_{n \to \infty} \frac{ \var^x( v \cdot X^\epsilon_n ) }{n}
      = \frac{1}{T'}
	\lim_{n \to \infty} \frac{ \var^x( v \cdot Y_n ) }{n}
      \leq C \abs{v}^2 \frac{\tmix(\tilde Y)}{T'} \sup_{y \in Q_0} \E^y \abs{X^\epsilon_{T'} - y}^2
      \,.
  \end{equation}
  By definition of~$T'$, we know~$\tmix(\tilde Y) = 1$.
  If~$\varphi$ is sufficiently mixing, then successive steps of~$X^\epsilon$ should decorrelate. 
  In this case, in the absence of a drift, we expect~$\E^y \abs{X^\epsilon_{T'} - y}^2 \leq C T'$,
  from which we immediately obtain
  \begin{equation}
    \lim_{n \to \infty} \frac{ \var^x( v \cdot X^\epsilon_n ) }{n}
      \leq C \abs{v}^2\,.
  \end{equation}
  While this suggests an~$\epsilon$-independent upper bound for the asymptotic variance, it seems challenging to prove rigorously.
\end{remark}

We will now prove Proposition~\ref{p:var-gen-upper}.
The first step is to obtain a decorrelation bound on the increments of~$Y$.
\begin{lemma}\label{l:cov-decay-delta}
  Let~$\Delta_n = Y_{n+1} - Y_{n}$, where~$Y$ is the process defined in~\eqref{e:ZepDef}.  
  Then
  \begin{equation}
    |\cov^x( v \cdot \Delta_{m}, v \cdot \Delta_{m + n + 1} )|
      \leq 4 \abs{v}^2
	\sup_{\tilde y \in \T^d}
	    \norm{ p^{\tilde Y}_n( \tilde y, \cdot ) - 1 }_{L^1(\T^d)}
	\sup_{y \in \R^d} \E^y \abs{\Delta_0}^2
      \,.
  \end{equation}
\end{lemma}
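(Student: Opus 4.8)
The plan is to express the covariance of the two increments as an integral against transition densities, then exploit the $\Z^d$-shift invariance to pass to the torus, where the mixing of $\tilde Y$ furnishes the decay. First I would write
\[
  \cov^x(v\cdot\Delta_m, v\cdot\Delta_{m+n+1})
    = \E^x\bigl[(v\cdot\Delta_m)(v\cdot\Delta_{m+n+1})\bigr]
      - \E^x[v\cdot\Delta_m]\,\E^x[v\cdot\Delta_{m+n+1}].
\]
Conditioning on $Y_m$ and on $Y_{m+1}$ (the state right after the $m$-th increment), the Markov property lets me write the first term as an iterated integral: integrate $g(y) \defeq \E^y[v\cdot\Delta_0]$ against $p^Y_n(Y_{m+1}, \cdot)$, then weight by $(v\cdot\Delta_m) = v\cdot(Y_{m+1}-Y_m)$ and integrate against the law of $(Y_m, Y_{m+1})$. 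Since $g$ depends on $y$ only through $\tilde y = y \bmod \Z^d$ (this is exactly where shift-invariance \eqref{e:shift-invariant} is used — $\E^{y+k}[v\cdot\Delta_0] = \E^y[v\cdot\Delta_0]$ because the increment distribution is $\Z^d$-periodic), I can fold the $n$-step kernel down to the torus and replace $p^Y_n(Y_{m+1},\cdot)$ acting on $g$ by $\int_{\T^d} p^{\tilde Y}_n(\tilde Y_{m+1}, \tilde z)\, g(\tilde z)\, d\tilde z$.

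The key algebraic step is then to subtract off the "equilibrium" value: because Lebesgue measure is stationary for $\tilde Y$, we have $\int_{\T^d} g(\tilde z)\, d\tilde z = \E^{U}[v\cdot\Delta_0]$ with $U\sim\unif(Q_0)$, and one checks that the second (product) term in the covariance equals $\E^x[v\cdot\Delta_m]$ times this same quantity. Hence the covariance becomes
\[
  \cov^x(v\cdot\Delta_m, v\cdot\Delta_{m+n+1})
    = \E^x\!\left[(v\cdot\Delta_m)\int_{\T^d}\bigl(p^{\tilde Y}_n(\tilde Y_{m+1},\tilde z) - 1\bigr)\,g(\tilde z)\,d\tilde z\right].
\]
Now I bound the inner integral by $\|p^{\tilde Y}_n(\tilde Y_{m+1},\cdot) - 1\|_{L^1(\T^d)}\cdot\|g\|_{L^\infty}$, take the supremum over the base point to get the $\sup_{\tilde y}$ factor, and are left to control $\E^x|v\cdot\Delta_m|\cdot\|g\|_\infty$. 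Both of these are handled by Cauchy–Schwarz and Jensen: $\|g\|_\infty = \sup_y |\E^y[v\cdot\Delta_0]| \le |v|\sup_y (\E^y|\Delta_0|^2)^{1/2}$, and $\E^x|v\cdot\Delta_m| \le |v|(\E^x|\Delta_m|^2)^{1/2} = |v|(\E^x\E^{Y_m}|\Delta_0|^2)^{1/2} \le |v|\sup_y(\E^y|\Delta_0|^2)^{1/2}$. Multiplying these gives $|v|^2 \sup_y \E^y|\Delta_0|^2$; the factor $4$ absorbs the constants coming from splitting the covariance into its two pieces (or, more precisely, from the crude bound $|ab - \bar a\bar b| \le$ twice the relevant products when one pulls the $\mp 1$ through).

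The main obstacle I anticipate is purely bookkeeping rather than conceptual: one must be careful that the "$-1$" subtracted inside the $L^1$ norm is precisely accounted for by the stationarity of Lebesgue measure, i.e.\ that the product term $\E^x[v\cdot\Delta_m]\,\E^x[v\cdot\Delta_{m+n+1}]$ really does match $\E^x[(v\cdot\Delta_m)\cdot 1 \cdot \int g]$. This uses that, averaged over the law of $Y_{m+1}$ given $Y_m$ and then over $Y_m$, the torus-projection $\tilde Y_{m+1}$ has already equilibrated in the relevant sense only through the $g$-integral — but in fact no equilibration of $\tilde Y_{m+1}$ is needed here, since $\int_{\T^d} g(\tilde z)d\tilde z$ is a constant independent of $\tilde Y_{m+1}$, so the identity is exact. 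Making this chain of conditional expectations rigorous (and checking the roles of $\Delta_m$ versus $\Delta_{m+n+1}$, which are not symmetric in the conditioning) is the one place to proceed with care; everything else is Cauchy–Schwarz.
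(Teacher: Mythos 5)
Your overall strategy is the same as the paper's: condition on $Y_{m+1}$, observe that $g(y)=\E^y[v\cdot\Delta_0]$ is $\Z^d$-periodic by \eqref{e:shift-invariant}, fold the $n$-step kernel onto the torus, insert the ``$-1$'', and close with an $L^1\times L^\infty$ estimate plus Cauchy--Schwarz. However, your ``key algebraic step'' as stated is false, and the justification you give for it in the final paragraph does not address the actual difficulty. You claim $\E^x[v\cdot\Delta_{m+n+1}]=\int_{\T^d}g(\tilde z)\,d\tilde z$. In fact $\E^x[v\cdot\Delta_{m+n+1}]=\E^x[g(\tilde Y_{m+n+1})]$, and since the law of $\tilde Y_{m+n+1}$ started from the point $x$ is not uniform on $\T^d$ at any finite time, this does not equal $\int_{\T^d}g$ (the process generally has a nonconstant periodic drift $s(y)=\E^y(Y_1-y)$). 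Consequently your displayed formula for the covariance is not an identity: it is off by the term $\E^x[v\cdot\Delta_m]\,\bigl(\E^x[v\cdot\Delta_{m+n+1}]-\int_{\T^d}g\bigr)$. The observation that $\int_{\T^d}g$ is a constant independent of $\tilde Y_{m+1}$ is true but irrelevant to whether that constant equals $\E^x[v\cdot\Delta_{m+n+1}]$.

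The clean way to make the constant drop out --- and this is exactly what the paper does --- is to center the \emph{first} increment rather than the second: write $\cov^x(A,B)=\E^x\bigl[(A-\E^x A)\,\E[B\mid\mathcal F_{m+1}]\bigr]$, so that when $p^{\tilde Y}_n$ is replaced by $p^{\tilde Y}_n-1$ the constant $\int_{\T^d}g$ multiplies $\E^x[A-\E^x A]=0$ and vanishes identically. (In the paper this is the role of the signed density $f$, which is built so that $\int_{\R^d}f=0$.) Alternatively, your decomposition can be salvaged as stated by noting that the discrepancy $\E^x[v\cdot\Delta_{m+n+1}]-\int_{\T^d}g$ is itself bounded by $\sup_{\tilde y}\norm{p^{\tilde Y}_n(\tilde y,\cdot)-1}_{L^1(\T^d)}\,\norm{g}_{L^\infty}$, so the error term obeys the same bound as your main term and the prefactor $4$ still suffices. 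Either repair is one line, but as written your argument asserts a false identity and defends it with the wrong reason.
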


\begin{proof}
  Note first for any~$i,j \in \set{1, \dots, d}$,
  \begin{equation}\label{e:tmpCov1}
    \cov^x(  \Delta^i_m, \Delta^j_{m + n + 1} )
      = \int_{\R^{2d}} f(y') p^Y_n( y', z ) g(z) \, dy' \, dz\,, 
  \end{equation}
  where
  \begin{gather}
    f(y') = \int_{y \in \R^d} p^Y_m(x, y) p^Y_1(y, y') (y'_i - y_i - \E^x \Delta_m^i) \, dy
    \,,
    \\
    g(z) = \int_{z' \in \R^d} p^Y_1(z, z') (z'_j - z_j - \E^x \Delta_{m+n+1}^j) \, dz'
    \,.
  \end{gather}
  Clearly
  \begin{equation}
    \int_{\R^d} f(y') \, dy'
      = \E^x \Delta_m^i - \E^x \Delta_m^i
      = 0 
    \,.
  \end{equation}
  Moreover, for any~$k \in \Z^d$ we note
  \begin{align}
    g(z + k)
      &= \int_{z' \in \R^d} p^Y_1(z + k, z') (z'_j - z_j - k_j - \E^x \Delta_{m+n+1}^j) \, dz'
    \\
      &= \int_{z' \in \R^d} p^Y_1(z, z'-k) ( (z'_j - k_j) - z_j - \E^x \Delta_{m+n+1}^j) \, dz'
      = g(z)\,,
  \end{align}
  and so~$g$ is~$\Z^d$ periodic.
  Thus using the identity~\eqref{e:ptildeY} we see
  \begin{align}
    \int_{\R^{2d}} f(y') p^Y_n( y', z ) g(z) \, dy' \, dz
      &= \sum_{k \in \Z^d} \int_{\R^{d} \times Q_0}
	f(y') p^{Y}_n( y', z + k) g( z) \, dy' \, d z
    \\
      &= \int_{\R^{d} \times \T^d} f(y') p^{\tilde Y}_n( \tilde y', \tilde z ) g( \tilde z) \, dy' \, d\tilde z
    \\
      &= \int_{\R^{d} \times \T^d} f(y') (p^{\tilde Y}_n( \tilde y', \tilde z ) - 1) g( \tilde z) \, dy' \, d\tilde z
    \\
      \label{e:tmpCov2}
      &\leq
	\norm{f}_{L^1(\R^d)}
	\sup_{\tilde y' \in \T^d} \norm{p^{\tilde Y}_n( \tilde y',  \cdot) - 1}_{L^1(\T^d)}
	\sup_{\tilde z \in \T^d} \abs{g( \tilde z)}
	\,.
  \end{align}
Here we used the convention that~$\tilde y = y \pmod{\Z^d}$ denotes the projection of~$y \in \R^d$ to the point~$\tilde y \in \T^d$.

Observe that
\[
\norm{f}_{L^1(\R^d)} \leq  \E^{x}\left[ \left| \Delta_m^i  - \E^{x}[\Delta_m^i] \right| \right]
\]
and
\[
\sup_{ z \in \T^d} \abs{g( z)} \leq \sup_{z \in \T^d} \E^{z}\left[ \left| \Delta_0^j  - \E^{x}[\Delta_{m+n+1}^j] \right| \right]
\]
For any integer $k \geq 1$, the Markov property implies
  \begin{equation}
    \E^{x} \abs{\Delta_k^i}
      = \int_{\R^d} p^Y_k(x, y) \E^y \abs{Y_1 - y} \, dy
      \leq \sup_{y \in \R^d} \E^y \abs{\Delta^i_0}
    \,.
  \end{equation}
  Combining this with~\eqref{e:tmpCov1} and~\eqref{e:tmpCov2} gives
  \begin{equation}
    \abs{\cov^x(  \Delta^i_m, \Delta^j_{m + n + 1} )}
      \leq
	4 \sup_{y \in \R^d} \E^y \abs{\Delta_0^i}
	  \sup_{y \in \R^d} \E^y \abs{\Delta_0^j}
	\sup_{\tilde y' \in \T^d} \norm{p^{\tilde Y}_n( \tilde y', \tilde z ) - 1}_{L^1(\T^d)}
  \end{equation}
  The lemma now follows from the Cauchy--Schwarz inequality.
\end{proof}

Proposition~\ref{p:var-gen-upper} follows quickly from Lemma~\ref{l:cov-decay-delta}.
\begin{proof}[Proof of Proposition~\ref{p:var-gen-upper}]
  Let~$\Delta_k = Y^\epsilon_{k+1} - Y^\epsilon_{k}$, and note
  \begin{align}
    \var^x( v \cdot Y_{N} )
      &= \var^x\paren[\Big]{
	  \sum_{n=0}^{N-1} v \cdot \Delta_n
	}
    \\
      &= 
	\sum_{n=0}^{N-1} \var^x(v \cdot \Delta_n)
	+ 2 \sum_{n = 0}^{N-1}
	  \sum_{k=0}^{N -n - 1}
	    \cov^x(
	      v\cdot \Delta_{n},
	      v\cdot \Delta_{n+k+1}
	    )
    \\
    \label{e:varYtmp1}
      &\leq
	N \abs{v}^2 \sup_{y \in \R^d} \E^y \abs{\Delta_0}^2
	\paren[\Big]{
	  1
	  + C \sum_{k=0}^\infty
	    \sup_{\tilde y \in \T^d} \norm{p_k( \tilde y, \cdot ) - 1 }_{L^1(\T^d)}
	}
    \,.
  \end{align}
  Since~$T = \tmix(\tilde Y)$, for every~$\tilde y \in \T^d$, $n \in \N$ and~$j \in \set{0, \dots, T-1}$, we have
  \begin{equation}\label{e:pnYtildeGeom}
    \norm{p_{nT + j}^{\tilde Y}(\tilde y, \cdot) - 1}_{L^1(\T^d)}
      \leq \frac{1}{2^n}
      \,.
  \end{equation}
  Thus the series on the right hand side of~\eqref{e:varYtmp1} is bounded by~$2T$.
  This implies
  \begin{equation}
    \var^x( v \cdot Y_{N} )
      \leq C N \abs{v}^2 \tmix(Y) \sup_{y \in \R^d} \E^y \abs{\Delta_0}^2
      \,,
  \end{equation}
  which immediately yields~\eqref{e:var-gen-upper}.
\end{proof}
\section{Proof of the lower bound (Theorem~\ref{t:residual-diffusivity})}\label{s:lower}

The proof of Theorem~\ref{t:residual-diffusivity} can be broken down into two steps.
We first prove a general result obtaining a lower bound for the effective diffusivity, provided there is an increasing sequence of stopping times at which the transition density of the stopped process satisfies a Doeblin minorization condition.
Building on our analysis in \cite{IyerLuEA24}, we will then show that the expanding Bernoulli maps we consider satisfy this condition.
Numerical simulations indicate that other systems may also satisfy this Doeblin condition; however, we are not presently able to prove this rigorously.

\subsection{A general lower bound on the asymptotic variance}\label{s:var-lower-general}

Our aim in this section is to consider general Markov processes~$Y$ whose transition density is invariant under~$\Z^d$ shifts as we did in Section~\ref{s:variance-upper}.
The main result in this section obtains a lower bound on the asymptotic variance of~$Y$, provided the transition density is minorized by a distribution that is constant on each of the unit cubes $\set{Q_n}$.
To state this precisely let $w \colon  \R^d \times \Z^d \to [0, 1]$ be a
function such that for every $y \in \R^d$, $w(y, \cdot)$ is a probability distribution on~$\Z^d$.
Moreover, suppose~$w$ is invariant under $\Z^d$-shifts in the sense that for every $y \in \R^d$, $m,n \in \Z^d$ we have
\begin{gather}
  \label{e:w-periodic}
  w(y + n, m + n ) = w(y, m )
  \quad\text{and}\quad
  \sum_{k \in \Z^d} w(y, k) = 1
  \,.
\end{gather}

For every~$y \in \R^d$,  let $\mathcal D_{w(y)}$ be the covariance matrix of the distribution~$w(y, \cdot)$.
That is, let~$\mathcal D_{w(y)}$ be the positive semi-definite matrix whose~$i, j$-th entry is given by
\begin{equation}\label{e:deff-w}
  (\mathcal D_{w(y)})_{i,j} \defeq \cov( \e_i \cdot K, \e_j \cdot K )\,,
\end{equation}
where $K$ is a $\Z^d$ valued random variable for which $\P(K = k ) = w(y, k)$, and $\e_i \in \R^d$ is the $i^\text{th}$ elementary basis vector.
Also define the average covariance matrix~$\bar{\mathcal D}_w$ by
\begin{equation}
  \bar{\mathcal D}_w \defeq \int_{Q_0} \mathcal D_{w(y)} \, dy
  \,.
\end{equation}
We will now show that if the transition density of~$Y$ is minorized by~$w$, then so is the effective diffusivity.

\begin{proposition}\label{p:KV-variance}
  Let $Y$ be a Markov process with finite variance whose transition density~$p^Y$ satisfies~\eqref{e:shift-invariant} and~\eqref{e:leb-inv}.
  If there exists~$\beta > 0$ such that
  \begin{equation}\label{e:doeblin}
    p^Y(y, z) \geq \beta w(y, \floor{z})
    \qquad\text{for every } y,z \in \R^d
    \,,
  \end{equation}
  then
  \begin{equation}
    \lim_{n \to \infty} \frac{ \var(v \cdot Y_{n} ) }{n}
      \geq \beta v \cdot \bar{\mathcal D}_w v
      \,.
  \end{equation}
  Here, we recall $\floor{z}$ is the unique element in~$\Z^d$ such that $z \in \floor{z} + Q_0$.
\end{proposition}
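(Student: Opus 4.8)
The plan is to use the Kipnis--Varadhan martingale approach, which expresses the asymptotic variance of an additive functional of a reversible (or, more generally, a nice) Markov chain in terms of a variational formula, and then to show that the Doeblin minorization~\eqref{e:doeblin} forces this variational quantity to dominate $\beta\, v \cdot \bar{\mathcal D}_w v$. First I would pass to the projected chain $\tilde Y$ on $\T^d$, which by~\eqref{e:shift-invariant}--\eqref{e:leb-inv} is a Markov chain with the Lebesgue measure as stationary distribution and with transition density $p^{\tilde Y}$ given by~\eqref{e:ptildeY}. The displacement increment $\Delta_n = Y_{n+1} - Y_n$ can be written as $\Delta_n = \psi(\tilde Y_n, \tilde Y_{n+1})$ for a suitable $\Z^d$-valued lift of the one-step displacement (using that $x\mapsto\varphi(x)-x$, and hence the noise-perturbed displacement, is $\Z^d$-periodic), so that $v\cdot Y_N - v\cdot Y_0 = \sum_{n=0}^{N-1} v\cdot\psi(\tilde Y_n,\tilde Y_{n+1})$ is an additive functional of the torus chain with a mean-zero (after recentering) increment functional $F(\tilde y) = \E[v\cdot\psi(\tilde y, \tilde Y_1)\mid \tilde Y_0 = \tilde y]$ plus a martingale-difference part. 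The Kipnis--Varadhan theorem (see \cite{IyerLuEA24} for the relevant version, or the classical reference) then gives
\begin{equation}\label{e:KV-var-formula}
  \lim_{n\to\infty}\frac{\var(v\cdot Y_n)}{n}
  = \lim_{n\to\infty}\frac1n \E\Bigl|\sum_{k=0}^{n-1} G(\tilde Y_k,\tilde Y_{k+1})\Bigr|^2
\end{equation}
for the appropriate centered increment $G$, and more importantly a lower bound: the left-hand side is at least $\E_{\mathrm{Leb}}\bigl[\var(v\cdot\Delta_0 \mid \tilde Y_0)\bigr]$ minus corrections, i.e.\ the asymptotic variance is bounded below by the expected conditional variance of a single increment. (This is because decorrelation cross terms can only be handled via the full KV machinery, but the diagonal term alone gives a valid lower bound only after one accounts for the martingale decomposition correctly — the clean statement is that $\lim \var/n \geq \|F + (\text{corrector})\|^2 \geq$ projection onto martingale part.)

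The second and more substantive step is to extract the factor $\beta$. The minorization~\eqref{e:doeblin} lets me write the one-step kernel as a mixture: conditionally on $Y_n = y$, with probability $\beta$ the next state $Y_{n+1}$ has $\floor{Y_{n+1}}$ distributed as $w(y,\cdot)$ (times the appropriate conditional density on each cube), and with probability $1-\beta$ it is drawn from the residual kernel $(p^Y - \beta w)/(1-\beta)$. Thus I introduce an auxiliary coin flip $B_n\in\{0,1\}$ with $\P(B_n=1)=\beta$: on $\{B_n=1\}$ the integer part $\floor{\Delta_n}$ (equivalently $\floor{Y_{n+1}} - \floor{Y_n}$, since $\floor{Y_n}$ is measurable) is distributed as a fresh sample from $w(\tilde Y_n,\cdot)$ shifted appropriately, independent of the past given $\tilde Y_n$. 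The key computation is then a conditional-variance (law of total variance) bound: conditioning on $\tilde Y_n$ and on everything \emph{except} $B_n$ and the choice of the integer jump, the conditional variance of $v\cdot\Delta_n$ is at least $\beta(1-\beta)$ times $\cdots$ — no, more carefully, one writes $\var(v\cdot\Delta_n\mid\tilde Y_n) \geq \E[\var(v\cdot\Delta_n\mid\tilde Y_n, B_n)] $, and on $\{B_n=1\}$ this conditional variance is exactly $v\cdot\mathcal D_{w(\tilde Y_n)}v$ up to the bounded within-cube fluctuation which I can discard (it only helps). Integrating over the stationary (Lebesgue) law of $\tilde Y_n$ on $Q_0$ produces $\beta\, v\cdot\bar{\mathcal D}_w v$. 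The remaining point is to argue that these single-step conditional-variance contributions genuinely add up linearly in $n$ in the limit rather than cancelling: this is where one needs that the "fresh" integer jumps on the coin-successes at different times are \emph{conditionally independent given the torus chain}, so their contributions to $\var(v\cdot Y_N)$ are a sum of non-negative terms (the cross-covariances between two distinct fresh jumps vanish after conditioning), yielding $\var(v\cdot Y_N) \geq \sum_{n=0}^{N-1}\E[\beta\, v\cdot \mathcal D_{w(\tilde Y_n)}v] + (\text{cross terms that I must show are controllable})$.

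I expect the main obstacle to be precisely the handling of the cross terms / the martingale decomposition: naively the increments $v\cdot\Delta_n$ are correlated across time through the torus chain, and a crude bound on $\var(\sum\Delta_n)$ could in principle be \emph{smaller} than the sum of the variances if the correlations are negative, so one cannot simply drop cross terms. The correct resolution is the Kipnis--Varadhan identity~\eqref{e:KV-var-formula}: decompose $v\cdot(Y_N - Y_0)$ as a martingale $M_N$ plus a telescoping coboundary $g(\tilde Y_0) - g(\tilde Y_N)$ where $g$ solves (a regularized version of) the Poisson equation for the torus chain; the coboundary is bounded in $L^2$ and contributes $0$ to the limit, so $\lim\var/n = \lim\E M_N^2/N = \E_{\mathrm{Leb}}[\var(\text{martingale increment}\mid\tilde Y_0)]$, and the martingale increment is $v\cdot\Delta_0 + g(\tilde Y_1) - \E[g(\tilde Y_1)\mid\tilde Y_0]$. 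Now the conditional-variance argument of the previous paragraph applies to \emph{this} quantity, and the crucial observation is that adding the $g$-term, which depends on $\tilde Y_1$ only through its torus coordinate, does not affect the variance contribution coming from the \emph{integer} part of the jump on the coin-successes (the fresh integer jump is independent of $\tilde Y_1$ given $\tilde Y_0$ and $B_n$), so the law of total variance still produces the clean lower bound $\beta\, v\cdot\bar{\mathcal D}_w v$. Verifying that the Poisson equation can be solved well enough (spectral gap of $\tilde Y$, which one has from the upper-bound section's mixing estimates, or from \cite{IyerLuEA24}) and that all the conditioning steps are rigorous with the generic — not necessarily reversible — kernel $p^Y$ is the technical heart of the argument.
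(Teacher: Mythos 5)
Your strategy is essentially the paper's: build a bounded $\Z^d$-periodic corrector $\chi$ solving the Poisson equation $\mathcal L\chi = \bar s - s$, decompose $v\cdot Y_n$ as a martingale plus the coboundary $\chi(Y_0)-\chi(Y_n)$, identify $\lim_n \var(v\cdot Y_n)/n$ with the stationary average $\int_{Q_0}V_v(y)\,dy$ of the conditional variance of the corrected one-step increment, and then use the minorization to bound $V_v$ from below. Two places where your sketch needs tightening. First, you should not import the spectral gap from the mixing estimates for $X^\epsilon$: the proposition concerns a general $Y$, and the needed ergodicity is already a consequence of~\eqref{e:doeblin}, since together with $\sum_k w(y,k)=1$ it projects to $p^{\tilde Y}(\tilde y,\tilde z)\geq \beta$ on the torus, a uniform Doeblin condition giving $\tmix(\tilde Y)\leq 1/\beta$ and geometric convergence of the Neumann series defining $\chi$. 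Relatedly, $\Delta_n$ is \emph{not} a function of $(\tilde Y_n,\tilde Y_{n+1})$ (the integer jump is extra randomness), so one runs the martingale decomposition on the full chain $Y$ with a periodic corrector rather than writing the displacement as an additive functional of the torus chain; this does not affect your endgame. Second, your coin-flip/law-of-total-variance extraction of $\beta$ does work, but the step where you discard the $g(\tilde Y_1)$ contribution relies on the integer and fractional parts of $Y_{n+1}$ being independent on the coin-success event, which holds precisely because the minorizing density $w(y,\floor{z})$ is constant on each cube --- this deserves an explicit sentence. The paper sidesteps the issue by conditioning on the fractional part directly: the minorization gives
\begin{equation*}
  V_v(y)\;\geq\;\beta\int_{z\in Q_0}\sum_{k\in\Z^d} w(y,k)\,\bigl(v\cdot(k-b(y,z))\bigr)^2\,dz
  \quad\text{with $b(y,z)$ independent of $k$,}
\end{equation*}
and then uses $\inf_b\sum_k w(y,k)(v\cdot(k-b))^2 = v\cdot\mathcal D_{w(y)}v$, the variational characterization of variance, before integrating over $y\in Q_0$. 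Either route closes the argument; the paper's is slightly more economical because it never needs the mixture representation of the kernel.
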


In general, we expect to apply Proposition~\ref{p:KV-variance} as follows.
Let~$\tilde X^\epsilon = X^\epsilon \pmod{\Z^d}$ be the projection of~$X^\epsilon$ to the torus.
If~$T_\epsilon = \tmix^\infty(X^\epsilon)$ denotes the uniform mixing time of~$\tilde X^\epsilon$, then one might expect that the Markov process~$Y_n = X^\epsilon_{n T_\epsilon}$ satisfies the minorization condition~\eqref{e:doeblin}.
Applying Proposition~\ref{p:KV-variance} would now get a lower bound on the effective diffusivity of~$X^\epsilon$ and show
\begin{equation}\label{e:varXn-ep-heuristic}
  \lim_{n \to \infty} \frac{\var(v \cdot X^\epsilon_n)}{n}
    \geq \frac{c}{T_\epsilon} v \cdot \bar{\mathcal D}_{w^\epsilon} v
    \,,
\end{equation}
where
\begin{equation}
  w^\epsilon(m, n) = \P( X^\epsilon_{T_\epsilon} \in Q_n \given X^\epsilon_0 \sim \unif(Q_m) )
  \,.
\end{equation}
One would now need to to estimate the right hand side of~\eqref{e:varXn-ep-heuristic} and show that it does not vanish as~$\epsilon \to 0$.

Both the minorization condition~\eqref{e:p2lower-yhat} for~$Y_n = X^\epsilon_{n T_\epsilon}$, and an~$\epsilon$-independent lower bound for~\eqref{e:varXn-ep-heuristic} are not easy to prove in general.
We will prove Theorem~\ref{t:residual-diffusivity} by proving a version of the above when~$T_\epsilon$ is a time that depends on the starting point.
We will then require a slightly stronger version of Proposition~\ref{p:KV-variance}, which we now state.

\begin{corollary}\label{c:KV-stopped}
  Let~$Y$ be as in Proposition~\ref{p:KV-variance}, and assume the same minorization condition~\eqref{e:doeblin}.
  If $\gamma > 0$ is a constant and~$\tau_n$ is an sequence of bounded stopping times such that $\tau_n \geq \gamma n$ almost surely for all $n \in \N$, then
  then
  \begin{equation}\label{e:KV-stopped}
    \liminf_{n \to \infty} \frac{ \var(v \cdot Y_{\tau_n} ) }{n}
      \geq \beta \gamma v \cdot \bar{\mathcal D}_w v
      \,.
  \end{equation}
\end{corollary}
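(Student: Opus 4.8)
The plan is to run the Kipnis--Varadhan martingale that underlies the proof of Proposition~\ref{p:KV-variance} and exploit the fact that its quadratic variation is nondecreasing in time: stopping at $\tau_n\ge\gamma n$ rather than at the deterministic time $\lceil\gamma n\rceil$ can only increase it. First I would record what that proof provides. By~\eqref{e:doeblin} and~\eqref{e:ptildeY} the torus chain $\tilde Y$ satisfies the Doeblin bound $p^{\tilde Y}(\tilde y,\tilde z)\ge\beta$, so it has a uniform spectral gap and there is a bounded corrector $g\colon\T^d\to\R$ for which, with $\bar v\defeq\int_{Q_0}\bigl(\E^y[v\cdot Y_1]-v\cdot y\bigr)\,dy$ the (constant) mean displacement, the process
\[
  M_n \defeq v\cdot Y_n - n\bar v + g(\tilde Y_n)
\]
is an $\mathcal F_n$-martingale, $\mathcal F_n\defeq\sigma(Y_0,\dots,Y_n)$. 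Set $N_n\defeq M_n-M_0$, so that $N_0=0$ and the predictable bracket $\langle N\rangle_n\defeq\sum_{k=0}^{n-1}\E[(N_{k+1}-N_k)^2\mid\mathcal F_k]$ equals $\sum_{k=0}^{n-1}q(\tilde Y_k)$, where $q(\tilde y)\defeq\var^y\bigl(v\cdot Y_1+g(\tilde Y_1)\bigr)$ depends on $y$ only through $\tilde y$ (by~\eqref{e:shift-invariant}). The key point, which is exactly the computation in the proof of Proposition~\ref{p:KV-variance}, is the pointwise bound $q(\tilde y)\ge\beta\,\phi(\tilde y)$, where $\phi(\tilde y)\defeq v\cdot\mathcal D_{w(y)}v$: decomposing $p^Y(y,\cdot)=\beta\,w(y,\floor{\cdot})+(1-\beta)\,r(y,\cdot)$, on the Doeblin event $Y_1$ is obtained by sampling a cube from $w(y,\cdot)$ and then landing uniformly inside it, independently, so the conditional variance of $v\cdot Y_1+g(\tilde Y_1)$ is at least $v\cdot\mathcal D_{w(y)}v$, and the law of total variance finishes the bound.

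Granting this, the heart of the proof is short. Since $\tau_n$ is a bounded (hence integrable) stopping time, optional stopping applied to $M_n$ and to the martingale $M_n^2-\langle N\rangle_n$ gives $\E[N_{\tau_n}]=0$ and $\E[N_{\tau_n}^2]=\E[\langle N\rangle_{\tau_n}]$. Because $\langle N\rangle_n$ is nondecreasing in $n$ and $\tau_n\ge\gamma n$ forces $\tau_n\ge\lceil\gamma n\rceil$ almost surely (stopping times being integer-valued), we obtain
\[
  \E[N_{\tau_n}^2]\ \ge\ \E\bigl[\langle N\rangle_{\lceil\gamma n\rceil}\bigr]\ =\ \sum_{k=0}^{\lceil\gamma n\rceil-1}\E[q(\tilde Y_k)]\ \ge\ \beta\sum_{k=0}^{\lceil\gamma n\rceil-1}\E[\phi(\tilde Y_k)].
\]
The uniform spectral gap of $\tilde Y$ gives $\frac1m\sum_{k=0}^{m-1}\E[\phi(\tilde Y_k)]\to\int_{\T^d}\phi=v\cdot\bar{\mathcal D}_w v$ as $m\to\infty$ (an exact identity when $Y_0$ is stationary), so $\liminf_{n\to\infty}\E[N_{\tau_n}^2]/n\ \ge\ \beta\gamma\,v\cdot\bar{\mathcal D}_w v$.

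It remains to transfer this bound from $N_{\tau_n}$ to $v\cdot Y_{\tau_n}$. From the decomposition, $v\cdot Y_{\tau_n}=N_{\tau_n}+R_{\tau_n}$ with $R_k=v\cdot Y_0+g(\tilde Y_0)-g(\tilde Y_k)+k\bar v$, so the triangle inequality in $L^2$ gives $\sqrt{\var(v\cdot Y_{\tau_n})}\ge\sqrt{\var(N_{\tau_n})}-\sqrt{\var(R_{\tau_n})}$, where $\var(N_{\tau_n})=\E[N_{\tau_n}^2]$. Since $g$ is bounded, $\var(R_{\tau_n})\le C\bigl(1+\bar v^2\,\var(\tau_n)\bigr)$; for the process on which the corollary is applied in Section~\ref{s:lower} the mean displacement vanishes, $\bar v=0$, so $\var(R_{\tau_n})\le C$ uniformly in $n$, and squaring, dividing by $n$, and combining with the previous display (with a routine subsequence argument to absorb the $o(\sqrt n)$ term into the $\liminf$) yields~\eqref{e:KV-stopped}.

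I expect the main obstacle to be precisely this last transfer once the stopped process carries a nonzero ballistic drift $\bar v$. If $\var(\tau_n)$ grows linearly in $n$ — the generic size for a stopping time assembled from weakly dependent bounded increments — then the cross term between $\bar v\tau_n$ and the martingale part $N_{\tau_n}$ is again of order $n$, i.e.\ of the same order as the main term, and can in principle degrade the constant. This is why the corollary is run on a recentred, drift-free version of the process; a direct treatment of the general case would require showing that an atypically large $\var(\tau_n)$ is automatically compensated by a correspondingly larger $\E[\langle N\rangle_{\tau_n}]$, through a Wald-type identity for the stopped predictable bracket.
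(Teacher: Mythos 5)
Your proposal is correct and follows essentially the same route as the paper: the paper's proof likewise applies optional stopping to the Kipnis--Varadhan martingale $M_n=\zeta(Y_n)-n\bar s$ from~\eqref{e:Mn}, uses $V_v\ge 0$ together with $\tau_n\ge\gamma n$ to lower-bound the stopped bracket by the bracket at the deterministic time $\lceil\gamma n\rceil$, and then reuses the ergodic theorem and the pointwise bound $V_v(y)\ge\beta\, v\cdot\mathcal D_{w(y)}v$ from the proof of Proposition~\ref{p:KV-variance}. The one issue you raise that the paper's three-line proof silently elides --- the interaction of the drift term $\tau_n\,(v\cdot\bar s)$ with the randomness of $\tau_n$ when transferring from the stopped martingale back to $\var(v\cdot Y_{\tau_n})$ --- is a genuine subtlety, but be aware that your proposed resolution ($\bar s=0$ for the process to which the corollary is applied) is not actually available: the defragmented process $Z^\epsilon$ generically has nonzero mean displacement per step, so in the general setting one really does need either $\var(\tau_n)=o(n)$ or a Wald-type compensation argument of the kind you sketch at the end.
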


We prove Proposition~\ref{p:KV-variance} and Corollary~\ref{c:KV-stopped} in Section~\ref{s:KV}, below.

\subsection{Minorizing by the transition density.}

Our aim in this section is to show that the process~$X^\epsilon$ defined in~\eqref{e:Xn} can be time-changed to satisfy the assumptions	of Corollary~\ref{c:KV-stopped}, 
with a constant $\beta$ that is independent of $\epsilon$. 
While this minorization condition is intuitive and pictorially evident, proving it rigorously is somewhat technical and takes up the bulk of this paper.
\smallskip

\begin{figure}[hbt]
  \includegraphics[width=.3\linewidth]{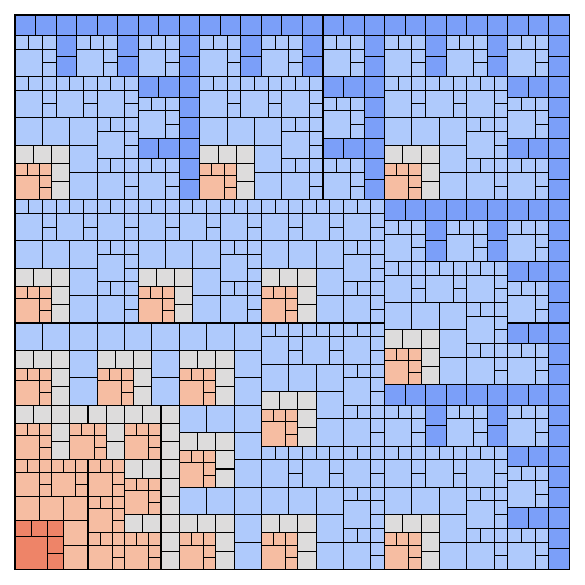}
  \quad
  \includegraphics[width=.3\linewidth]{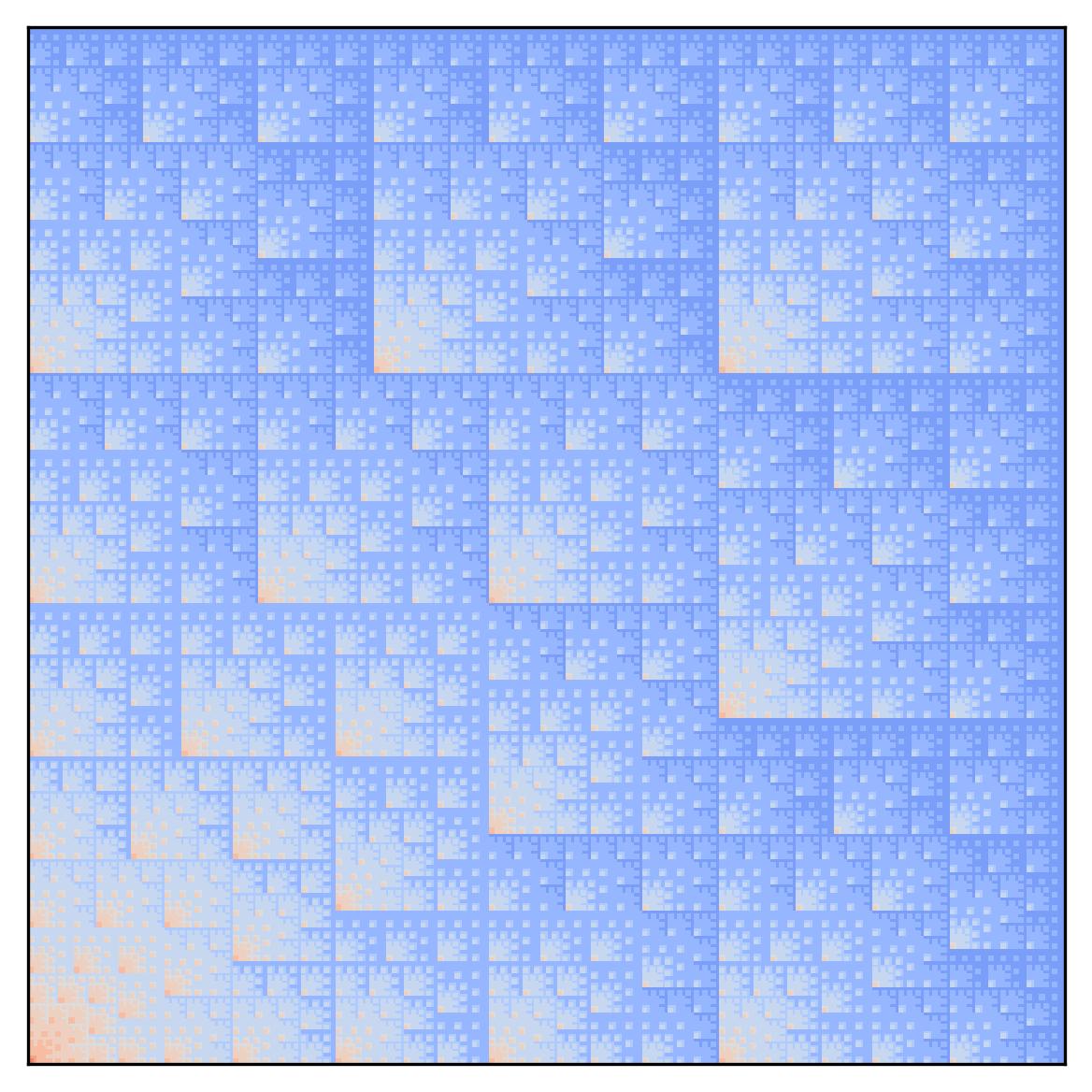}
  \quad
  \includegraphics[width=.3\linewidth]{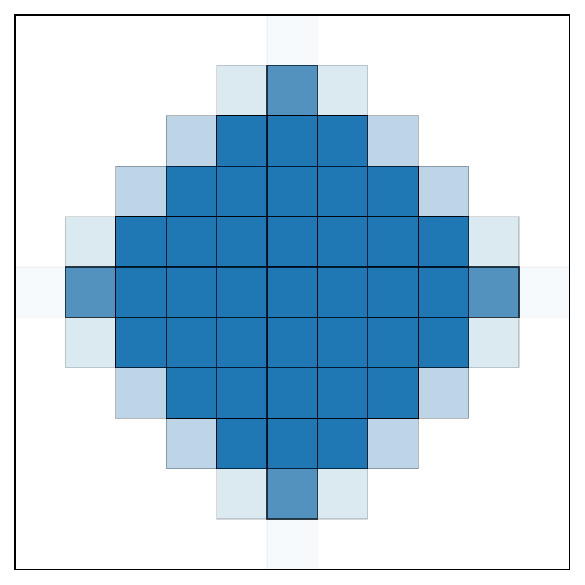}
  \caption{%
    A plot of $\theta^\epsilon$ for two values of~$\epsilon$ (left, center), and the distribution of $Y^\epsilon_1$ given $Y^\epsilon_0 \sim \unif(Q_0)$ (right).
  }
  \label{f:Nepsilon}
\end{figure}

Since~$\varphi$ is constructed from an expanding Bernoulli map, every application of~$\varphi$ induces an expansion.
Because the random perturbations in \eqref{e:Xn} have length scale~$\epsilon$, an important time scale in the dynamics is the time required for sets at the $O(\epsilon)$ scale to be expanded to a set at the $O(1)$ scale.
The expansion time,  however, is not constant as different points expand at different rates.
We will divide~$\R^d$ into cubes of side length~$O(\epsilon)$  (see Figure~\ref{f:Nepsilon}, left) and consider the number of iterations of~$\varphi$  required to expand each of these cubes to the unit cube.
Explicitly, define the (deterministic, $\Z^d$-periodic) function~$\theta^\epsilon \colon \R^d \to \Z^d$ by
\begin{equation}\label{e:thetaDef}
\theta^\epsilon(x) =
    \min\set[\Big]{ m \in \N \st
    \prod_{k = 1}^{m} \abs{\det D\varphi( X^0_k(x))}
      \geq \frac{1}{\epsilon^{d}}
    }
    \,.
\end{equation}

If we start~\eqref{e:Xn} with a delta distribution, the noise will minorize it on cubes of side length~$O(\epsilon)$, and after~$\theta^\epsilon$ steps it is reasonable to expect that the distribution can be minorized on unit cubes.
Unfortunately, the noise also ``leaks mass'' through the boundary making it hard to obtain minorization estimates with a \emph{uniform} lower bound on cubes. We will, instead, minorize the density by a bump distribution supported on unit cubes.
Explicitly, we will show
\begin{equation}\label{e:bump-lower}
  p^{X^\epsilon}_{1 + \theta^\epsilon(x)} (x, y) \geq 
	\chi
	\bm F_*(y)
	\one_{Q(X^0_{1+\theta^\epsilon}(x))}(y)
  \,,
\end{equation}
where~$\chi > 0$ is a constant that is independent of~$\epsilon, x, y$ and~$\bm F_*$ is a specific (explicit) periodic function defined in~\eqref{e:F-star-def} below.
We clarify that~$X^0_{\theta^\epsilon}(x)$ above denotes $X^0_{\theta^\epsilon(x)}(x)$, the deterministic dynamical system~$X^0$ run for time~$\theta^\epsilon(x)$.

Now, when we apply~$\varphi$ to the distribution~$\one_{Q_k} \bm F_*$, it may fragment over many cubes and we are not guaranteed a uniform minorization condition in the form of~\eqref{e:doeblin}.
We thus introduce an auxiliary Markov process~$Z$ to de-fragment the density by solving~\eqref{e:Xn} for~$2 + \theta^\epsilon$ steps, and subtracting a~$\Z^d$ valued ``defragmenting'' shift.

To make this precise, choose an i.i.d.~$\Z^d$ valued sequence of random variables~$I_1, I_2, \dots$ such that
\begin{equation}\label{e:I1-def}
  \P( I_n = \sigma_0(i) ) = \abs{E_i}
  \quad\text{for all } i \in \set{1, \dots, M}
  \,,
  n \in \N
  \,,
\end{equation}
where we recall~$\sigma_0$ is defined in Assumption~\ref{a:phi}.
Next, define an increasing sequence of stopping times~$N^\epsilon_k$ along which the cumulative expansion factor is roughly constant.
Explicitly, set
\begin{equation}
  N_1^\epsilon = 2 + \theta^\epsilon(X_0^\epsilon)
  \,,
\end{equation}
and inductively define
\begin{equation}\label{e:NepDef}
  N_{k+1}^\epsilon
    = N_k^\epsilon + 2 + \theta^\epsilon(X_{N_k^\epsilon}^\epsilon)
    \,.
\end{equation}
Finally define our auxiliary process~$Z^\epsilon$ inductively starting with~$Z^\epsilon_0 = X^\epsilon_0$.
Then, given~$Z^\epsilon_n$, set
\begin{equation}
  \hat Z^\epsilon_{n, N^\epsilon_n} = Z^\epsilon_n
  \,,
  \qquad
  \hat Z^\epsilon_{n, k+1} = \varphi(\hat Z^\epsilon_{n, k}) + \epsilon \xi_k
  \quad\text{for } k > N^\epsilon_n
  \,,
\end{equation}
and define
\begin{equation}\label{e:ZepDef}
  Z^\epsilon_{n+1}
    = \hat Z^\epsilon_{n, N^\epsilon_{n+1}} - I_{n+1}
    \,.
\end{equation}
Since~$\varphi$ has a periodic displacement it is easy to see that
\begin{equation}\label{e:ZMinusX}
  Z^\epsilon_n - X^\epsilon_{N^\epsilon_n} = \sum_{k=1}^n I_k \in \Z^d\,.
\end{equation}
Since~$\theta^\epsilon$ is~$\Z^d$ periodic, equation~\eqref{e:NepDef} implies
\begin{equation}
  N^\epsilon_{k+1} = N^\epsilon_k + 2 + \theta^\epsilon( Z^\epsilon_k )
  \,.
\end{equation}
This implies~$Z^\epsilon$ is a Markov process, and we will now show that its transition density can be minorized on cubes.

Using~\eqref{e:bump-lower} and the definition of $Z^\epsilon$ we will show (see Lemma~\ref{l:p1z-doeblin}, below) that the density $z \mapsto p^{Z^\epsilon}_1(x,y)$ of $Z^\epsilon$ after one step is bounded below by a constant over the cube $Q(X^0_{1+ \theta^\epsilon}(x))$.
If we directly use this along with Proposition~\ref{p:KV-variance} we only get an~$O(1)$ lower bound for the asymptotic variance of~$Z^\epsilon$.
This isn't sufficient to complete the proof of Theorem~\ref{t:residual-diffusivity}.
However, if we iterate this process once, and obtain a lower bound on the two step transition density of~$Z^\epsilon$, we obtain a lower bound that is sufficient to complete the proof of Theorem~\ref{t:residual-diffusivity}.
Precisely, we will prove the following lower bound.

\begin{lemma}\label{l:p2z-doeblin}
  There exists a constant $c > 0$ such that the two step transition density of~$Z^\epsilon$ satisfies
  \begin{equation}\label{e:p2lower-yhat}
    p_2^{Z^\epsilon}(x, y)
      \geq c \check w^\epsilon( X^0_{1 + \theta^\epsilon}(x), \floor{y} )\,, \quad
      \text{for all } x,y \in \R^d
    \,.
  \end{equation}
  Here~$\check w^\epsilon$ is defined by
  \begin{equation}\label{e:wEpCheckDef}
    \check w^\epsilon(z, k)
      \defeq \P( X^0_{1 + \theta^\epsilon}(U) \in Q_k \given U \sim \unif(Q_{\floor{z}} ), \quad z \in \mathbb{R}^d, \; k \in \mathbb{Z}^d
    \,.
  \end{equation}
\end{lemma}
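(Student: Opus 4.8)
The plan is to build the two-step bound out of the one-step bound for $Z^\epsilon$. First I would establish (this is the content of the referenced Lemma~\ref{l:p1z-doeblin}) that, from \eqref{e:bump-lower} and the definition \eqref{e:ZepDef} of $Z^\epsilon$,
\begin{equation}\label{e:p1z-plan}
  p_1^{Z^\epsilon}(x, y)
    = \sum_{i=1}^M \abs{E_i}\, p^{X^\epsilon}_{1 + \theta^\epsilon(x)}\paren[\big]{x, y + \sigma_0(i)}
    \geq \chi \sum_{i=1}^M \abs{E_i}\, \bm F_*(y)\, \one_{Q(X^0_{1+\theta^\epsilon}(x)) - \sigma_0(i)}(y)
    \,.
\end{equation}
Since $\bm F_*$ is periodic, $\bm F_*(y) \geq \delta_0 > 0$ uniformly, so the right side dominates $\chi\delta_0 \abs{E_i}$ on each cube $Q(X^0_{1+\theta^\epsilon}(x)) - \sigma_0(i)$. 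The key point is that $Q(X^0_{1+\theta^\epsilon}(x)) - \sigma_0(i)$ is again a unit cube with vertices in $\Z^d$, so $p_1^{Z^\epsilon}(x,\cdot)$ is bounded below by a positive constant on a \emph{union of integer unit cubes} — namely on $\bigcup_i \paren{Q(X^0_{1+\theta^\epsilon}(x)) - \sigma_0(i)}$, which by Assumption~\ref{a:phi}\eqref{a:int-cube} and the definition \eqref{e:I1-def} of the $I_k$ is exactly the collection of integer translates of $Q_0$ that arise.

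Second, I would chain two steps via Chapman--Kolmogorov:
\begin{equation}
  p_2^{Z^\epsilon}(x, y)
    = \int_{\R^d} p_1^{Z^\epsilon}(x, z)\, p_1^{Z^\epsilon}(z, y)\, dz
    \geq c_1 \int_{A(x)} p_1^{Z^\epsilon}(z, y)\, dz
    \,,
\end{equation}
where $A(x) = \bigcup_i \paren{Q(X^0_{1+\theta^\epsilon}(x)) - \sigma_0(i)}$ from \eqref{e:p1z-plan}. The crucial structural fact is that $A(x)$ contains the full unit cube $Q_{m}$ with $m = \floor{X^0_{1+\theta^\epsilon}(x)} - \sigma_0(i_0)$ for at least one $i_0$ — in particular it contains an integer unit cube. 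Restricting the integral to that cube $Q_m$ and using that $X^\epsilon_{1+\theta^\epsilon}$ from a uniform start on a unit cube has a density comparable (via \eqref{e:bump-lower} again, integrated) to the distribution $\check w^\epsilon$, we obtain
\begin{equation}
  p_2^{Z^\epsilon}(x, y)
    \geq c_1 \int_{Q_m} p_1^{Z^\epsilon}(z, y)\, dz
    \geq c\, \P\paren[\big]{Z^\epsilon_1 \in Q_{\floor{y}} \given Z^\epsilon_0 \sim \unif(Q_m)}
    = c\, \check w^\epsilon(X^0_{1+\theta^\epsilon}(x), \floor{y})
    \,,
\end{equation}
where the last equality uses that $Z^\epsilon_1$ started uniformly on $Q_m$ has the same cube-occupation law as $X^0_{1+\theta^\epsilon}(U)$ for $U \sim \unif(Q_m)$ — this is precisely \eqref{e:wEpCheckDef}, after absorbing the deterministic $\sigma_0$ shift using $\Z^d$-periodicity of $\theta^\epsilon$ and the defragmenting shift in \eqref{e:ZepDef}. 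Tracking that all constants ($\chi$, the lower bound $\delta_0$ on $\bm F_*$, and the volume factors $\abs{E_i}$) are $\epsilon$-independent gives the uniform $c > 0$.

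The main obstacle is the bookkeeping in the second step: verifying that after one step $Z^\epsilon$ genuinely covers a \emph{full} integer unit cube with a uniform lower bound, rather than merely a union of cubes each weighted by a possibly small $\abs{E_i}$, and that the resulting density on that cube is comparable to the uniform density so that a second application reproduces exactly the law $\check w^\epsilon$ rather than some smeared-out variant. This is where the explicit choice of $\bm F_*$ (periodic, bounded below) and the specific role of the $\sigma_0(i)$-shifts in $Z^\epsilon$ — engineered precisely to realign the fragmented pieces onto integer cubes — must be used carefully; Assumption~\ref{a:phi}\eqref{a:boundary} is what guarantees $\bm F_*$ can be taken strictly positive everywhere, closing the argument.
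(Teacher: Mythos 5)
Your overall skeleton --- a one-step minorization of $p_1^{Z^\epsilon}(x,\cdot)$ on an integer unit cube, followed by Chapman--Kolmogorov, with the integral of $\one_{Q(X^0_{1+\theta^\epsilon}(x'))}(y)$ over $x'$ in the starting cube producing exactly $\check w^\epsilon$ --- is the same as the paper's, which simply invokes its Lemma~\ref{l:p1z-doeblin} ($p_1^{Z^\epsilon}(x,y)\geq c\,\one_{Q(X^0_{1+\theta^\epsilon}(x))}(y)$) twice. However, your derivation of that one-step bound contains a genuine error: you assert that ``since $\bm F_*$ is periodic, $\bm F_*(y)\geq \delta_0>0$ uniformly.'' This is false. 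By~\eqref{e:F-star-def}, $\bm F_*(x)=\tfrac{2}{\pi}\prod_i|\sin(\pi x_i)|$ vanishes on $\bigcup_k \partial Q_k$, and the paper says so explicitly. Consequently \eqref{e:bump-lower} only gives a lower bound that degenerates at cube boundaries, and your claimed uniform lower bound $c_1$ on the union of cubes $A(x)$ does not follow. This matters downstream: a two-step bound of the form $c\,\bm F_*(y)\,\check w^\epsilon(\cdot,\floor{y})$ is \emph{not} of the form required by the minorization condition~\eqref{e:doeblin}, which must hold pointwise with a $y$-constant-on-cubes minorant.

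The missing idea is exactly the content of the paper's Lemma~\ref{l:Zprime-lower} and the reason $Z^\epsilon$ uses $2+\theta^\epsilon$ steps of $X^\epsilon$ (not $1+\theta^\epsilon$, as in your formula for $p_1^{Z^\epsilon}$) plus the shift $-I$: after the bump bound $c\,\bm F_*\one_{Q_k}$ is achieved at time $1+\theta^\epsilon$, one applies $\varphi$ \emph{once more}, averages over the branch via the random shift $I$, and convolves with the Gaussian. The averaged pushforward density $g(z)=\sum_i|E_i|\,\bm F_*\circ\hat\varphi_i^{-1}(z)$ is strictly positive on the \emph{closed} cube because Assumption~\ref{a:phi}~(\ref{a:boundary}) guarantees every boundary point of $Q_0$ has a preimage in the interior of some cube, where $\bm F_*>0$; continuity then gives $\min_{\bar Q_0}g>0$ and hence a genuinely uniform lower bound on a full cube. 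You cite Assumption~\ref{a:phi}~(\ref{a:boundary}) but misattribute its role (it does not make $\bm F_*$ positive; it makes the one-more-step pushforward positive). A smaller issue: in your final step the identity $\P(Z^\epsilon_1\in Q_{\floor y}\mid Z^\epsilon_0\sim\unif(Q_m))=\check w^\epsilon(\cdot,\floor y)$ is not an equality (the law of $Z^\epsilon_1$ involves noise and the shift $I$, not just the deterministic map); what one actually uses is the second application of the deterministic-cube minorization, so that only the indicator $\one_{Q(X^0_{1+\theta^\epsilon}(x'))}(y)$ appears under the integral and $\check w^\epsilon$ emerges directly from its definition~\eqref{e:wEpCheckDef}.
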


Because $\check w^\epsilon$ depends on $z$ only via the integer part $\floor{z}$, the covariance matrix $\mathcal{D}_{\check w^\epsilon(x)}$ does not depend on $x$: $\bar{\mathcal D}_{\check w^\epsilon} = \mathcal{D}_{\check w^\epsilon(x)}$ for all $x$, so we write simply $\mathcal{D}_{\check w^\epsilon}$. Proposition~\ref{p:KV-variance} can now be used to show estimate the asymptotic variance of~$Z^\epsilon$ in terms of the diffusivity matrix~$\bar{\mathcal D}_{\check w^\epsilon}$.   This matrix can be computed explicitly as an~$\epsilon$-independent matrix times an~$O(\abs{\ln \epsilon})$ factor.
This is our next result.

\begin{lemma}\label{l:DwCheck}
Let $w^\epsilon(x,y) = \check w^\epsilon( X^0_{1 + \theta^\epsilon}(x), \floor{y} )$.  The covariance of $y \mapsto w^\epsilon(x,y)$ does not depend on $x$, and for all $\epsilon > 0$, this covariance matrix $\mathcal{D}_{w^\epsilon}$ is given by
\begin{equation}\label{e:DwCheck1}
    \mathcal{D}_{w^\epsilon}
      = (1 + \bar \theta^\epsilon) \mathcal D_{w^0}
  \end{equation}
  where  
  \begin{gather}
    \bar \theta^\epsilon
      \defeq \int_{y \in Q_0} \theta^\epsilon(y)  \, dy
    \,,
    \\
    w^0(m, k)
      \defeq \P( \varphi(U) \in Q_k \given U \sim \unif(Q_m) ), \quad m,k \in \Z^d
    \,.
  \end{gather}

\end{lemma}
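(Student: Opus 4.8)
The plan is to decompose the map $X^0_{1+\theta^\epsilon}$ into a $\theta^\epsilon$-step deterministic evolution followed by one application of $\varphi$, and exploit the fact that the expanding Bernoulli structure makes the intermediate step a uniform measure again. First I would unwind the definition: for $U \sim \unif(Q_{\floor{z}})$, we have $X^0_{1+\theta^\epsilon}(U) = \varphi\paren{X^0_{\theta^\epsilon}(U)}$, and since $\floor{\cdot}$ and $\varphi$ commute with integer shifts I may as well take $\floor{z} = 0$ and $U \sim \unif(Q_0)$. The key structural observation is that $\theta^\epsilon$ is constant on each cell of the partition of $Q_0$ induced by the first $m$ iterates of $\varphi$ (because $\abs{\det D\varphi}$ is piecewise constant and $\theta^\epsilon$ is defined by the first time the product of these Jacobians exceeds $\epsilon^{-d}$), and on each such cell $X^0_{\theta^\epsilon}$ is an affine bijection onto a full unit cube $Q_n$ for some $n \in \Z^d$ — this is where Assumption~\ref{a:phi}\eqref{a:int-cube} is used. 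Consequently, conditioned on the cell, $X^0_{\theta^\epsilon}(U)$ is uniformly distributed on the corresponding unit cube $Q_n$.

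Next I would set up the two-stage sampling. Write $\nu^\epsilon_m = \P(\floor{X^0_{\theta^\epsilon}(U)} = m \mid U \sim \unif(Q_0))$ for the distribution over which unit cube we land in after $\theta^\epsilon$ steps. By the observation above, the conditional law of $X^0_{\theta^\epsilon}(U)$ given $\floor{X^0_{\theta^\epsilon}(U)} = m$ is $\unif(Q_m)$. Therefore, for any $k \in \Z^d$,
\begin{equation}
  \check w^\epsilon(0, k) = \sum_{m \in \Z^d} \nu^\epsilon_m \, \P\paren{\varphi(V) \in Q_k \given V \sim \unif(Q_m)} = \sum_{m \in \Z^d} \nu^\epsilon_m \, w^0(m, k) = \sum_{m \in \Z^d} \nu^\epsilon_m \, w^0(0, k - m)\,,
\end{equation}
using $\Z^d$-periodicity of $\varphi$'s displacement for the last equality. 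In other words, the law $\check w^\epsilon(0, \cdot)$ is the convolution $\nu^\epsilon * w^0(0, \cdot)$. For the covariance this gives $\mathcal D_{w^\epsilon} = \mathcal D_{\nu^\epsilon} + \mathcal D_{w^0}$, since the covariance of a sum of independent $\Z^d$-valued random variables is the sum of covariances (the two stages are independent by construction). So it remains to identify $\mathcal D_{\nu^\epsilon}$ with $\bar\theta^\epsilon \, \mathcal D_{w^0}$.

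For this last identification I would argue that the displacement accumulated over the deterministic run up to time $\theta^\epsilon$ is itself a sum of the per-step displacements, each of which has the law $w^0(0, \cdot)$ (shifted to mean zero appropriately) and, crucially, successive steps are independent with this law because each iterate of $\varphi$ on a uniform cube produces, on each sub-cell, a uniform distribution on a new unit cube — the Bernoulli property — so the whole process $\floor{X^0_n(U)}$ is exactly a random walk with i.i.d.\ increments distributed as $w^0(0,\cdot)$. The number of steps is the random time $\theta^\epsilon(U)$; but here one must be careful, since $\theta^\epsilon$ is a function of the trajectory (a stopping time for this random walk), not an independent number. Using Wald's second identity (or the optional stopping theorem applied to the martingale $\abs{S_n - n\mu}^2 - n\,\mathrm{tr}\,\mathcal D_{w^0}$, componentwise), together with the fact that $\theta^\epsilon$ has finite expectation $\bar\theta^\epsilon$ (it is bounded by $C\abs{\ln\epsilon}$ by the expansion hypothesis), yields $\mathcal D_{\nu^\epsilon} = \bar\theta^\epsilon \,\mathcal D_{w^0}$. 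Combining, $\mathcal D_{w^\epsilon} = (1 + \bar\theta^\epsilon)\mathcal D_{w^0}$, as claimed; the $x$-independence is immediate since everything depended on $x$ only through $\floor{X^0_{1+\theta^\epsilon}(x)}$, and shifting the starting cube merely shifts the whole law by a constant, leaving the covariance unchanged.

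The main obstacle I anticipate is the last step: justifying that $\theta^\epsilon$ interacts with the random walk $\floor{X^0_n(U)}$ cleanly enough to apply a Wald-type identity. One must verify that $\theta^\epsilon(U)$ is genuinely a stopping time with respect to the natural filtration of the cell-refinement process (it is, since the product of Jacobians up to step $m$ is determined by which cell of the depth-$m$ refinement $U$ lies in), confirm the increments $\floor{X^0_{n+1}(U)} - \floor{X^0_n(U)}$ are i.i.d.\ with the $w^0$-law conditionally on the past (the Bernoulli/measure-preserving property), and handle the mean-correction carefully if $w^0(0,\cdot)$ is not centered. The bookkeeping around the "$+1$" (the final $\varphi$ step versus the $\theta^\epsilon$ deterministic steps, and the "$1 + \theta^\epsilon$" in the statement) must also be tracked precisely, but that is routine once the convolution/independence structure above is in place.
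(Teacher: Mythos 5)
Your proposal is correct and follows essentially the same route as the paper's proof: both arguments rest on the observation that the Bernoulli structure (via Assumption~\ref{a:phi}, item (\ref{a:int-cube})) makes $K_n = \floor{X^0_n(U)}$ a random walk with i.i.d.\ increments of law $w^0(0,\cdot)$, that $\tau = 1+\theta^\epsilon(U)$ is a stopping time for the cylinder filtration, and then on Wald's second identity to get $\cov(K_\tau) = (\E\tau)\,\mathcal D_{w^0}$. Your intermediate convolution step (peeling off the final application of $\varphi$) is a cosmetic repackaging of the same idea, and the mean-correction issue you flag when $w^0(0,\cdot)$ is not centered is equally present in the paper's one-line invocation of Wald, so you are if anything slightly more careful on that point.
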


Lemmas~\ref{l:p2z-doeblin} and~\ref{l:DwCheck} will be proved in Section~\ref{s:minorizing}, below.

\subsection{Proof of Theorem \ref{t:residual-diffusivity}}

We will now prove Theorem~\ref{t:residual-diffusivity}.
Proposition~\ref{p:KV-variance} and Lemma~\ref{l:p2z-doeblin} immediately give a lower bound on the asymptotic variance of~$Z^\epsilon$.
To obtain the asymptotic variance of~$X^\epsilon$ from~$Z^\epsilon$, we need to undo the shifts by the process~$I$, and time change by~$N^\epsilon$.
We carry out the details here.

\begin{proof}[Proof of Theorem~\ref{t:residual-diffusivity}]
  Here and subsequently we use the convention that~$C < \infty$ is a large $\epsilon$-independent constant whose value is unimportant and may increase from line to line.
  We also use $c > 0$ to denote a small $\epsilon$-independent constant whose value is unimportant and may decrease from line to line.
  Define the family of stopping times~$M^\epsilon_n$ by
  \begin{equation}
    M^\epsilon_n = \inf \set{ k \in 2 \N \st N^\epsilon_{k} \geq \abs{\ln \epsilon} n }, \quad \quad n=1,2,3,\dots
    \,.
  \end{equation}
  We claim 
  \begin{equation}\label{e:MepBounds}
    c n \leq M^\epsilon_n \leq C n, 
     \quad \quad \forall \; n=1,2,3,\dots.
  \end{equation}
  To see this, note
  \begin{equation}\label{e:dPhiBound}
    \frac{1}{\abs{E_M}} \leq \abs{ \det D\varphi} \leq \frac{1}{\abs{E_1}}\,,
  \end{equation}
  which implies
  \begin{equation}\label{e:theta-bound}
    \frac{d \ln \epsilon }{\ln \abs{E_1} }
      \leq \theta^\epsilon(x)
      < \frac{d \ln \epsilon }{\ln \abs{E_M} } + 1
    \,,
  \end{equation}
  for every~$x \in \R^d$.
  This in turn implies
  \begin{equation}\label{e:Nep1}
    n \paren[\Big]{2 + \frac{d \ln \epsilon }{\ln \abs{E_1} }}
      \leq N^\epsilon_n
      < n\paren[\Big]{ 3 + \frac{d \ln \epsilon }{\ln \abs{E_M} }} 
  \end{equation}
  and immediately yields~\eqref{e:MepBounds} as claimed.

Now, consider the Markov chain $Z^\epsilon_{2n}$.
By Lemma~\ref{l:p2z-doeblin}, the transition density for $Z^\epsilon_{2n}$ satisfies the minorization condition required in Corollary~\ref{c:KV-stopped}, with $w^\epsilon(x,k) = \check w^\epsilon( X^0_{1 + \theta^\epsilon}(x), k )$. 
By~\eqref{e:MepBounds}, $M^\epsilon$ is a bounded sequence of stopping times, and so Corollary~\ref{c:KV-stopped} applied to the process $Z^\epsilon_{2n}$ gives
  \begin{equation}\label{e:avZep-lower}
    \lim_{n \to \infty} \frac{1}{n} \var(v \cdot Z^\epsilon_{M^\epsilon_n})
      \geq c v \bar{\mathcal D}_{\check w^\epsilon} v
      = c (1 + \bar \theta^\epsilon) v \mathcal D_{w^0} v
      \,,
  \end{equation} 
  where the last equality above followed from~\eqref{e:DwCheck1}.  

  Next we note that~\eqref{e:ZMinusX} and independence of~$I$ and~$Y$ imply 
  \begin{equation}
    \var(v \cdot X^\epsilon_{N^\epsilon_{M^\epsilon_n}})
      = \var(v \cdot Z^\epsilon_{M^\epsilon_n})
	+ \E M^\epsilon_n \var(v \cdot I_1)
      \,.
  \end{equation}
  Combining this with~\eqref{e:MepBounds} and~\eqref{e:avZep-lower} implies
  \begin{equation}\label{e:varYep}
    \lim_{n \to \infty} \frac{1}{n} \var(v \cdot X^\epsilon_{N^\epsilon_{M^\epsilon_n}})
      \geq c (1 + \bar \theta^\epsilon) v \mathcal D_{w^0} v - C \abs{v}^2
      \,.
  \end{equation} 

  The above computes variance of $X^\epsilon$ evaluated at a sequence of stopping times~$N^\epsilon_{M^\epsilon_n}$.
  In order to compute the variance of $X^\epsilon$ along a sequence of deterministic times we use~\eqref{e:theta-bound} to note
  \begin{equation}\label{e:Nep2}
    \frac{d\ln \epsilon }{\ln \abs{E_1} } + 1
      < N^\epsilon_{n+1} - N^\epsilon_n
      \leq \frac{d\ln \epsilon }{\ln \abs{E_M} } + 2
    \,.
  \end{equation}
  This implies
  \begin{equation}\label{e:nep-NM}
    n^\epsilon
      \leq N^\epsilon_{M^\epsilon_n}
      \leq n^\epsilon + \frac{2 d \ln\epsilon}{\ln \abs{E_M}} + 2
    \qquad\text{where}\quad
    n^\epsilon \defeq \floor{n \abs{\ln \epsilon}}
    \,,
  \end{equation}
  showing $N^\epsilon_{M^\epsilon_n}$ differs from the deterministic time~$n^\epsilon$ by an $O(\abs{\ln \epsilon})$ time that does not grow with~$n$.

  This allows us to estimate $\var(X^\epsilon_{n^\epsilon})$ as follows.
  First note,
  \begin{equation}\label{e:tmp-2024-03-02-2}
    \var(v \cdot X^\epsilon_{n^\epsilon})
      \geq \frac{1}{2} \var( v \cdot X^\epsilon_{N^\epsilon_{M^\epsilon_n}} )
	- 2 \var( v \cdot (X^\epsilon_{N^\epsilon_{M^\epsilon_n}} - X^\epsilon_{n^\epsilon} ))
    \,.
  \end{equation}
  Next we note~\eqref{e:Xn} implies
  \begin{equation}
    \abs{X^\epsilon_{n+1} - X^\epsilon_n}
      \leq \sup_{x \in \R^d} \abs{\varphi(x) - x} + \epsilon \abs{\xi_{n+1}}
  \end{equation}
  and hence
  \begin{equation}
\abs{X^\epsilon_{N^\epsilon_{M^\epsilon_n}}
	- X^\epsilon_{n^\epsilon}}
      \leq C (N^\epsilon_{M^\epsilon_n} - n^\epsilon)
	+ \epsilon \sum_{k = n^\epsilon}^{N^\epsilon_{M^\epsilon_n}} \abs{\xi_k}
    \,.
  \end{equation} 
  Using~\eqref{e:nep-NM} this implies
  \begin{equation}\label{e:tmp-2024-03-02-1}
    \E \abs{X^\epsilon_{N^\epsilon_{M^\epsilon_n}} - X^\epsilon_{n^\epsilon}}^2
      \leq C \abs{\ln \epsilon}^2
    \,.
  \end{equation}
  Substituting~\eqref{e:tmp-2024-03-02-1} in~\eqref{e:tmp-2024-03-02-2} gives
  \begin{equation}
    \var(v \cdot X^\epsilon_{n^\epsilon})
      \geq \frac{1}{2} \var( v \cdot X^\epsilon_{N^\epsilon_{M^\epsilon_n}})
	- C\abs{v}^2 \abs{\ln \epsilon}^2
    \,.
  \end{equation}

  Hence
  \begin{align}
    \lim_{n \to \infty} \frac{\var(X^\epsilon_n)}{n}
      &= \lim_{n \to \infty} \frac{\var(X^\epsilon_{n^\epsilon})}{n^\epsilon}
      \geq \lim_{n \to \infty}
	\frac{1}{n^\epsilon} \paren[\Big]{
	  \frac{1}{2} \var( v \cdot X^\epsilon_{N^\epsilon_{M^\epsilon_n}} )
	    - C \abs{v}^2 \abs{\ln \epsilon}^2
	}
    \\
      &= \frac{1}{2}
	  \lim_{n \to \infty}
	    \frac{\var( v \cdot X^\epsilon_{N^\epsilon_{M^\epsilon_n}} )}{n}
	  \lim_{n \to \infty}
	    \frac{n}{n^\epsilon}
    \\
      &\geq \frac{1}{\abs{\ln\epsilon}}
      \paren[\big]{
	c (1 + \bar\theta^\epsilon) v \mathcal D_{w^0} v
	- C \abs{v}^2
      }
    \,,
  \end{align}
  where the last inequality above followed from~\eqref{e:varYep}.
  Using~\eqref{e:theta-bound} this implies
  \begin{equation}
    \lim_{n \to \infty} \frac{\var(X^\epsilon_n)}{n}
      \geq c v \mathcal D_{w^0} v
	- \frac{C \abs{v}^2}{\abs{\ln\epsilon}}
      \,,
  \end{equation}
  which immediately implies~\eqref{e:residual-diffusivity} and concludes the proof.
\end{proof}

\section{A lower bound on the asymptotic variance (Proposition \ref{p:KV-variance})}\label{s:KV}
In this section we prove Proposition~\ref{p:KV-variance} using an idea that can be traced back to Kipnis and Varadhan~\cite{KipnisVaradhan86} and has been widely used in the analysis of markov processes~\cite{KomorowskiLandimEA12}. 
The result of Kipnis and Varadhan~\cite{KipnisVaradhan86} gives an explicit formula for the asymptotic variance, in terms of a corrector.
In the context of~\eqref{e:Xn} when~$\varphi$ is chaotic, the corrector is not well behaved and it is not easy to obtain enough bounds on the corrector which are good enough to yield the lower bound in Theorem~\ref{t:residual-diffusivity}.
We will, instead, show that if a minorizing condition holds then one can obtain bounds on the asymptotic variance \emph{without} requiring any bounds on the corrector.

Let~$Y$ be a Markov process that satisfies the assumptions in Proposition~\ref{p:KV-variance}.  The strategy is to write $Y_n$ as $M_n + n \bar s - \chi(Y_n)$ where each component of the vector-valued process $M_n$ is a martingale, $\bar s$ is a constant drift, and $\chi(y)$ is a bounded function.  Let~$\mathcal L$ be the operator
\begin{equation}
  \mathcal Lf(y)
    = \E^y (f(Y_1)  - f(y))
    = \int_{\R^d} p^Y_1( y, z) (f(z) - f(y)) \, dz
    \,.
\end{equation}
In view of the assumptions on the density $p$, this is well-defined for any measureable function $f$ that satisfies a bound of the form $|f(y)| \leq C (1 + |y|^2)$, for $y \in \R^d$. 

\begin{lemma}\label{l:zeta}
  There exists a bounded, $\Z^d$-periodic function $\chi \colon \R^d \to \R^d$ and a vector $\bar s \in \R^d$ such that the function~$\zeta$ defined by
  \begin{equation}\label{e:zeta}
    \zeta(y) \defeq y + \chi(y)
  \end{equation}
  satisfies
  \begin{gather}\label{e:Lf-equals-D}
    \mathcal L \zeta  = \bar s \,.
  \end{gather}
\end{lemma}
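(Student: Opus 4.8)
The plan is to recast $\mathcal L\zeta = \bar s$ as a discrete Poisson equation on the torus and to solve it by a Neumann series, in the spirit of Kipnis--Varadhan. Writing $\zeta(y) = y + \chi(y)$ and using linearity, $\mathcal L\zeta(y) = b(y) + \mathcal L\chi(y)$, where $b(y) \defeq \int_{\R^d} p^Y(y,z)(z-y)\,dz = \E^y(Y_1 - Y_0)$ is the one-step mean displacement. The shift-invariance~\eqref{e:shift-invariant} makes $b$ a $\Z^d$-periodic function, and since, by the same periodicity, $\sup_{y \in \R^d}\E^y\abs{Y_1-Y_0}^2 = \sup_{y \in Q_0}\E^y\abs{Y_1-Y_0}^2 < \infty$, Jensen's inequality shows $b$ is bounded. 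If $\chi$ is taken $\Z^d$-periodic, then by~\eqref{e:shift-invariant}, \eqref{e:leb-inv-prime} and~\eqref{e:ptildeY}, both $b$ and $\mathcal L\chi$ descend to functions on $\T^d$, with $\mathcal L\chi$ corresponding to $\tilde{\mathcal L}\tilde\chi$, where $\tilde{\mathcal L} = P - I$ and $P$ is the transition operator of $\tilde Y$ on $\T^d$. Hence the identity $\mathcal L\zeta = \bar s$ reduces to the Poisson equation $\tilde{\mathcal L}\tilde\chi = \bar s - \tilde b$ on $\T^d$.

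The key observation is that the minorization~\eqref{e:doeblin} descends to a \emph{uniform} Doeblin condition on the torus. Summing~\eqref{e:doeblin} over $k \in \Z^d$ and using~\eqref{e:ptildeY}, the identity $\floor{z+k} = \floor{z} + k$, and~\eqref{e:w-periodic},
\[
  p^{\tilde Y}(\tilde y, \tilde z)
    = \sum_{k \in \Z^d} p^Y(y, z+k)
    \ge \beta \sum_{k \in \Z^d} w(y, \floor{z} + k)
    = \beta \sum_{m \in \Z^d} w(y, m)
    = \beta
\]
for every $\tilde y, \tilde z \in \T^d$. Since the invariant measure of $\tilde Y$ is the unit-mass Lebesgue measure on $\T^d$, the standard Doeblin coupling then gives geometric ergodicity: $\norm{p^{\tilde Y}_n(\tilde y,\cdot) - 1}_{L^1(\T^d)} \le 2(1-\beta)^n$ for every $n \in \N$ and $\tilde y \in \T^d$ (in particular $\tmix(\tilde Y) < \infty$, and~\eqref{e:pnYtildeGeom} applies).

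Finally, I would fix $\bar s$ to meet the Fredholm solvability condition and build $\tilde\chi$ by summation. Set $\bar s \defeq \int_{Q_0} b(y)\,dy = \int_{\T^d}\tilde b\,d\tilde y$, so that $h \defeq \bar s - \tilde b$ has zero mean on $\T^d$; then $P^n h(\tilde y) = \int_{\T^d}(p^{\tilde Y}_n(\tilde y,\tilde z) - 1)h(\tilde z)\,d\tilde z$, hence $\norm{P^n h}_{L^\infty(\T^d)} \le 2(1-\beta)^n\norm{h}_{L^\infty(\T^d)}$ and $\tilde\chi \defeq -\sum_{n \ge 0} P^n h$ converges uniformly to a bounded function with $\norm{\tilde\chi}_{L^\infty(\T^d)} \le 2\beta^{-1}\norm{h}_{L^\infty(\T^d)}$. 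Since $P^n h \to 0$, the sum telescopes to $(P-I)\tilde\chi = h$, i.e.\ $\tilde{\mathcal L}\tilde\chi = \bar s - \tilde b$; lifting $\tilde\chi$ to its $\Z^d$-periodic extension $\chi$ on $\R^d$ and setting $\zeta(y) = y + \chi(y)$ yields $\mathcal L\zeta = b + (\bar s - b) = \bar s$, which is~\eqref{e:Lf-equals-D}. The only non-routine points are the reduction of~\eqref{e:doeblin} to the uniform torus Doeblin bound (which is precisely what furnishes the summability of the Neumann series) and the boundedness of $b$; I do not expect the choice of $\bar s$ or the convergence argument to present any difficulty. This construction also produces the bound $\norm{\chi}_{L^\infty(\R^d)} < \infty$ that the subsequent martingale decomposition of $Y_n$ requires.
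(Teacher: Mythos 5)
Your proposal is correct and follows essentially the same route as the paper: you take $\bar s$ to be the cell average of the one-step drift and build the corrector as the Neumann series $\chi = \sum_{n\ge 0} P^n(s-\bar s)$ for the projected chain on $\T^d$, with summability coming from the geometric mixing implied by the minorization~\eqref{e:doeblin}, and then verify the Poisson equation by telescoping. The only cosmetic difference is that you derive the uniform torus bound $p^{\tilde Y}(\tilde y,\tilde z)\ge\beta$ explicitly and invoke Doeblin coupling, whereas the paper reaches the same geometric decay by citing the standard fact $\tmix(\tilde Y)\le 1/\beta$.
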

\begin{remark}
  Here $\zeta(y) = (\zeta_1(y),\dots,\zeta_d(y)) \in \R^d$,
  and so the equation $\mathcal L \zeta  = \bar s$ means $\mathcal L \zeta_i  = \bar s_i$ holds for every $i = 1,\dots,d$.
\end{remark}

Lemma~\ref{l:zeta} can be proved by writing down an explicit series representation for~$\zeta$.
To avoid distracting from the main idea, we momentarily postpone the proof.
For any $v \in \R^d$, define the function $V_v \colon \R^d \to \R$ by
\begin{equation}
  V_v(y) \defeq
    \E^y (v \cdot (\zeta(Y_1) - \zeta(Y_0) - \bar s) )^2
    = \var( v \cdot \zeta(Y_1) \given Y_0 = y  )
    \,.
\end{equation}
Since the transition density of~$Y$ is invariant under $\Z^d$ shifts~\eqref{e:shift-invariant}, the function $V_v$ is $\Z^d$-periodic.
We will now show that the asymptotic variance of~$v \cdot Y$ is exactly the integral of~$V_v$.

\begin{theorem}[Kipnis, Varadhan~\cite{KipnisVaradhan86}]\label{t:kipnis-varadhan}
  For any~$v \in \R^d$ we have
  \begin{equation}\label{e:var-v.Yn}
    \lim_{n \to \infty} \frac{ \var(v \cdot Y_n) }{n}
      = \int_{Q_0} V_v(y) \, dy
      \,.
  \end{equation}
\end{theorem}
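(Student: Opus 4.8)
The plan is to carry out the Kipnis--Varadhan martingale decomposition, for which Lemma~\ref{l:zeta} supplies precisely the ingredient we need. Fix $v \in \R^d$, let $\mathcal F_n = \sigma(Y_0, \dots, Y_n)$, and set
\begin{equation}
  M_n \defeq v \cdot \bigl( \zeta(Y_n) - \zeta(Y_0) \bigr) - n \, v \cdot \bar s \,.
\end{equation}
The Markov property and the identity $\mathcal L \zeta = \bar s$ from Lemma~\ref{l:zeta} give $\E[\,v \cdot \zeta(Y_{n+1}) \mid \mathcal F_n\,] = v \cdot \zeta(Y_n) + v \cdot \bar s$, so $M_n$ is an $\mathcal F_n$-martingale with $M_0 = 0$. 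Its increments $D_k \defeq M_{k+1} - M_k = v \cdot ( \zeta(Y_{k+1}) - \zeta(Y_k) - \bar s )$ satisfy $\E[\,D_k \mid \mathcal F_k\,] = 0$ and, directly from the definition of $V_v$, $\E[\,D_k^2 \mid \mathcal F_k\,] = V_v(Y_k)$. Since $\chi$ is bounded, and the finite-variance hypothesis on $Y$ together with the $\Z^d$-shift invariance~\eqref{e:shift-invariant} gives $\sup_{y \in Q_0} \E^y \abs{Y_1 - Y_0}^2 < \infty$, the function $V_v$ is bounded on $\R^d$ and each $D_k$ lies in $L^2$. Orthogonality of martingale increments then yields $\var(M_n) = \E[M_n^2] = \sum_{k=0}^{n-1} \E[ V_v(Y_k) ]$.

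Next I would show $\var(v \cdot Y_n)$ and $\var(M_n)$ differ by $o(n)$. From $\zeta(y) = y + \chi(y)$ we get $v \cdot Y_n = M_n - v \cdot \chi(Y_n) + R_n$, where $R_n \defeq v \cdot (Y_0 + \chi(Y_0)) + n \, v \cdot \bar s$; here $n\, v \cdot \bar s$ is deterministic and $R_n - n\, v \cdot \bar s$ is $\mathcal F_0$-measurable with finite variance. Since $M_n$ is a martingale started from $0$, $\cov(M_n, W) = 0$ for every $\mathcal F_0$-measurable $W \in L^2$; moreover $\var(v \cdot \chi(Y_n)) \le \abs{v}^2 \norm{\chi}_\infty^2$, so $\abs{\cov(M_n, v \cdot \chi(Y_n))} \le \abs{v} \, \norm{\chi}_\infty \, \var(M_n)^{1/2} = O(n^{1/2})$ by Cauchy--Schwarz. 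Expanding $\var(v \cdot Y_n)$ and discarding the $O(1)$ and $O(n^{1/2})$ terms gives $\var(v \cdot Y_n) = \var(M_n) + O(n^{1/2})$, hence
\begin{equation}
  \frac{\var(v \cdot Y_n)}{n} = \frac{1}{n} \sum_{k=0}^{n-1} \E[ V_v(Y_k) ] + o(1) \,.
\end{equation}

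Finally I would identify the limit of the Cesàro average. Since $V_v$ is $\Z^d$-periodic, $V_v(Y_k) = V_v(\tilde Y_k)$, so it suffices to average $V_v$ along the torus chain $\tilde Y$. Projecting the minorization~\eqref{e:doeblin} to $\T^d$ via~\eqref{e:ptildeY}, and using $\floor{z+k} = \floor{z} + k$ together with $\sum_{m \in \Z^d} w(y, m) = 1$, gives $p^{\tilde Y}(\tilde y, \tilde z) \ge \beta$ for all $\tilde y, \tilde z \in \T^d$: a uniform Doeblin condition on $\T^d$ with reference measure the Lebesgue measure. Consequently $\sup_{\tilde y} \norm{ p_k^{\tilde Y}(\tilde y, \cdot) - 1 }_{L^1(\T^d)} \le 2(1 - \beta)^k$, and since $V_v$ is bounded this forces $\E[ V_v(Y_k) ] = \E[ V_v(\tilde Y_k) ] \to \int_{\T^d} V_v = \int_{Q_0} V_v$ as $k \to \infty$, regardless of the initial distribution. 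Cesàro summation then gives $\tfrac1n \sum_{k=0}^{n-1} \E[ V_v(Y_k) ] \to \int_{Q_0} V_v(y)\, dy$, which together with the previous step proves~\eqref{e:var-v.Yn}. The hard analytic content --- the existence of a \emph{bounded} corrector --- has already been isolated in Lemma~\ref{l:zeta}; what remains is bookkeeping, the only delicate points being the $L^2$ integrability underlying the identity $\var(M_n) = \sum_k \E[V_v(Y_k)]$ and the observation that~\eqref{e:doeblin} descends to a torus Doeblin condition strong enough to make the additive-functional average genuinely converge (not merely remain bounded), so that~\eqref{e:var-v.Yn} holds with an honest limit.
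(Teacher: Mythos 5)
Your proposal is correct and follows essentially the same route as the paper's proof: the martingale $M_n = \zeta(Y_n) - n\bar s$ built from the corrector of Lemma~\ref{l:zeta}, orthogonality of increments to get $\var(M_n) = \sum_{k=0}^{n-1}\E V_v(Y_k)$, boundedness of $\chi$ to transfer this to $\var(v\cdot Y_n)$ up to $o(n)$, and the Doeblin condition inherited by the torus chain $\tilde Y$ to evaluate the Ces\`aro limit as $\int_{Q_0} V_v$. Your explicit $O(n^{1/2})$ accounting for the cross terms and the explicit derivation of $p^{\tilde Y}(\tilde y,\tilde z)\ge\beta$ from~\eqref{e:doeblin} are slightly more detailed than the paper's presentation, but the argument is the same.
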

\begin{proof}
  We first show that for all $n \in \N$ and $v \in \R^d$ we have
  \begin{equation}\label{e:KV-variance}
    \var( v \cdot \zeta(Y_n) )
      = \sum_{k=0}^{n-1} \E V_v( Y_k )
    \,.
  \end{equation}
  To see this, note that
\[
\E^y (\zeta(Y_1) - \zeta(y)) = \mathcal{L}\zeta(y) = \bar s, \quad y \in \R^d.
\]
Hence, if $\mathcal{F}_n$ is the filtration generated by $Y_0,Y_1,\dots,Y_n$, we have
  \begin{equation}
    \E [\zeta(Y_{n+1}) - \zeta(Y_n)\;|\; \mathcal{F}_n ]     
      =  \mathcal{L}\zeta(Y_{n}) = \bar s\,.
  \end{equation}
  As a result, each component of the vector-valued process
  \begin{equation}\label{e:Mn}
    M_n \defeq \zeta(Y_n) - n \bar s
  \end{equation}
  is a martingale.
  Thus 
  \begin{align}
\var( v \cdot (\zeta(Y_n) - \zeta(Y_0)) )
      &= \E( v \cdot (M_n - M_0)^2 )
      = \sum_{k = 0}^{n-1} \E  (v \cdot (M_{k+1} - M_k) )^2
    \\
      &= \sum_{k = 0}^{n-1} \E  (v \cdot (\zeta(Y_{k+1}) - \zeta(Y_k) - \bar s) )^2
    \\
    \label{e:tmp-2024-03-08-2}
      &= \sum_{k = 0}^{n-1} \E  \E^{Y_k} (v \cdot (\zeta(Y_{1}) - \zeta(Y_0) - \bar s) )^2
      = \sum_{k = 0}^{n-1} \E  V_v(Y_k)
      \,,
  \end{align} 
  proving~\eqref{e:KV-variance}.

  To see that~\eqref{e:KV-variance} implies~\eqref{e:var-v.Yn}, recall that $y = \zeta(y) - \chi(y)$, where $\chi$ is periodic and bounded.  Hence,  we have
  \begin{align}
       \lim_{n \to \infty} \frac{ \var(v \cdot Y_n )}{n}  & =  \lim_{n \to \infty} \frac{ \var(v \cdot (Y_n-Y_0) )}{n} \\
      & = \lim_{n \to \infty} \frac{ \var(v \cdot (\zeta(Y_n) - \zeta(Y_0) -  \chi(Y_n) + \chi(Y_0)) )}{n}  \\
      & = \lim_{n \to \infty} \frac{ \var(v \cdot (\zeta(Y_n)- \zeta(Y_0))  )}{n} \,.
  \end{align}

Let~$\tilde Y$ denote the projection of~$Y$ to the torus. Since $V_v$ is periodic, we have $V_v(Y_k) =  V_v(\tilde Y_k)$.  The minorizing condition~\eqref{e:doeblin} for $Y$ implies that the projected chain $\tilde Y$ on $\T^d$ is ergodic.  In fact, this condition implies that the mixing time of this chain on $\T^d$ satisfies $\tmix(\tilde Y) \leq 1 / \beta$ (see for instance~\cite{LevinPeres17,MontenegroTetali06}).
Therefore, applying the ergodic theorem to~\eqref{e:KV-variance}, we conclude
  \begin{equation}
    \lim_{n \to \infty} \frac{ \var(v \cdot Y_n )}{n} 
      = \lim_{n \to \infty} \frac{1}{n} \sum_{k=0}^{n-1} \E V_v( \tilde Y_k )
      = \int_{Q_0} V_v(y) \, dy
    \,.
    \qedhere
  \end{equation}
\end{proof}

We will now use Theorem~\ref{t:kipnis-varadhan} to prove Proposition~\ref{p:KV-variance}.
\begin{proof}[Proof of Proposition~\ref{p:KV-variance}]
  By Theorem~\ref{t:kipnis-varadhan} it suffices to show
  \begin{equation}\label{e:tmp-2024-02-09-1}
    \int_{Q_0} V_v(y) \, dy
      \geq \int_{Q_0} \beta v \cdot \mathcal D_{w( y)} v \, d y
      \,,
  \end{equation}
  for all $v \in \R^d$.
  To see this, observe
  \begin{align}
    \MoveEqLeft
    \int_{Q_0} V_v(y) \, dy
      = \int_{y \in Q_0} \E^y( v \cdot (\zeta(Y_1) - \zeta(Y_0) - \bar s) )^2 \, dy
    \\
      &= \int_{y \in Q_0} \E^y( v \cdot (Y_1 + \chi(Y_1) - y - \chi(y) - \bar s) )^2 \, dy
    \\
      &=
	\int_{y \in Q_0} 
	\int_{z \in \R^d}
	  p(y, z)
	  ( v \cdot (z + \chi(z) - y - \chi(y) - \bar s) )^2
	  \, dz \, dy
    \\
      \label{e:tmp-2024-02-07-1}
      &\geq
	\beta
	\int_{y, z \in Q_0} 
	\sum_{k \in \Z^d} w(y, k)
	  ( v \cdot (k + z + \chi(z) - y - \chi(y) - \bar s) )^2
	  \, dy \, d z.
  \end{align}

  Now let $K$ be a $\Z^d$ valued random variable with $\P(K = k) = w(y, k)$.
  Then
  \begin{align}
    v \cdot \mathcal D_{w(y)} v
      &= \var( v \cdot K )
      = \sum_{k \in \R^d}
	  w(y, k) (v \cdot (k - \E K))^2
    \\
      &= \inf_{b \in \R^d} \sum_{k \in \R^d} w( y, k) (v \cdot (k - b))^2
      \,.
  \end{align}
  Hence,~\eqref{e:tmp-2024-02-07-1} implies
  \begin{align}
    \int_{\T^d} V_v(y) \, dy
      &\geq \beta \int_{y, z \in Q_0}
	\sum_{k \in \Z^d} w( y, k)
	  ( v \cdot (k + z + \chi(z) - y - \chi(y) - \bar s) )^2
	  \, dy \, d z
      \\
      &\geq \int_{y \in Q_0} \beta v \cdot \mathcal D_{w(y)} v \, dy
      = \beta v \cdot \bar{\mathcal D}_{w} v
      \,,
  \end{align}
  yielding~\eqref{e:tmp-2024-02-09-1}, and concluding the proof.
\end{proof}

The proof of Corollary~\ref{c:KV-stopped} is very similar to that of Proposition~\ref{p:KV-variance}, and we outline it here.
\begin{proof}[Proof of Corollary~\ref{c:KV-stopped}]
  Let~$M_n$ be the martingale defined in~\eqref{e:Mn}, and use the same argument used to derive~\eqref{e:tmp-2024-03-08-2} to obtain 
  \begin{equation} 
   \var( v \cdot (\zeta(Y_{\tau_n})- \zeta(Y_0)) ) = \E \sum_{k = 0}^{\tau-1} V_v(Y_k)
    \,.
  \end{equation}
  Since $V_v \geq 0$, this implies
  \begin{equation}
   \var( v \cdot (\zeta(Y_{\tau_n})- \zeta(Y_0)) ) \geq \E \sum_{k = 0}^{\gamma n-1} V_v(Y_k)
    \,.
  \end{equation}
  Now following the same argument as in the proof of Proposition~\ref{p:KV-variance} will yield~\eqref{e:KV-stopped} as desired.
\end{proof}

It remains to prove existence of the corrector function~$\chi$, as stated in Lemma~\ref{l:zeta}.
\begin{proof}[Proof of Lemma~\ref{l:zeta}]
  Define the function~$s \colon \R^d \to \R^d$ and the vector~$\bar s \in \R^d$ by
  \begin{equation}
    s(y)
      \defeq \E^{y} (Y_1 - y)
      \,,
    \qquad\text{and}\qquad
    \bar s \defeq \int_{Q_0} s(y) \, d y \,.
\end{equation}
The property~\eqref{e:shift-invariant} implies that $s$ is a periodic function of~$y$.
Although $s(y)$ need not be continuous, it is bounded by
\[
\sup_{y \in \R^d} |s(y)| \leq \sqrt{d} + \sup_{y \in Q_0} \int_{\R^d} p^Y(y,z)|z| \,dz < \infty.
\] 

Our goal is to construct a bounded, periodic function $\chi$ satisfying 
\begin{equation}\label{e:LchiequalsD}
  \mathcal{L} \chi(y)= \bar s - s(y)
  \,.
\end{equation}
We claim that the series
\begin{equation}
\chi(y) = \sum_{n=0}^\infty \E^{\tilde y} (  s(\tilde Y_n) - \bar s) \,,
  \quad y \in \R^d \label{chidef}
\end{equation}
is well-defined and gives the desired function.
Here~$\tilde y \in \T^d$ is the equivalence class~$y \pmod{\Z^d} \in \T^d$, and $\tilde Y_n = Y_n \pmod{\Z^d}$ (as in~\eqref{e:tildeY}) is the projection of~$Y$ onto the torus~$\T^d$.

Let $p^{\tilde Y}$ be the transition density on $\T^d$ for the projected chain $\tilde Y$. Then
\begin{align}
\E^{y} ( s(\tilde Y_n) - \bar s)
  &=\int_{\T^d}p_n^{\tilde Y}( \tilde y, \tilde z) ( s(\tilde z) - \bar s) \,d\tilde z
  = \int_{\T^d}(p_n^{\tilde Y}( \tilde y, \tilde z) - 1) ( s(\tilde z) - \bar s) \,d\tilde z
\\
  &\leq
    \norm{s - \bar s}_{L^\infty(\T^d)}\sup_{\tilde y \in \T^d} \int_{\T^d} |p_n^{\tilde Y}(\tilde y, \tilde z) - 1| \,d \tilde y
    \,.
\end{align}

Now let~$T = \tmix(\tilde Y)$ be the mixing time of~$\tilde Y$.
As in the proof of Theorem~\ref{t:kipnis-varadhan}, the minorizing condition~\eqref{e:doeblin} implies that $T \leq 1 / \beta < \infty$.
Since the right hand side decreases geometrically (exactly as in~\eqref{e:pnYtildeGeom}), the series on the right hand side of~\eqref{chidef} converges and
\begin{equation}
  \norm{\chi}_{L^\infty(\T^d)}
    \leq 
      2 \tmix(\tilde Y)
      \norm{s - \bar s}_{L^\infty(\T^d)}
    \,.
\end{equation}
By shift invariance~\eqref{e:shift-invariant}, the function~$\chi$ is~$\Z^d$ periodic. 

Finally, we check that $\chi$ satisfies \eqref{e:LchiequalsD}.
Notice
  \begin{align}
    \mathcal L\chi(y)
      & =  \E^y \left( \sum_{k=0}^\infty  \E^{Y_1}  (s(Y_k) - \bar s)\right) - \chi(y) \\
      &  =  \left(\sum_{k=0}^\infty \E^y (s(Y_{k+1}) - \bar s)\right) - \chi(y) \\
      & = - \E^y (s(Y_{1}) - \bar s)  = \bar s - s(y)
      \,,
  \end{align}
  which implies~$\chi$ solves~\eqref{e:LchiequalsD} as desired.
  Since~$\zeta$ is defined by~\eqref{e:zeta}, we this implies~$\zeta$ satisfies~\eqref{e:Lf-equals-D}, concluding the proof.
\end{proof}

\section{Minorizing the transition density (Lemmas~\ref{l:p2z-doeblin},~\ref{l:DwCheck})}\label{s:minorizing}

Finally, we conclude this paper by proving the minorization condition for the process~$Z^\epsilon$ (Lemma~\ref{l:p2z-doeblin}), and estimating the effective diffusivity of the minorizing distribution (Lemma~\ref{l:DwCheck}).
The proof of this relies on the Bernoulli structure of~$\varphi$, and requires some notational preparation.

\subsection{Cylinder sets and shifts.}
We begin by setting up our notation for cylinder sets and shifts that will be used in the proof.
Let $\mathcal I = \set{1, \dots, M}$, and $\mathcal T$ denote the set of all finite length $\mathcal I$-valued tuples.
Explicitly,
\begin{equation*}
    \mathcal T
      = \set{\zerotuple} \cup \bigcup_{m = 1}^\infty \mathcal I^m,
\end{equation*}
where $\zerotuple$ denotes the empty tuple.
Given a tuple $s = (s_0, \dots, s_{m-1}) \in \mathcal T$ we use $\abs{s} = m$ to denote the length of the tuple~$s$, with \(\abs{\zerotuple} = 0\) by convention. 

Let \(\tilde \sigma\colon \mathcal T \to \mathcal T\) be the \emph{Bernoulli left shift}.
That is, $\tilde \sigma(s)$ removes the first coordinate of~$s$ and shifts the other coordinates left.
More precisely, we define
\[
    \tilde \sigma(s_0, \ldots, s_{m-1}) = (s_1, \ldots, s_{m-1}) \,,
    \quad\text{and}\quad
    \tilde \sigma(\zerotuple) = \zerotuple\,.
\]
Let~$\tilde \sigma^k$ denote the \(k\)-fold composition of the map \(\tilde \sigma\).
The map~$\tilde \sigma$ is the Bernoulli shift corresponding to the periodic map~$\tilde \varphi \colon \T^d \to \T^d$ induced by~$\varphi$.

Given $k \in \Z^d$ and $s = (s_0, \dots, s_{m-1}) \in \mathcal T$, define
\begin{equation}
  \sigma(k, s) = (k + \sigma_0(s_0), \tilde \sigma(s) )
  \,,
\end{equation}
where~$\sigma_0 \colon \mathcal I \to \Z^d$ is the function defined in Assumption~\ref{a:phi}.
We now define the cylinder set associated to $(k, s) \in \Z^d \times \mathcal T$ by
\begin{equation}\label{e:CylS}
  \cyl_{k, s} \defeq \set{
    x \in Q_k \st
      \varphi^n(x) - \floor{\varphi^n(x)} \in  E_{s_n} \text{ for all } n \leq \abs{s}
    }
    \,.
\end{equation}
When~$s = \zerotuple$, the associated cylinder set $\cyl_{k, s}$ is the cube $Q_k \subseteq \R^d$.
When $s = (s_0)$ is a tuple of length~$1$, the associated cylinder set~$\cyl_{k, s}$ is simply the domain~$k + E_{s_0} \subset Q_k$.
When~$\abs{s} > 1$, each of these cylinder sets get subdivided further, forming finer and finer partitions of one of the cubes $Q_n$.  Observe that for any $s \in \mathcal{T}$, 
\[
\cyl_{k, s} = (k - j)+ \cyl_{j, s}, \quad \forall \; k,j \in \Z^d.
\]
In particular, the volume $|\cyl_{k, s}|$ does not depend on $k$. Recall by Assumption~\ref{a:phi}, all cylinder sets are intervals for~$d = 1$ and axis-aligned cubes for $d > 1$. We will use~$\ell_s$ denote the length of the interval~$\cyl_{k,s}$ when~$d = 1$, and the side length of the cube~$\cyl_{k,s}$ when $d > 1$; $\ell_s$ does not depend on the integer coordinate $k$.  For convenience define~$\lambda_s = 1/\ell_s$.
Explicitly,
\begin{equation}\label{elldef}
  \ell_s = |\cyl_{k,s}|^{1/d}\,,
  \quad\text{and}\quad
  \lambda_s = \frac{1}{\ell_s} = \frac{1}{|\cyl_{k,s}|^{1/d}}\,.
\end{equation}


\subsection{Minorizing by a bump function.}

In this section we will show that the distribution of~$X^\epsilon(x)$ after time~$\theta^\epsilon$ is minorized by a bump function on a cube (as in~\eqref{e:bump-lower}).
If $s \in \mathcal T$ then notice
\begin{equation} \label{e:phicylinder}
  \varphi(\mathring \cyl_{k, s})
    = \begin{dcases}
	\mathring \cyl_{\sigma(k, s)} & s \neq \zerotuple\,,
	\\
	\bigcup_{j \in \mathcal I} Q_{k + \sigma_0(j)} - \mathcal N
	& s= \zerotuple
	\,,
      \end{dcases}
\end{equation}
for some null set~$\mathcal N$.
In particular, this means that if $|s| \geq 1$,
\begin{equation} \label{Jksdef}
\varphi^{\abs{s}}(\mathring \cyl_{k, s}) = \mathring Q_{J(k,s)}, \quad \quad \text{where}\quad J(k,s) = k + \sum_{i=0}^{|s|-1} \sigma_0(s_i) \in \Z^d.
\end{equation}
Thus, an initial distribution that is supported on~$\cyl_{k, s}$ becomes spread over~$Q(J(k, s))$ after~$\abs{s}$ iterations of~$\varphi$.

Since the process~$X^\epsilon$ is constructed by intertwining the action of~$\varphi$ with noise, this suggests that if~$X^\epsilon_0$ is concentrated on one cylinder set~$\cyl_{k,s}$, then the distribution of $X^\epsilon_{\abs{s}}$ should be spread out over some cube~$Q_n$.
This, however, is not easy to prove as the action of the noise does not commute with the dynamics of~$\varphi$.
The main idea introduced in~\cite{IyerLuEA24} is to construct a family of distributions $\bm F_{k, s}$ whose evolution under $X^\epsilon$ is controlled.

For any $k \in \Z^d$, $s \in \mathcal T$, define
\begin{equation}\label{e:FkDef}
  \bm F_{k, s} (x) =
  \begin{dcases}
    \one_{Q_k}(x) \bm F_*(x)
	  & s = \zerotuple
	  \,,
	  \\
	  \frac{1}{\abs{\cyl_{k, s}}}
	  \one_{\cyl_{k,s}} \bm F_*\circ \varphi^{\abs{s}}(x)
	  & s \neq \zerotuple
	  \,,
  \end{dcases}
\end{equation}
where
\begin{equation}\label{e:F-star-def}
  \bm F_*(x) \defeq \frac{2}{\pi} \prod_{i = 1}^d |\sin( \pi x_i )|
  \,.
\end{equation}
Note~$\bm F_*$ is $\Z^d$ periodic, vanishes on~$\cup_{k \in \Z^d} \partial Q_k$.
Moreover, for any cube $Q_k$, $\bm F_*$ restricted to $\bar Q_k$ is the principal Dirichlet eigenfunction of the Laplace operator, normalized to have integral $1$ over $Q_k$.

\begin{lemma}\label{l:FkLower}
  For any $k \in \Z^d$, $s \in \mathcal T$, $\bm F_{k,s}$ is supported on~$\cyl_{k, s}$, strictly positive on~$\mathring \cyl_{k, s}$,  has integral~$1$.
  Moreover, there exists~$a > 0$ such that for all~$k \in \Z^d$, $s \neq \zerotuple$, we have the pointwise inequality
  \begin{equation}\label{e:TStarLower}
    T_{*,\epsilon} \bm F_{k,s}
      \geq
	e^{ -a (\lambda_s \epsilon)^2 }
	\bm F_{\sigma(k,s)}
    \quad\text{where}\quad
    \lambda_{s} = \frac{1}{\abs{\cyl_{k,s}}^{1/d}}
    \,.
  \end{equation}
  Here $T_{*,\epsilon}$ is the evolution operator defined by
  \begin{equation}\label{e:t-star-lower}
    T_{*, \epsilon} f(y) = \int_{\R^d} f(x) p^{X^\epsilon}_1(x, y) \, dx
    \,,
  \end{equation}
  where $p^{X^\epsilon}_1$ is the one step transition density of~$X^\epsilon$.
\end{lemma}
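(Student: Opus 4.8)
The plan is to reduce the pointwise bound to a heat–kernel comparison on a single cube. The support, positivity and mass statements are routine. For $s=\zerotuple$, $\bm F_{k,\zerotuple}=\one_{Q_k}\bm F_*$ is supported on $\bar Q_k$, positive on $\mathring Q_k$, and has integral $1$ by the normalization of $\bm F_*$. For $s\neq\zerotuple$, $\varphi^{\abs s}$ restricted to $\cyl_{k,s}$ is an affine similarity carrying $\mathring\cyl_{k,s}$ onto $\mathring Q_{J(k,s)}$ by~\eqref{Jksdef}; since $\bm F_*$ is positive on $\mathring Q_{J(k,s)}$ and vanishes on $\partial Q_{J(k,s)}$, the factor $\bm F_*\circ\varphi^{\abs s}$ is positive on $\mathring\cyl_{k,s}$ and vanishes on $\partial\cyl_{k,s}$; and the change of variables $z=\varphi^{\abs s}(x)$, whose Jacobian has absolute value $\abs{\cyl_{k,s}}^{-1}$, together with $\int_{Q_{J(k,s)}}\bm F_*=1$, shows $\bm F_{k,s}$ has integral $1$.

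For the inequality, my first step would be to establish
\begin{equation*}
  T_{*,\epsilon}\bm F_{k,s}=g_\epsilon*\bm F_{\sigma(k,s)},\qquad s\neq\zerotuple,
\end{equation*}
where $g_\epsilon(z)=(2\pi\epsilon^2)^{-d/2}e^{-\abs{z}^2/(2\epsilon^2)}$ is the density of $\epsilon\xi_1$, so that $p^{X^\epsilon}_1(x,y)=g_\epsilon(y-\varphi(x))$. Starting from $T_{*,\epsilon}\bm F_{k,s}(y)=\int\bm F_{k,s}(x)\,g_\epsilon(y-\varphi(x))\,dx$, I would change variables $z=\varphi(x)$: by~\eqref{e:phicylinder}, $\varphi$ restricted to $\mathring\cyl_{k,s}\subseteq k+\mathring E_{s_0}$ is affine with $\abs{\det D\varphi}=\abs{E_{s_0}}^{-1}$ and maps $\mathring\cyl_{k,s}$ onto $\mathring\cyl_{\sigma(k,s)}$, while $\varphi^{\abs s}(x)=\varphi^{\abs s-1}(\varphi(x))$ with $\abs s-1=\abs{\tilde\sigma s}$. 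The Jacobian factor $\abs{E_{s_0}}$ produced by the substitution combines with the prefactor $\abs{\cyl_{k,s}}^{-1}$ to give $\abs{\cyl_{\sigma(k,s)}}^{-1}$ (using $\abs{\cyl_{\sigma(k,s)}}=\abs{E_{s_0}}^{-1}\abs{\cyl_{k,s}}$), and the resulting integrand is exactly $\bm F_{\sigma(k,s)}(z)\,g_\epsilon(y-z)$. The degenerate case $\abs s=1$, where $\tilde\sigma s=\zerotuple$, $\varphi^0=\mathrm{id}$, and $\cyl_{\sigma(k,s)}=Q_{k+\sigma_0(s_0)}$, is covered by the same computation once one checks that $\bm F_{k+\sigma_0(s_0),\zerotuple}=\one_{Q_{k+\sigma_0(s_0)}}\bm F_*$ is recovered.

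The second step uses that $\bm F_{\sigma(k,s)}$, extended by zero, is a positive multiple of the principal Dirichlet eigenfunction of $-\Delta$ on the cube $\cyl_{\sigma(k,s)}$: indeed $\varphi^{\abs{\tilde\sigma s}}$ is an affine similarity of ratio $\ell_{\tilde\sigma s}^{-1}$ on that cube, so $\bm F_*\circ\varphi^{\abs{\tilde\sigma s}}$ is the pullback of the unit-cube eigenfunction, i.e.\ the eigenfunction with eigenvalue $\mu=d\pi^2/\ell_{\tilde\sigma s}^2$. Writing $\tau=\epsilon^2/2$, so that $g_\epsilon$ is the free heat kernel at time $\tau$ and $g_\epsilon*\bm F_{\sigma(k,s)}=e^{\tau\Delta}\bm F_{\sigma(k,s)}$, I would invoke the domination of the Dirichlet heat semigroup on $\cyl_{\sigma(k,s)}$ by the free heat semigroup on $\R^d$ --- equivalently, the parabolic maximum principle applied to $e^{t\Delta}\bm F_{\sigma(k,s)}-e^{-t\mu}\bm F_{\sigma(k,s)}$ on $[0,\infty)\times\cyl_{\sigma(k,s)}$, whose data is nonnegative on the parabolic boundary since $\bm F_{\sigma(k,s)}\ge 0$ and vanishes on $\partial\cyl_{\sigma(k,s)}$ --- to obtain $g_\epsilon*\bm F_{\sigma(k,s)}\ge e^{-\tau\mu}\bm F_{\sigma(k,s)}$ on all of $\R^d$. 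Finally $\cyl_{\sigma(k,s)}=\varphi(\cyl_{k,s})$ has volume $\abs{E_{s_0}}^{-1}\abs{\cyl_{k,s}}\ge\abs{\cyl_{k,s}}$ since $\abs{E_{s_0}}\le 1$, hence $\ell_{\tilde\sigma s}\ge\ell_s$ and $\tau\mu\le\tfrac{d\pi^2}{2}(\lambda_s\epsilon)^2$; combined with $\bm F_{\sigma(k,s)}\ge 0$ this gives~\eqref{e:TStarLower} with $a=d\pi^2/2$.

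None of these steps is deep. The only substantive analytic input is the heat-kernel / maximum-principle comparison, which is classical, so the main obstacle is really careful bookkeeping: keeping the expansion Jacobian $\abs{E_{s_0}}^{-1}$ and the cylinder-volume normalizations aligned so that they cancel to produce precisely $\bm F_{\sigma(k,s)}$, and separately handling the length-one case $\abs s=1$ (empty target tuple) and verifying the affine-similarity and eigenfunction claims for a general finite tuple.
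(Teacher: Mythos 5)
Your proposal is correct. Note that the paper does not actually prove this lemma in situ --- it simply declares the proof ``identical to that of Lemma~3.1 in [IyerLuEA24]'' modulo bookkeeping of the $\Z^d$-index --- so your argument is a self-contained reconstruction rather than a deviation. The mechanism you use is evidently the intended one: the remark following the lemma (that the explicit form of $\bm F_{k,s}$ ``only works when the noise is Gaussian'') confirms that the cited proof rests on exactly the Gaussian-convolution-of-a-Dirichlet-eigenfunction structure you exploit. Your two steps are both sound: the exact identity $T_{*,\epsilon}\bm F_{k,s}=g_\epsilon*\bm F_{\sigma(k,s)}$ follows from the change of variables $z=\varphi(x)$ with Jacobian $\abs{E_{s_0}}$ cancelling against the cylinder-volume normalizations (and your separate check of the $\abs{s}=1$ case is needed and handled), and the parabolic comparison $e^{\tau\Delta}F\geq e^{-\tau\mu}F$ for the principal Dirichlet eigenfunction of the cube $\cyl_{\sigma(k,s)}$ is classical. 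Two small observations: your exponent naturally comes out as $e^{-a(\lambda_{\tilde\sigma s}\epsilon)^2}$ with $\lambda_{\tilde\sigma s}\leq\lambda_s$, so you in fact prove a marginally stronger bound than \eqref{e:TStarLower}, consistent with how \eqref{e:TStarEpN} is iterated later; and the normalization constant $2/\pi$ in \eqref{e:F-star-def} does not actually make $\int_{Q_0}\bm F_*=1$ for general $d$ (it should be $(\pi/2)^d$), but that is a typo in the paper, and your proof uses only the stated normalization property, as intended.
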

\begin{remark}
  The explicit form of~$\bm F_{k,s}$ in~\eqref{e:FkDef} only works when the noise~$\xi$ is Gaussian.
  When the noise~$\xi_n$ is not a Gaussian, the construction of a suitable family of functions~$\bm F_{k,s}$ surprisingly difficult.
  Following Section 3.5 in~\cite{IyerLuEA24} one can find~$a, \gamma > 0$ and a family of functions~$\bm F_{k, s}$ such that
  \begin{equation}
    T_{*,\epsilon} \bm F_{k,s}
      \geq
	(1 - a (\lambda_{s} \epsilon)^\gamma )
	\bm F_{\sigma(k,s)}
    \,,
  \end{equation}
  which is good enough for our purposes.
\end{remark}

\begin{proof}[Proof of Lemma~\ref{l:FkLower}]
  The proof is identical to that of Lemma 3.1 in~\cite{IyerLuEA24}.
  The only difference is that here the ambient space is $\R^d$ rather than $\T^d$.
  The only change required to the proof of Lemma 3.1 in~\cite{IyerLuEA24} is to additionally keep track of the $\Z^d$-index of cylinder sets (i.e. $(k,s)$ rather than $s$), so that $\varphi$ maps $\cyl_{k, s}$ to $\cyl_{\sigma(k,s)}$.
\end{proof}

\begin{lemma}\label{l:TstarLower}
  Let~$a$ be as in Lemma~\ref{l:FkLower}.
There exist constants $\beta \in (0,1]$ such that for all $\epsilon > 0$ and all $s \in \mathcal T$ such that $\ell_s \geq \epsilon$, we have
  \begin{equation}\label{e:TnStarLower}
    T_{*,\epsilon}^{n} \bm F_{k,s}  \geq \beta \bm F_{\sigma^n(k,s)}, \quad \forall \; n \leq |s|
  \end{equation}
In particular, 
  \begin{equation}\label{e:TStarLower1}
    T_{*,\epsilon}^{|s|} \bm F_{k,s}  \geq \beta \one_{Q_{J(k,s)}}(x) \bm F_*(x), 
  \end{equation}
where $J(k,s)$ is defined in \eqref{Jksdef}.
\end{lemma}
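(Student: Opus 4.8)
\textbf{Proof proposal for Lemma~\ref{l:TstarLower}.}

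The plan is to iterate the one-step estimate~\eqref{e:TStarLower} from Lemma~\ref{l:FkLower} along the orbit of the cylinder index $(k,s)$ under the shift $\sigma$, and to control the accumulated multiplicative loss using the fact that the side lengths $\ell_{\tilde\sigma^j(s)}$ \emph{grow geometrically} as $j$ increases (since $\varphi$ is expanding). First I would observe that for $j = 0, 1, \dots, |s|-1$ the tuple $\tilde\sigma^j(s)$ is nonempty, so Lemma~\ref{l:FkLower} applies at each step and gives
\begin{equation}
  T_{*,\epsilon}^{\,n}\bm F_{k,s}
    \geq \Bigl(\prod_{j=0}^{n-1} e^{-a(\lambda_{\tilde\sigma^j(s)}\epsilon)^2}\Bigr)\bm F_{\sigma^n(k,s)}
    = \exp\Bigl(-a\epsilon^2\sum_{j=0}^{n-1}\lambda_{\tilde\sigma^j(s)}^2\Bigr)\bm F_{\sigma^n(k,s)}
  \,,
\end{equation}
for all $n \leq |s|$. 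This is just induction on $n$ using the semigroup property of $T_{*,\epsilon}$ together with monotonicity of $T_{*,\epsilon}$ (it has a nonnegative kernel), and the fact that $\sigma^{n}(k,s) = \sigma(\sigma^{n-1}(k,s))$ with the second coordinate of $\sigma^{n-1}(k,s)$ equal to $\tilde\sigma^{n-1}(s)$.

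The key quantitative point is to bound $\sum_{j=0}^{|s|-1}\lambda_{\tilde\sigma^j(s)}^2$ independently of $s$, using only $\ell_s \geq \epsilon$. Here I would use the expansion structure: since each $\varphi_i$ expands by the factor $\abs{E_i}^{-1/d} \geq \abs{E_M}^{-1/d} > 1$, applying $\varphi$ once multiplies the relevant side length by at least $\rho \defeq \abs{E_M}^{-1/d} > 1$; more precisely $\ell_{\tilde\sigma(s)} \geq \rho\,\ell_s$, equivalently $\lambda_{\tilde\sigma(s)} \leq \rho^{-1}\lambda_s$. Iterating, $\lambda_{\tilde\sigma^j(s)} \leq \rho^{-j}\lambda_s$, and since $\lambda_s = 1/\ell_s \leq 1/\epsilon$ we get
\begin{equation}
  \sum_{j=0}^{|s|-1}\lambda_{\tilde\sigma^j(s)}^2
    \leq \lambda_s^2\sum_{j=0}^{\infty}\rho^{-2j}
    = \frac{\lambda_s^2}{1-\rho^{-2}}
    \leq \frac{1}{\epsilon^2}\cdot\frac{1}{1-\rho^{-2}}
  \,.
\end{equation}
Therefore $\exp(-a\epsilon^2\sum_{j}\lambda_{\tilde\sigma^j(s)}^2) \geq \exp\bigl(-a/(1-\rho^{-2})\bigr) =: \beta \in (0,1]$, which is a constant independent of $\epsilon$, $k$, $s$ and $n$. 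This proves~\eqref{e:TnStarLower}. Finally, taking $n = |s|$ and using~\eqref{Jksdef} together with the definition~\eqref{e:FkDef} of $\bm F_{k,s}$ — which, for $s\neq\zerotuple$, satisfies $\bm F_{\sigma^{|s|}(k,s)} = \bm F_{J(k,s),\zerotuple} = \one_{Q_{J(k,s)}}\bm F_*$ — yields~\eqref{e:TStarLower1}. (When $s = \zerotuple$ the statement $T_{*,\epsilon}^0\bm F_{k,\zerotuple} = \bm F_{k,\zerotuple} = \one_{Q_k}\bm F_*$ holds trivially with $\beta = 1$.)

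The main obstacle I anticipate is purely bookkeeping rather than conceptual: one must verify carefully that the side-length comparison $\ell_{\tilde\sigma(s)} \geq \rho\,\ell_s$ genuinely holds for the axis-aligned cube cylinder sets under Assumption~\ref{a:phi}(1), i.e.\ that each coordinate edge is stretched by the uniform factor $\abs{E_i}^{-1/d}$ — this follows because $\varphi_i$ in~\eqref{e:Phi-i-Def} is a scaled orthogonal map and the cylinder sets are cubes, so the image is again a cube with edge multiplied by $\abs{E_i}^{-1/d}$, but one should double-check that orthogonal (not merely diagonal) $\mathcal O_i$ still maps axis-aligned cubes to axis-aligned cubes within the relevant sub-cube decomposition, which is guaranteed by condition~\ref{a:int-cube} forcing $\varphi(\mathring E_i) = \mathring Q_{\sigma_0(i)}$. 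The only other subtlety is ensuring the monotonicity of $T_{*,\epsilon}$ and the base case $n=0$ of the induction are stated cleanly; both are immediate.
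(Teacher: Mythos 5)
Your proposal is correct and follows essentially the same route as the paper: iterate the one-step bound~\eqref{e:TStarLower}, use the uniform expansion factor $\abs{E_M}^{-1/d}>1$ to dominate $\sum_j \lambda_{\tilde\sigma^j(s)}^2$ by a geometric series, and use $\ell_s\geq\epsilon$ to make the leading term $O(\epsilon^{-2})$ so that the exponent is bounded independently of $\epsilon$, $n$, $k$, $s$. Your indexing of the product (starting the sum at $j=0$, i.e.\ at $\lambda_s$ itself) and your explicit constant $\beta=\exp(-a/(1-\abs{E_M}^{2/d}))$ are in fact slightly more careful than the paper's displayed computation, but the argument is the same.
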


\begin{proof}[Proof of Lemma~\ref{l:TstarLower}]
  Iterating~\eqref{e:TStarLower} shows that for any~$n \leq \abs{s}$ we have
  \begin{equation}\label{e:TStarEpN}
    T_{*, \epsilon}^{n} \bm F_s
      \geq
	\exp\paren[\Big]{
	  -a  \epsilon^2
	  \sum_{j=1}^{n} \lambda_{\sigma^j (k, s)}^2  }
	\bm F_{\sigma^n (k, s)}
	\,.
  \end{equation}
  The sum in the exponential is easily bounded.
  Indeed, if  $s = (s_0, \dots, s_{n'-1}) \in \mathcal T $, then 
  \begin{equation*}
    \lambda_{\sigma (k, s)}
      = \frac{1}{\pi(\cyl_{\sigma (k, s)})^{1/d}}
      = \frac{\abs{E_{s_0}}^{1/d}}{\pi(\cyl_{k, s})^{1/d}}
      \leq \abs{E_M}^{1/d} \lambda_{k, s}\,.
  \end{equation*}
  Hence for every $n \leq \abs{s}$ we have
  \begin{equation}
    a  \epsilon^2 \sum_{j=1}^{n} \lambda_{\sigma^j (k, s)}^2
      \leq \frac{(\epsilon \lambda_{\sigma (k, s)})^2}{1 - \abs{E_M}^{2/d} }
      \leq  \frac{1}{1 - \abs{E_M}^{2/d} }
      \,,
  \end{equation}
  which is finite and independent of $n$ and $\epsilon$.
  Using this in~\eqref{e:TStarEpN} shows that for all $n \leq \abs{s}$,~\eqref{e:TnStarLower} holds with
  \begin{equation}
    \beta = \exp \paren[\Big]{
      \frac{-1}{1 - \abs{E_M}^{2/d} }
    }
    \,.
    \qedhere
  \end{equation}
\end{proof}

\subsection{Defragmenting the density}

Since~$\varphi$ may fragment each cube~$Q_n$, we now use the integer shifts~$I$ to defragment the density with small probability.
This will be used to minorize the distribution of~$X^\epsilon(x)$ after time~$2 + \theta^\epsilon(x)$, and leads to the proof of Lemma~\ref{l:p2z-doeblin}.

\begin{lemma}\label{l:Zprime-lower}
  Suppose~$Z'$ is a random variable and~$c > 0$ are such that for every Borel set~$A \subseteq \R^d$ we have
  \begin{equation}\label{e:Zprime-lower}
    \P( Z' \in A ) \geq c \int_{A \cap Q_0} \bm F_*(x) \, dx
    \,.
  \end{equation}
  Let~$I$ be an independent random variable with distribution~\eqref{e:I1-def}, and~$\xi$ be an independent standard normal.
  There exists a constant~$c' > 0$ such that for all~$\epsilon > 0$ we have
  \begin{equation}
    \P( \varphi(Z') + \epsilon \xi - I \in A ) \geq c' \abs{A \cap Q_0}
  \end{equation}
for all Borel set~$A \subseteq \R^d$.
\end{lemma}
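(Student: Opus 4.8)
The plan is to condition on which cell $\mathring E_i$ of the partition contains $Z'$, and on the value of $I$.  On the event $\set{Z'\in\mathring E_i}$ we have $\varphi(Z')=\varphi_i(Z')$, and by the interior-cube condition in Assumption~\ref{a:phi} the affine map $\varphi_i$ carries $\mathring E_i$ bijectively onto $\mathring Q_{\sigma_0(i)}$ with constant Jacobian $\abs{E_i}^{-1}$.  Using the independence of $Z'$, $I$ and $\xi$, the hypothesis~\eqref{e:Zprime-lower}, a change of variables, and Fubini, I would first reduce the claim to a pointwise lower bound on a convolution: for every Borel $A$,
\begin{equation*}
  \P\paren[\big]{\varphi(Z')+\epsilon\xi-I\in A}\ \geq\ c\int_A(\rho*g_\epsilon)(a)\,da,
\end{equation*}
where $g_\epsilon$ denotes the density of $\epsilon\xi$ and $\rho$ is the density \emph{on all of $\R^d$} of the random variable $\varphi(Z^*)-I$, with $Z^*$ distributed according to $\bm F_*\one_{Q_0}$.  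It then suffices to show that $(\rho*g_\epsilon)(a)\geq c_1>0$ for a.e.\ $a\in Q_0$ and all sufficiently small $\epsilon$, with $c_1$ independent of $\epsilon$; this gives the lemma with $c'=cc_1$.

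Next I would record the structure of $\rho$.  Decomposing by branch and by shift, on each cube $Q_m$ one has $\rho(w)=\Phi_m(w-m)$, where $\Phi_m\colon Q_0\to[0,\infty)$ is a fixed ($\epsilon$-independent) function assembled from the pushforwards of the functions $\bm F_*\one_{\mathring E_i}$.  By part~(1) of Assumption~\ref{a:phi} each $\varphi_i$ carries the axis-parallel cube $E_i$ onto the unit cube $Q_{\sigma_0(i)}$; this forces the orthogonal matrix $\mathcal O_i$ in~\eqref{e:Phi-i-Def} to be a signed permutation matrix, so $\varphi_i^{-1}$ is affine with linear part a scaled signed permutation, and each $\Phi_m$ is an explicit finite linear combination of products of absolute values of sines of affine functions of $w$ — in particular continuous on $\bar Q_0$.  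Since the map $w\mapsto\varphi_i^{-1}(w+\sigma_0(i))$ sends $\mathring Q_0$ into $\mathring E_i\subseteq\mathring Q_0$, where $\bm F_*>0$, the diagonal term ($m=0$) alone already shows $\rho>0$ on $\mathring Q_0$; hence $\rho\geq\delta_\eta>0$ on $\set{\dist(\cdot,\partial Q_0)\geq\eta}$ for every $\eta>0$.

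With this in hand the lower bound on $\rho*g_\epsilon$ splits into an easy interior part and a delicate boundary part.  For $a$ with $\dist(a,\partial Q_0)\geq\eta$ one bounds $(\rho*g_\epsilon)(a)\geq\delta_{\eta/2}\int_{B(a,\eta/2)}g_\epsilon\geq\tfrac12\delta_{\eta/2}$ once $\epsilon$ is small.  For $a$ near $\partial Q_0$ I would invoke the boundary condition in Assumption~\ref{a:phi}: given $v_0\in\partial Q_0$, it provides (together with the $\Z^d$-periodicity of $\varphi(\cdot)-(\cdot)$) an index $i_0$, a lattice vector $m_0$, and a point $z_0\in\mathring Q_0$ with $\varphi_{i_0}(z_0)=v_0+\sigma_0(i_0)-m_0$; since $\bm F_*(z_0)>0$ and $\bm F_*\circ\varphi_{i_0}^{-1}$ is continuous, the corresponding summand of $\Phi_{m_0}$ is bounded below near the relevant point, i.e.\ $\rho\geq c_{v_0}>0$ on $B(v_0,r_0)\cap Q_{m_0}$ for a cube $Q_{m_0}$ having $v_0$ on its boundary.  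Because a fixed positive fraction (at least $4^{-d}$ once $\epsilon$ is small) of the $\epsilon$-Gaussian centred at any point of $Q_0$ within $r_0/2$ of $v_0$ falls inside $Q_{m_0}$, this yields $(\rho*g_\epsilon)(a)\geq c_{v_0}\,4^{-d}$ on that neighbourhood.  Covering $\bar Q_0$ by $\set{\dist(\cdot,\partial Q_0)>\eta}$ together with finitely many such balls then gives $\rho*g_\epsilon\geq c_1>0$ on $Q_0$, and hence the lemma.

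I expect the last step to be the main obstacle.  The pushforward density $\rho$ can vanish on parts of $\partial Q_0$ (since $\bm F_*$ does), so no uniform lower bound on $Q_0$ is available from $\rho$ by itself; the Gaussian noise is what restores positivity, and one must show that near \emph{every} boundary point of $Q_0$ the density is bounded below on at least one adjacent cube, uniformly in $\epsilon$.  Tracking which branch $\varphi_i$ and which integer shift $\sigma_0(j)$ accomplish this, and checking that the selected cube actually touches the boundary point in question so that the Gaussian-mass estimate is uniform across faces, edges and corners, is the technical core of the argument — which is exactly what the boundary condition in Assumption~\ref{a:phi} is designed to supply.
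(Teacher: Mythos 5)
There is a genuine gap, and it sits exactly where you predicted: the boundary case. Your estimate for $a\in Q_0$ near $v_0\in\partial Q_0$ asserts that at least $4^{-d}$ of the $\epsilon$-Gaussian centred at \emph{any} point of $Q_0\cap B(v_0,r_0/2)$ falls inside the designated cube $Q_{m_0}$. When $m_0\neq 0$ this is false: such a point $a$ can lie at distance comparable to $r_0$ from $\bar Q_{m_0}$, and then $\P(a+\epsilon\xi\in Q_{m_0})$ is of order $e^{-c r_0^2/\epsilon^2}$, not $4^{-d}$. Nor are these points rescued by your interior estimate, since the constant $\delta_\eta$ you extract from the diagonal term degenerates as $\eta\to 0$ precisely because (on your reading) $\rho$ vanishes at $\partial Q_0$. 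So the region $\set{a\in Q_0 \st C\epsilon\lesssim \dist(a,\partial Q_0)<\eta}$ falls through the cracks of your covering, and the argument does not close as written.

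The paper takes a different and shorter route that makes this boundary surgery unnecessary. It discards all mismatched events and keeps only $\set{Z'\in\mathring E_i,\ I=\sigma_0(i)}$, paying a factor $\min_i\abs{E_i}$; all retained mass then lands in $Q_0$, with density bounded below by a multiple of $g(z)=\sum_i\abs{E_i}\,\bm F_*\paren[\big]{\hat\varphi_i^{-1}(z)}$, where $\hat\varphi_i(z)=\varphi(z)-\floor{\varphi(z)}$ is the mod-$\Z^d$ branch taking $E_i$ onto $Q_0$. The role of item~(\ref{a:boundary}) of Assumption~\ref{a:phi} is then to make $g$ strictly positive on the \emph{closed} cube $\bar Q_0$: every $z\in\partial Q_0$ has, for some branch, a preimage modulo $\Z^d$ lying in $\mathring Q_0$ (on an internal face of the partition $\set{E_i}$, where $\bm F_*>0$), so at least one summand of $g$ is positive there; continuity and compactness give $\min_{\bar Q_0}g>0$, i.e.\ a uniform lower bound $c\,\one_{Q_0}$ on the density of $\varphi(Z')-I$ \emph{before} any noise is added. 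The Gaussian convolution then costs only the trivial factor $\inf_{a\in Q_0}\P(a+\epsilon\xi\in Q_0)$ and does no work at the boundary. Your instinct that the vanishing of $\bm F_*$ on $\partial Q_0$ is the crux is correct, but the repair is not more Gaussian geometry in neighbouring cubes; it is to verify, via Assumption~\ref{a:phi}(\ref{a:boundary}), that the defragmented (diagonal) density is already bounded below up to and including $\partial Q_0$.
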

\begin{proof}
For~$i \in \set{1, \dots M}$, define~$\hat \varphi_i \colon \bar E_i \to \bar Q_0$ by
\begin{equation}
  \hat \varphi_i(z)
    = \varphi(z) - \sigma_i
    = \varphi(z) - \floor{\varphi(z)}
  \,.
\end{equation}
Observe that $\hat \varphi_i$ is invertible on $Q_0$. For any Borel set~$A$ we note
\begin{align}
  \P( \varphi(Z') - I \in A )
    & \geq \sum_{i=1}^M 
      \P( Z' \in \mathring E_i,~
	I = \sigma_0(i),~
	\varphi(Z') - I \in A )
  \\
    & \geq p_{\min} \sum_{i=1}^M 
      \P( Z' \in \mathring E_i,~
      \hat \varphi(Z') \in A )
  \\
    & \geq c p_{\min}
	\sum_{i=1}^M \int_{\mathring E_i}
	  \bm F_*(z) \one_A \circ \hat \varphi(z) \, dz
  \\
    & = c p_{\min}
	\int_{Q_0}
	  g(z)
	  \one_A(z)
	  \, dz
	   \,,
\end{align}
where
\begin{equation}
  g(z) \defeq \sum_{i=1}^M
    \bm F_*\circ \hat \varphi_i^{-1}(z)
    \abs{E_i} \, dz
  \,.
\end{equation}

By Assumption~\ref{a:phi} item (\ref{a:int-cube}), and the fact that~$\bm F_* > 0$ in~$\mathring Q_0$ we know that~$g > 0$ on~$\bar Q_0$.
Since~$g$ is continuous on~$\bar Q_0$, this implies
\begin{equation}
  \min_{Q_0} g > 0\,.
\end{equation}
This in turn implies that
\begin{equation}
  \P( \varphi(Z') - I \in A )
    \geq c \abs{A \cap Q_0}\,.
\end{equation}

Let~$f_\epsilon$ be the density of the random variable~$\varphi(Z') - I + \epsilon \xi$, and~$G_{\epsilon^2}$ be the density of a Gaussian with variance~$\epsilon^2$.
The above implies
\begin{equation}
  f_0 \geq c \one_{Q_0}
  \,.
\end{equation}
Thus
\begin{equation}
  f_\epsilon = f_0 * G_{\epsilon} \geq \frac{c}{2^d} \one_{Q_0}
  \,,
\end{equation}
which implies~\eqref{e:Zprime-lower} as desired.

\end{proof}

\begin{lemma}\label{l:p1z-doeblin}
  The one step transition density of the process~$Z^\epsilon$ is bounded below by
  \begin{equation}\label{e:pZ1-lower}
    p_1^{Z^\epsilon}(x, y)
      \geq c \one_{ Q(X^0_{1 + \theta^\epsilon}(x)) } (y)\,.
  \end{equation}
\end{lemma}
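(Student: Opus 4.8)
The plan is to obtain Lemma~\ref{l:p1z-doeblin} in two stages: first establish the bump minorization~\eqref{e:bump-lower} for the transition density of $X^\epsilon$ after $1+\theta^\epsilon(x)$ steps, and then push this bump through one further application of $\varphi$ followed by the defragmenting shift $I_1$, using Lemma~\ref{l:Zprime-lower}. Since densities are only defined off a null set, throughout I would fix $x$ in the full-measure set where the symbolic itinerary of $\varphi(x)$ is unambiguous, and write $m=\theta^\epsilon(x)$, noting that $m\ge 1$ for $\epsilon$ small.

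For~\eqref{e:bump-lower}: the first step of $X^\epsilon$ produces the Gaussian density $p^{X^\epsilon}_1(x,\cdot)=G_{\epsilon^2}(\cdot-\varphi(x))$, where $G_{\epsilon^2}$ is the centred Gaussian density of variance $\epsilon^2$. I would minorize this by $c_0\bm F_{k_1,s'}$, where $k_1=\floor{\varphi(x)}$ and $s'\in\mathcal I^{m}$ is the length-$m$ itinerary of $\varphi(x)$, so that $\varphi(x)\in\cyl_{k_1,s'}$ and, by~\eqref{Jksdef}, $\varphi^{m}(\cyl_{k_1,s'})=Q_{J(k_1,s')}$ with $J(k_1,s')=\floor{\varphi^{m+1}(x)}=\floor{X^0_{1+\theta^\epsilon}(x)}$. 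The crucial point is a size bound on this cylinder: since $\abs{\det D\varphi}=1/\abs{E_i}$ on $E_i$ and $\abs{E_1}\le\abs{E_i}\le 1$, writing $s_j$ for the label with $\varphi^{j}(x)-\floor{\varphi^{j}(x)}\in E_{s_j}$ we have
\[
  \abs{\cyl_{k_1,s'}}=\prod_{j=1}^{m}\abs{E_{s_j}}=\Bigl(\prod_{j=1}^{m}\abs{\det D\varphi(X^0_j(x))}\Bigr)^{-1},
\]
and by the definition~\eqref{e:thetaDef} of $\theta^\epsilon$ the product on the right is $\ge\epsilon^{-d}$ while $\prod_{j=1}^{m-1}\abs{\det D\varphi(X^0_j(x))}<\epsilon^{-d}$; using $\abs{\det D\varphi}\le 1/\abs{E_1}$ this forces
\[
  \abs{E_1}^{1/d}\epsilon \;<\; \ell_{s'}=\abs{\cyl_{k_1,s'}}^{1/d}\;\le\;\epsilon .
\]
Thus $\cyl_{k_1,s'}$ is a cube of side length comparable to $\epsilon$ containing $\varphi(x)$. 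Comparing $\bm F_{k_1,s'}\le 2/(\pi\ell_{s'}^{\,d})\le 2/(\pi\abs{E_1}\epsilon^{d})$ with $G_{\epsilon^2}(z-\varphi(x))\ge(2\pi\epsilon^2)^{-d/2}e^{-d/2}$, valid for $z\in\cyl_{k_1,s'}$, yields $p^{X^\epsilon}_1(x,\cdot)\ge c_0\bm F_{k_1,s'}$ with $c_0>0$ independent of $x$ and $\epsilon$. Iterating the single-step estimate~\eqref{e:TStarLower} of Lemma~\ref{l:FkLower} exactly $m=\abs{s'}$ times, and bounding the accumulated exponent $a\epsilon^2\sum_{j=1}^{m}\lambda_{\sigma^j(k_1,s')}^2$ by an $\epsilon$-independent constant exactly as in the proof of Lemma~\ref{l:TstarLower} (the only input used there is $\ell_{s'}\gtrsim\epsilon$, which we just arranged), I would get $T_{*,\epsilon}^{m}\bm F_{k_1,s'}\ge\beta\,\one_{Q_{J(k_1,s')}}\bm F_*$, hence
\[
  p^{X^\epsilon}_{1+\theta^\epsilon(x)}(x,\cdot)=T_{*,\epsilon}^{m}\,p^{X^\epsilon}_1(x,\cdot)\;\ge\;c_0\beta\,\one_{Q(X^0_{1+\theta^\epsilon}(x))}\bm F_*,
\]
which is~\eqref{e:bump-lower} with $\chi=c_0\beta$.

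For the passage to $Z^\epsilon$: by construction of $Z^\epsilon$ (see~\eqref{e:NepDef}--\eqref{e:ZepDef}), started from $Z^\epsilon_0=x$ one has $N^\epsilon_1=2+\theta^\epsilon(x)$ and $Z^\epsilon_1=\varphi(W)+\epsilon\xi-I_1$, where $W=X^\epsilon_{1+\theta^\epsilon(x)}$ is the chain run from $x$ (and is independent of the standard normal $\xi$ and of $I_1$). Put $n^*=\floor{X^0_{1+\theta^\epsilon}(x)}$, so $Q(X^0_{1+\theta^\epsilon}(x))=Q_{n^*}$, and set $Z'=W-n^*$. By~\eqref{e:bump-lower} and the $\Z^d$-periodicity of $\bm F_*$ we get $\P(Z'\in A)\ge\chi\int_{A\cap Q_0}\bm F_*$ for every Borel $A$, i.e.\ $Z'$ satisfies the hypothesis of Lemma~\ref{l:Zprime-lower}. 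Since $\varphi$ has a $\Z^d$-periodic displacement, $\varphi(W)=\varphi(Z'+n^*)=\varphi(Z')+n^*$, so $Z^\epsilon_1=\bigl(\varphi(Z')+\epsilon\xi-I_1\bigr)+n^*$. Lemma~\ref{l:Zprime-lower} supplies an $\epsilon$-independent $c'>0$ with $\P(\varphi(Z')+\epsilon\xi-I_1\in B)\ge c'\abs{B\cap Q_0}$ for all Borel $B$; taking $B=A-n^*$ gives $\P(Z^\epsilon_1\in A\mid Z^\epsilon_0=x)\ge c'\abs{A\cap Q_{n^*}}$, i.e.\ $p^{Z^\epsilon}_1(x,y)\ge c'\,\one_{Q_{n^*}}(y)=c'\,\one_{Q(X^0_{1+\theta^\epsilon}(x))}(y)$, which is~\eqref{e:pZ1-lower}.

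The part I expect to be delicate is the size estimate in the second paragraph: a Gaussian of width $\epsilon$ minorizes a bump on a cube with an $\epsilon$-independent constant only when that cube has side length within a fixed multiplicative window of $\epsilon$ — neither much larger (else $\bm F_{k_1,s'}$ blows up pointwise) nor much smaller (else $G_{\epsilon^2}$ is too flat over it) — and it is precisely the stopping threshold in~\eqref{e:thetaDef} that confines $\ell_{s'}$ to $\bigl(\abs{E_1}^{1/d}\epsilon,\epsilon\bigr]$. Note also that one actually has $\ell_{s'}\le\epsilon$ rather than $\ell_{s'}\ge\epsilon$, so Lemma~\ref{l:TstarLower} is not quoted verbatim; its computation is reused with $\epsilon$ replaced by $\abs{E_1}^{1/d}\epsilon$, at the cost of a smaller constant $\beta$. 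The remaining bookkeeping — that the initial noise step, absorbed into the ``$+1$'' of $1+\theta^\epsilon$, places the bump near $\varphi(x)$ so that the ``$+2$'' of $N^\epsilon_1$ leaves exactly one step for the $\varphi$-then-defragment move, that the integer shifts line up through the periodic displacement of $\varphi$, and that the exceptional null set of $x$ is harmless since~\eqref{e:pZ1-lower} is an inequality between densities — is routine.
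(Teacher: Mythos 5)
Your proposal is correct and follows essentially the same route as the paper's proof: minorize the one-step Gaussian density by $\bm F_{k,s}$ on the $O(\epsilon)$-scale cylinder containing $\varphi(x)$ (using exactly the window $\abs{E_1}^{1/d}\epsilon < \ell_s \leq \epsilon$ and the identification $\theta^\epsilon(x)=\abs{s}$), iterate Lemma~\ref{l:FkLower} for $\theta^\epsilon(x)$ steps to spread the bump over $Q(X^0_{1+\theta^\epsilon}(x))$, and finish with the defragmenting step via Lemma~\ref{l:Zprime-lower}. Your observation that the cylinder produced here has $\ell_{s'}\leq\epsilon$ while Lemma~\ref{l:TstarLower} is stated for $\ell_s\geq\epsilon$, so that its computation must be rerun with $\epsilon$ replaced by $\abs{E_1}^{1/d}\epsilon$ at the cost of a smaller $\beta$, is a valid point of care that the paper's own proof glosses over.
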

\begin{proof}
  The first step is to show that the distribution of~$X^\epsilon_1$ is bounded below by one of the functions~$\bm F_{k, s}$ with~$\ell_s = O(\epsilon)$.
  For this, we will now partition~$\R^d$ into cylinder sets with side length~$O(\epsilon)$ as follows.
  Define
  \begin{equation}\label{e:Sepsilon}
    \mathcal S_\epsilon = \set[\Big]{
      (k, s) \in \Z^d \times \mathcal T \st
	\ell_{(k, s')} > \epsilon
	\,, \quad
	\ell_{k,s} \leq \epsilon
    }
    \,,
  \end{equation}
  where~$s' \in \mathcal S$ is obtained by removing the last element of~$s$.
  That is, if $s = (s_0, \dots, s_{m-1})$ then we define~$s' = (s_0, \dots, s_{m-2})$).
  Observe that~$\set{ \mathcal C_{k,s} \st (k, s) \in \mathcal S_\epsilon}$ is a partition of~$\R^d$ (see Figure~\ref{f:Nepsilon} for an illustration).

  Fix~$x \in \R^d$, and let~$N = \theta^\epsilon(x)$, and let~$f^\epsilon_n$ be the density of the random variable~$X^\epsilon_n(x)$.
  Let~$(k, s) \in \mathcal S_\epsilon$ be the unique element such that~$\varphi(x) \in \mathcal C_{k, s}$.
  Since
  \begin{equation}
    \epsilon \abs{E_1}^{1/d} < \ell_{k, s} \leq \epsilon 
    \,,
  \end{equation}
  and~$X^\epsilon_1(x)$ is a Gaussian with mean~$\varphi(x)$ and covariance matrix~$\epsilon^2 I$, there is a constant~$c$, independent of~$\epsilon, k, s$ and~$x$, such that
  \begin{equation}
    f^\epsilon_1 \geq c \bm F_{k ,s}
    \,.
  \end{equation}

  Now we note that for any~$j \in \Z^d$, $t \in \mathcal T$ and~$y \in \mathcal C_{j, t}$ we have
  \begin{equation}
    \abs{\mathcal C_{j, t}}
      = \prod_{n=0}^{\abs{t}-1}
	  \frac{ \abs{\mathcal C_{\sigma^n(j,t)}} }{ \abs{\mathcal C_{\sigma^{n+1}(j,t)} } }
      = \prod_{n=0}^{\abs{t}-1}
	  \frac{ 1 }{\abs{\det D\varphi( \varphi^n(y) ) }}
  \end{equation}
  Hence for the~$(k, s)$ chosen above we know
  \begin{equation}
    \prod_{n=1}^{\abs{s}} \frac{ 1 }{\abs{\det D\varphi( \varphi^n(y) ) }} \leq \epsilon^d
    \,,
    \quad\text{and}\quad
    \prod_{n=1}^{\abs{s}-1} \frac{ 1 }{\abs{\det D\varphi( \varphi^n(y) ) }} > \epsilon^d
    \,,
  \end{equation}
  The definition of~$\theta^\epsilon$ in~\eqref{e:thetaDef} now implies
  \begin{equation}\label{e:NThetaEps}
    N = \theta^\epsilon(x) = \abs{s}
    \,.
  \end{equation}
  Using Lemma~\ref{l:TstarLower} we obtain
  \begin{equation}
    f_{N+1} \geq c \beta \one_{Q(J(k,s))} \bm F_*
    \,,
  \end{equation}
  following which Lemma~\ref{l:Zprime-lower} yields
  \begin{equation}
    f_{N+2} \geq c \beta \one_{Q(J(k,s))}
    \,.
  \end{equation}
  Since
  \begin{equation}
    J(k,s )
      = \floor{ \varphi^N ( \varphi(x) ) }
      = \floor{ X^0_{1+\theta^\epsilon}(x) }
    \,,
  \end{equation}
  this implies~\eqref{e:pZ1-lower} as claimed.
\end{proof}

Lemma~\ref{l:p2z-doeblin} follows immediately from Lemma~\ref{l:p1z-doeblin}, and we conclude this section by presenting the proof.
\begin{proof}[Proof of Lemma~\ref{l:p2z-doeblin}]
  Using Lemma~\ref{l:p1z-doeblin} and the Markov property we see
  \begin{align}
    p_2^{Z^\epsilon}(x, y)
      &= \int_{\R^d} p_1^{Z^\epsilon}(x, x') p_1^{Z^\epsilon}(x', y) \, dx'
      \geq c \int_{Q(X^0_{1 + \theta^\epsilon}(x))} p_1^{Z^\epsilon}(x', y) \, dx'
    \\
      & \geq c^2 \int_{Q(X^0_{1 + \theta^\epsilon}(x))}
	\one_{Q(X^0_{1 + \theta^\epsilon}(x')} (y) \, dx'
      = c^2 \check w^\epsilon(x, )
  \end{align}
  where~$\check w^\epsilon$ is defined in~\eqref{e:wEpCheckDef}.
\end{proof}

\subsection{Bounding the effective diffusivity (Lemma \ref{l:DwCheck})}

Lemma~\ref{l:DwCheck} follows immediately from the Bernoulli structure of~$\varphi$, and we present the proof here.

\begin{proof}[Proof of Lemma~\ref{l:DwCheck}]
  Let~$x \in Q_0$ be arbitrary, $U \sim \unif(Q_0)$ and define
  \begin{equation}
    K_n = \floor{X^0_n(U)}
    \,.
  \end{equation}
  The Bernoulli structure of~$\varphi$ ensures that~$K_n$ is a random walk on~$\Z^d$ with i.i.d.\ increments.
  To see this, define the filtration~$\set{\mathcal F_n}$ by
  \begin{equation}
    \mathcal F_n = \set{ U \in \mathcal C_{0, s} \st \abs{s} \leq n }
    \,.
  \end{equation}
  For any event~$A \in \mathcal F_n$, item~\ref{a:int-cube} in Assumption~\ref{a:phi} will ensure that the density of~$\one_{A} X^0_n(U)$ is constant on each of the cubes~$\set{Q_m}_{m \in \Z^d}$.
  Since
  \begin{align}
    \P( K_{n+1} - K_n = i \given X^0_n(U) \in Q_k )
      = \P( \varphi(V) - k = i \given V \sim \unif(Q_k) )
      = \abs{E_i}
      \,,
  \end{align}
  the process~$K$ must have i.i.d.\ increments.

  Now let~$\mathcal S_\epsilon$ be as in~\eqref{e:Sepsilon}, and using~\eqref{e:thetaDef} (as we did in~\eqref{e:NThetaEps}) we obtain
  \begin{equation}
    \set{\theta^\epsilon(U) = n}
      = \bigcup \set{ U \in \mathcal C_{0, s} \st \mathcal C_{0, s} \in \mathcal S_\epsilon\,,~ \abs{s} = n}
      \in \mathcal F_n
      \,.
  \end{equation}
  This implies~$\tau = 1 + \theta^\epsilon(U)$ is a stopping time for the filtration~$\mathcal F_n$.

  Thus, for any~$u, v \in \R^d$ we have
  \begin{align}
    u \cdot \mathcal D_{\check w^\epsilon(x)} v 
      &= \cov( u \cdot \floor{K_\tau}, v \cdot \floor{K_\tau} )
      = \E \tau \cov( u \cdot \floor{K_1}, v \cdot \floor{K_1} )
    \\
      &= (1 + {\bar \theta}^\epsilon )
	u \cdot \mathcal D_{w^0} v\,,
  \end{align}
  proving~\eqref{e:DwCheck1}.
\end{proof}

\bibliographystyle{halpha-abbrv}
\bibliography{preprints,gautam-refs1,gautam-refs2}

\newcommand{\etalchar}[1]{$^{#1}$}
\begin{thebibliography}{MHSW22}
\expandafter\ifx\csname url\endcsname\relax
  \def\url#1{\texttt{#1}}\fi
\expandafter\ifx\csname doi\endcsname\relax
  \def\doi#1{\burlalt{doi:#1}{http://dx.doi.org/#1}}\fi
\expandafter\ifx\csname urlprefix\endcsname\relax\def\urlprefix{URL }\fi
\expandafter\ifx\csname href\endcsname\relax
  \def\href#1#2{#2}\fi
\expandafter\ifx\csname burlalt\endcsname\relax
  \def\burlalt#1#2{\href{#2}{#1}}\fi

\bibitem[Bak11]{Bakhtin11}
Y.~Bakhtin.
\newblock Noisy heteroclinic networks.
\newblock {\em Probab. Theory Related Fields}, 150(1-2):1--42, 2011.
\newblock \doi{10.1007/s00440-010-0264-0}.

\bibitem[BBPS19]{BedrossianBlumenthalEA19}
J.~Bedrossian, A.~Blumenthal, and S.~Punshon-Smith.
\newblock Almost-sure exponential mixing of passive scalars by the stochastic
  navier-stokes equations, 2019,
  \burlalt{1905.03869}{http://arxiv.org/abs/1905.03869}.

\bibitem[BCVV95]{BiferaleCrisantiEA95}
L.~Biferale, A.~Crisanti, M.~Vergassola, and A.~Vulpiani.
\newblock Eddy diffusivities in scalar transport.
\newblock {\em Physics of Fluids}, 7(11):2725--2734, 1995.
\newblock \doi{10.1063/1.868651}.

\bibitem[BCZG23]{BlumenthalCotiZelatiEA23}
A.~Blumenthal, M.~Coti~Zelati, and R.~S. Gvalani.
\newblock Exponential mixing for random dynamical systems and an example of
  {P}ierrehumbert.
\newblock {\em Ann. Probab.}, 51(4):1559--1601, 2023.
\newblock \doi{10.1214/23-aop1627}.

\bibitem[BLP78]{BensoussanLionsEA78}
A.~Bensoussan, J.-L. Lions, and G.~Papanicolaou.
\newblock {\em Asymptotic analysis for periodic structures}, volume~5 of {\em
  Studies in Mathematics and its Applications}.
\newblock North-Holland Publishing Co., Amsterdam-New York, 1978.

\bibitem[CFIN23]{ChristieFengEA23}
A.~Christie, Y.~Feng, G.~Iyer, and A.~Novikov.
\newblock Speeding up {L}angevin dynamics by mixing, 2023,
  \burlalt{2303.18168}{http://arxiv.org/abs/2303.18168}.

\bibitem[CKRZ08]{ConstantinKiselevEA08}
P.~Constantin, A.~Kiselev, L.~Ryzhik, and A.~Zlato{\v{s}}.
\newblock Diffusion and mixing in fluid flow.
\newblock {\em Ann. of Math. (2)}, 168(2):643--674, 2008.
\newblock \doi{10.4007/annals.2008.168.643}.

\bibitem[CS89]{ChildressSoward89}
S.~Childress and A.~M. Soward.
\newblock Scalar transport and alpha-effect for a family of cat's-eye flows.
\newblock {\em J. Fluid Mech.}, 205:99--133, 1989.
\newblock \doi{10.1017/S0022112089001965}.

\bibitem[CZDE20]{CotiZelatiDelgadinoEA20}
M.~Coti~Zelati, M.~G. Delgadino, and T.~M. Elgindi.
\newblock On the relation between enhanced dissipation timescales and mixing
  rates.
\newblock {\em Comm. Pure Appl. Math.}, 73(6):1205--1244, 2020.
\newblock \doi{10.1002/cpa.21831}.

\bibitem[Dol98]{Dolgopyat98}
D.~Dolgopyat.
\newblock On decay of correlations in {A}nosov flows.
\newblock {\em Ann. of Math. (2)}, 147(2):357--390, 1998.
\newblock \doi{10.2307/121012}.

\bibitem[ELM23]{ElgindiLissEA23}
T.~M. Elgindi, K.~Liss, and J.~C. Mattingly.
\newblock Optimal enhanced dissipation and mixing for a time-periodic,
  lipschitz velocity field on $\mathbb{T}^2$, 2023,
  \burlalt{2304.05374}{http://arxiv.org/abs/2304.05374}.

\bibitem[EZ19]{ElgindiZlatos19}
T.~M. Elgindi and A.~Zlato\v{s}.
\newblock Universal mixers in all dimensions.
\newblock {\em Adv. Math.}, 356:106807, 33, 2019.
\newblock \doi{10.1016/j.aim.2019.106807}.

\bibitem[FI19]{FengIyer19}
Y.~Feng and G.~Iyer.
\newblock Dissipation enhancement by mixing.
\newblock {\em Nonlinearity}, 32(5):1810--1851, 2019.
\newblock \doi{10.1088/1361-6544/ab0e56}.

\bibitem[FNW04]{FannjiangNonnenmacherEA04}
A.~Fannjiang, S.~Nonnenmacher, and L.~{Wo\l owski}.
\newblock Dissipation time and decay of correlations.
\newblock {\em Nonlinearity}, 17(4):1481--1508, 2004.
\newblock \doi{10.1088/0951-7715/17/4/018}.

\bibitem[FNW06]{FannjiangNonnenmacherEA06}
A.~Fannjiang, S.~Nonnenmacher, and L.~{Wo\l owski}.
\newblock Relaxation time of quantized toral maps.
\newblock {\em Ann. Henri Poincar\'e}, 7(1):161--198, 2006.
\newblock \doi{10.1007/s00023-005-0246-4}.

\bibitem[FP94]{FannjiangPapanicolaou94}
A.~Fannjiang and G.~Papanicolaou.
\newblock Convection enhanced diffusion for periodic flows.
\newblock {\em SIAM J. Appl. Math.}, 54(2):333--408, 1994.
\newblock \doi{10.1137/S0036139992236785}.

\bibitem[FP97]{FannjiangPapanicolaou97}
A.~Fannjiang and G.~Papanicolaou.
\newblock Convection-enhanced diffusion for random flows.
\newblock {\em J. Statist. Phys.}, 88(5-6):1033--1076, 1997.
\newblock \doi{10.1007/BF02732425}.

\bibitem[Fre64]{Freidlin64}
M.~I. Fre{\u\i}dlin.
\newblock The {D}irichlet problem for an equation with periodic coefficients
  depending on a small parameter.
\newblock {\em Teor. Verojatnost. i Primenen.}, 9:133--139, 1964.

\bibitem[FW03]{FannjiangWoowski03}
A.~Fannjiang and L.~{Wo\l owski}.
\newblock Noise induced dissipation in {L}ebesgue-measure preserving maps on
  {$d$}-dimensional torus.
\newblock {\em J. Statist. Phys.}, 113(1-2):335--378, 2003.
\newblock \doi{10.1023/A:1025787124437}.

\bibitem[FW12]{FreidlinWentzell12}
M.~I. Freidlin and A.~D. Wentzell.
\newblock {\em Random perturbations of dynamical systems}, volume 260 of {\em
  Grundlehren der Mathematischen Wissenschaften [Fundamental Principles of
  Mathematical Sciences]}.
\newblock Springer, Heidelberg, third edition, 2012.
\newblock \doi{10.1007/978-3-642-25847-3}.
\newblock Translated from the 1979 Russian original by Joseph Sz{\"u}cs.

\bibitem[Hei03]{Heinze03}
S.~Heinze.
\newblock Diffusion-advection in cellular flows with large {P}eclet numbers.
\newblock {\em Arch. Ration. Mech. Anal.}, 168(4):329--342, 2003.
\newblock \doi{10.1007/s00205-003-0256-7}.

\bibitem[HIK{\etalchar{+}}18]{HairerIyerEA18}
M.~Hairer, G.~Iyer, L.~Koralov, A.~Novikov, and Z.~Pajor-Gyulai.
\newblock A fractional kinetic process describing the intermediate time
  behaviour of cellular flows.
\newblock {\em Ann. Probab.}, 46(2):897--955, 2018.
\newblock \doi{10.1214/17-AOP1196}.

\bibitem[HKPG16]{HairerKoralovEA16}
M.~Hairer, L.~Koralov, and Z.~Pajor-Gyulai.
\newblock From averaging to homogenization in cellular flows---an exact
  description of the transition.
\newblock {\em Ann. Inst. Henri Poincar\'e Probab. Stat.}, 52(4):1592--1613,
  2016.
\newblock \doi{10.1214/15-AIHP690}.

\bibitem[ILN24]{IyerLuEA24}
G.~Iyer, E.~Lu, and J.~Nolen.
\newblock Using {B}ernoulli maps to accelerate mixing of a random walk on the
  torus.
\newblock {\em Quart. Appl. Math.}, 82(2):359--390, 2024.
\newblock \doi{10.1090/qam/1668}.

\bibitem[KLO12]{KomorowskiLandimEA12}
T.~Komorowski, C.~Landim, and S.~Olla.
\newblock {\em Fluctuations in {M}arkov processes}, volume 345 of {\em
  Grundlehren der mathematischen Wissenschaften [Fundamental Principles of
  Mathematical Sciences]}.
\newblock Springer, Heidelberg, 2012.
\newblock \doi{10.1007/978-3-642-29880-6}.

\bibitem[KLX22]{KaoLiuEA22}
C.~Kao, Y.-Y. Liu, and J.~Xin.
\newblock A semi-{L}agrangian computation of front speeds of {G}-equation in
  {ABC} and {K}olmogorov flows with estimation via ballistic orbits.
\newblock {\em Multiscale Model. Simul.}, 20(1):107--117, 2022.
\newblock \doi{10.1137/20M1387699}.

\bibitem[Kor04]{Koralov04}
L.~Koralov.
\newblock Random perturbations of 2-dimensional {H}amiltonian flows.
\newblock {\em Probab. Theory Related Fields}, 129(1):37--62, 2004.
\newblock \doi{10.1007/s00440-003-0320-0}.

\bibitem[KV86]{KipnisVaradhan86}
C.~Kipnis and S.~R.~S. Varadhan.
\newblock Central limit theorem for additive functionals of reversible {M}arkov
  processes and applications to simple exclusions.
\newblock {\em Comm. Math. Phys.}, 104(1):1--19, 1986.
\newblock \urlprefix\url{http://projecteuclid.org/euclid.cmp/1104114929}.

\bibitem[LP17]{LevinPeres17}
D.~A. Levin and Y.~Peres.
\newblock {\em Markov chains and mixing times}.
\newblock American Mathematical Society, Providence, RI, 2017.
\newblock \doi{10.1090/mbk/107}.

\bibitem[LWXZ22]{LyuWangEA22}
J.~Lyu, Z.~Wang, J.~Xin, and Z.~Zhang.
\newblock A convergent interacting particle method and computation of {KPP}
  front speeds in chaotic flows.
\newblock {\em SIAM J. Numer. Anal.}, 60(3):1136--1167, 2022.
\newblock \doi{10.1137/21M1410786}.

\bibitem[LXY17]{LyuXinEA17}
J.~Lyu, J.~Xin, and Y.~Yu.
\newblock Computing residual diffusivity by adaptive basis learning via
  spectral method.
\newblock {\em Numer. Math. Theory Methods Appl.}, 10(2):351--372, 2017.
\newblock \doi{10.4208/nmtma.2017.s08}.

\bibitem[LXY18]{LyuXinEA18}
J.~Lyu, J.~Xin, and Y.~Yu.
\newblock Residual diffusivity in elephant random walk models with stops.
\newblock {\em Commun. Math. Sci.}, 16(7):2033--2045, 2018.
\newblock \doi{10.4310/CMS.2018.v16.n7.a12}.

\bibitem[MCX{\etalchar{+}}17]{MurphyCherkaevEA17}
N.~B. Murphy, E.~Cherkaev, J.~Xin, J.~Zhu, and K.~M. Golden.
\newblock Spectral analysis and computation of effective diffusivities in
  space-time periodic incompressible flows.
\newblock {\em Ann. Math. Sci. Appl.}, 2(1):3--66, 2017.
\newblock \doi{10.4310/AMSA.2017.v2.n1.a1}.

\bibitem[MCZ{\etalchar{+}}20]{MurphyCherkaevEA20}
N.~B. Murphy, E.~Cherkaev, J.~Zhu, J.~Xin, and K.~M. Golden.
\newblock Spectral analysis and computation for homogenization of advection
  diffusion processes in steady flows.
\newblock {\em J. Math. Phys.}, 61(1):013102, 34, 2020.
\newblock \doi{10.1063/1.5127457}.

\bibitem[MHSW22]{MyersHillSturmanEA22}
J.~Myers~Hill, R.~Sturman, and M.~C. Wilson.
\newblock Exponential mixing by orthogonal non-monotonic shears.
\newblock {\em Physica D: Nonlinear Phenomena}, 434:133224, 2022.
\newblock \doi{10.1016/j.physd.2022.133224}.

\bibitem[MK99]{MajdaKramer99}
A.~J. Majda and P.~R. Kramer.
\newblock Simplified models for turbulent diffusion: {T}heory, numerical
  modelling, and physical phenomena.
\newblock {\em Phys. Rep.}, 314(4-5):237--574, 1999.
\newblock \doi{10.1016/S0370-1573(98)00083-0}.

\bibitem[MT06]{MontenegroTetali06}
R.~Montenegro and P.~Tetali.
\newblock Mathematical aspects of mixing times in {M}arkov chains.
\newblock {\em Found. Trends Theor. Comput. Sci.}, 1(3):x+121, 2006.
\newblock \doi{10.1561/0400000003}.

\bibitem[NPR05]{NovikovPapanicolaouEA05}
A.~Novikov, G.~Papanicolaou, and L.~Ryzhik.
\newblock Boundary layers for cellular flows at high {P}\'eclet numbers.
\newblock {\em Comm. Pure Appl. Math.}, 58(7):867--922, 2005.
\newblock \doi{10.1002/cpa.20058}.

\bibitem[OTD21]{OakleyThiffeaultEA21}
B.~W. Oakley, J.-L. Thiffeault, and C.~R. Doering.
\newblock On mix-norms and the rate of decay of correlations.
\newblock {\em Nonlinearity}, 34(6):3762--3782, 2021.
\newblock \doi{10.1088/1361-6544/abdbbd}.

\bibitem[Pie94]{Pierrehumbert94}
R.~T. Pierrehumbert.
\newblock Tracer microstructure in the large-eddy dominated regime.
\newblock {\em Chaos, Solitons \& Fractals}, 4(6):1091--1110, 1994.
\newblock \doi{10.1016/0960-0779(94)90139-2}.

\bibitem[PS08]{PavliotisStuart08}
G.~A. Pavliotis and A.~M. Stuart.
\newblock {\em Multiscale methods -- Averaging and homogenization}, volume~53
  of {\em Texts in Applied Mathematics}.
\newblock Springer, New York, 2008.

\bibitem[RZ07]{RyzhikZlatos07}
L.~Ryzhik and A.~Zlato\v{s}.
\newblock K{PP} pulsating front speed-up by flows.
\newblock {\em Commun. Math. Sci.}, 5(3):575--593, 2007.
\newblock \urlprefix\url{http://projecteuclid.org/euclid.cms/1188405669}.

\bibitem[SOW06]{SturmanOttinoEA06}
R.~Sturman, J.~M. Ottino, and S.~Wiggins.
\newblock {\em The mathematical foundations of mixing}, volume~22 of {\em
  Cambridge Monographs on Applied and Computational Mathematics}.
\newblock Cambridge University Press, Cambridge, 2006.
\newblock \doi{10.1017/CBO9780511618116}.

\bibitem[Tay21]{Taylor21}
G.~I. Taylor.
\newblock Diffusion by {C}ontinuous {M}ovements.
\newblock {\em Proc. London Math. Soc. (2)}, 20(3):196--212, 1921.
\newblock \doi{10.1112/plms/s2-20.1.196}.

\bibitem[Tay53]{Taylor53}
G.~Taylor.
\newblock Dispersion of soluble matter in solvent flowing slowly through a
  tube.
\newblock {\em Proc. R. Soc. Lond. A}, 219(1137):186--203, 1953.
\newblock \doi{10.1098/rspa.1953.0139}.

\bibitem[TC03]{ThiffeaultChildress03}
J.-L. Thiffeault and S.~Childress.
\newblock Chaotic mixing in a torus map.
\newblock {\em Chaos}, 13(2):502--507, 2003.
\newblock \doi{10.1063/1.1568833}.

\bibitem[TZ24]{TaoZworski24}
Z.~Tao and M.~Zworski.
\newblock Optimal enhanced dissipation for contact anosov flows, 2024,
  \burlalt{2311.01000}{http://arxiv.org/abs/2311.01000}.
\newblock \urlprefix\url{https://arxiv.org/abs/2311.01000}.

\bibitem[WXZ21]{WangXinEA21}
Z.~Wang, J.~Xin, and Z.~Zhang.
\newblock Sharp error estimates on a stochastic structure-preserving scheme in
  computing effective diffusivity of 3{D} chaotic flows.
\newblock {\em Multiscale Model. Simul.}, 19(3):1167--1189, 2021.
\newblock \doi{10.1137/19M1275516}.

\bibitem[WXZ22]{WangXinEA22}
Z.~Wang, J.~Xin, and Z.~Zhang.
\newblock Computing effective diffusivities in 3{D} time-dependent chaotic
  flows with a convergent {L}agrangian numerical method.
\newblock {\em ESAIM Math. Model. Numer. Anal.}, 56(5):1521--1544, 2022.
\newblock \doi{10.1051/m2an/2022049}.

\bibitem[You88]{Young88}
W.~R. Young.
\newblock Arrested shear dispersion and other models of anomalous diffusion.
\newblock {\em J. Fluid Mech.}, 193:129--149, Aug 1988.
\newblock \doi{10.1017/S0022112088002083}.

\bibitem[YPP89]{YoungPumirEA89}
W.~Young, A.~Pumir, and Y.~Pomeau.
\newblock Anomalous diffusion of tracer in convection rolls.
\newblock {\em Phys. Fluids A}, 1(3):462--469, 1989.
\newblock \doi{10.1063/1.857415}.

\bibitem[Zas02]{Zaslavsky02}
G.~M. Zaslavsky.
\newblock Chaos, fractional kinetics, and anomalous transport.
\newblock {\em Phys. Rep.}, 371(6):461--580, 2002.
\newblock \doi{10.1016/S0370-1573(02)00331-9}.

\bibitem[ZSW93]{ZaslavskyStevensEA93}
G.~M. Zaslavsky, D.~Stevens, and H.~Weitzner.
\newblock Self-similar transport in incomplete chaos.
\newblock {\em Phys. Rev. E}, 48:1683--1694, Sep 1993.
\newblock \doi{10.1103/PhysRevE.48.1683}.

\end{thebibliography}
\end{document}